\newcommand{\red}{\textcolor{red}}
\newcommand{\green}{\textcolor{OliveGreen}}
\newcommand{\blue}{\textcolor{blue}}
\definecolor{codegreen}{rgb}{0,0.6,0}
\definecolor{codegray}{rgb}{0.5,0.5,0.5}
\definecolor{codepurple}{rgb}{0.58,0,0.82}
\definecolor{backcolour}{rgb}{0.95,0.95,0.92}
\lstdefinestyle{mystyle}{
	backgroundcolor=\color{backcolour},   
	commentstyle=\color{codegreen},
	keywordstyle=\color{magenta},
	numberstyle=\tiny\color{codegray},
	stringstyle=\color{codepurple},
	basicstyle=\ttfamily\footnotesize,
	breakatwhitespace=false,         
	breaklines=true,                 
	captionpos=b,                    
	keepspaces=true,                 
	numbers=left,                    
	numbersep=5pt,                  
	showspaces=false,                
	showstringspaces=false,
	showtabs=false,                  
	tabsize=2
}
\theoremstyle{plain}
\newtheorem{thm}{Theorem}[section]
\newtheorem{prop}[thm]{Proposition}
\newtheorem{lemma}[thm]{Lemma}
\theoremstyle{definition}
\newtheorem{defin}[thm]{Definition}
\newtheorem{ex}[thm]{Example}
\newtheorem{rmk}[thm]{Remark}
\newtheorem{conj}[thm]{Conjecture}
\begin{document}

\title[Curvature sharpness and flow in weighted graphs -- Implementation ]{Bakry-\'Emery curvature sharpness and curvature flow in finite weighted graphs. II. Implementation}
	
    \author[Cushing]{David Cushing}
    \address{School of Mathematics, Statistics and Physics, Newcastle University, Newcastle upon Tyne, Great Britain}
    \email{David.Cushing1024@gmail.com}
    \author[Kamtue]{Supanat Kamtue}
    \address{Yau Mathematical Sciences Center, Tsinghua University, Beijing, China}
    \email{skamtue@tsinghua.edu.cn}
    \author[Liu]{Shiping Liu}
    \address{School of Mathematical Sciences and CAS Wu Wen-Tsun Key Laboratory of Mathematics, University of Science and Technology of China, Hefei, China}
    \email{spliu@ustc.edu.cn}
    \author[M\"unch]{Florentin M\"unch}
    \address{Max Planck Institute for Mathematics in the Sciences, Leipzig, Germany} 
    \email{muench@mis.mpg.de}
    \author[Peyerimhoff]{Norbert Peyerimhoff}
    \address{Department of Mathematical Sciences, Durham University, Durham, Great Britain}
    \email{norbert.peyerimhoff@durham.ac.uk}
    \author[Snodgrass]{Ben Snodgrass}
    \address{Department of Mathematical Sciences, Durham University, DH1 3LE, Great Britain}
    \email{hugo.b.snodgrass@durham.ac.uk}
	\date{\today}
	
	\begin{abstract}
		In this second part of a sequence of two papers, we discuss the implementation of a curvature flow on weighted graphs based on the Bakry-\'Emery calculus. This flow can be adapted to preserve the Markovian property and its limits as time goes to infinity turn out to be curvature sharp weighted graphs. 
		After reviewing some of the main results of the first paper concerned with the theoretical aspects, we present various examples (random graphs, paths, cycles, complete graphs, wedge sums and Cartesian products of complete graphs, hypercubes) and exhibit further properties of this flow. One particular aspect in our investigations is asymptotic stability and instability of curvature flow equilibria. The paper ends with a description of the available Python functions and routines available in the ancillary file. We hope that the explanations of the Python implementation via examples will help users to carry out their own curvature flow experiments.
	\end{abstract}
	
\maketitle
	
\tableofcontents
	
\section{Introduction}

This paper is concerned with computational aspects of a curvature flow on weighted graphs based on the Bakry-\'Emery calculus. This curvature flow was intoduced in our first paper \cite{CKLMPS-22}. 

A \emph{weighted graph} in this paper is a finite simple mixed combinatorial graph $G=(V,E)$ with vertex set $V$ and edge set $E = E^1 \cup E^2$ of one- and two-sided edges, together with a weighting scheme of transition rates $p_{xy} \ge 0$ for $x,y \in V$ (which can be represened by a generally non-symmetric matrix $P$ after an enumeration of the vertices). Transition rates $p_{xy}$ can only be positive if $x=y$ or if there is a one- oder two-sided edge from $x$ to $y$.
One-sided edges are denoted by ordered pairs $(x,y) \in E^1 \subset V^2$, and two-sided edges are denoted by sets $\{x,y\} \in E^2$. One- or two-sided edges $(x,y) \in E^1$ or $\{x,y\} \in E^2$ with vanishing transition rates $p_{xy}=0$ are called \emph{degenerate} and a
weighted graph $(G,P)$ is called \emph{non-degenerate} if it does not have degenerate edges.  
A weighted graph $(G,P)$ is called \emph{Markovian}, if we have $\sum_{y \in V} p_{xy} = 1$ for all $x \in V$. In this case,
$P$ is a stochastic matrix and
we can view the transition rates $p_{xy}$ as transition probabilities of a lazy random walk (with non-zero laziness if there exists a vertex $x \in V$ with $p_{xx} > 0$). Our curvature flow does not affect the underlying combinatorial graph $G$, but it changes the weighting scheme. We will focus on a version of our flow which preserves the Markovian property. In other words, starting with an initial Markovian weighted graph $(G,P_0)$, this flow will provide a family of Markovian weighting schemes $\{ P(t) \}_{t \ge 0}$ with $P(0) = P_0$, depending smoothly on the continuous time parameter $t$.
	
Before we introduce our curvature flow, we need to briefly discuss the relevant Bakry-\'Emery curvature background. This curvature notion is based on the weighted Laplacian $\Delta = \Delta_P$, acting on functions $f: V \to \mathbb{R}$ as follows:
$$ \Delta_P f(x) = \sum_{y \in V} p_{xy}(f(y)-f(x)). $$
The Laplacian gives rise to the following symmetric bilinear ``carr{\'e} du champ operators'' $\Gamma$ and $\Gamma_2$:
\begin{eqnarray*}
    2 \Gamma(f,g) &=& \Delta(fg) - f \Delta g - g \Delta f, \\
    2 \Gamma_2(f,g) &=& \Delta \Gamma(f,g) - \Gamma(f,\Delta g) - \Gamma(g,\Delta f).
\end{eqnarray*}

\subsection{Bakry-\'Emery curvature and curvature sharpness}
Bakry-Emery curvature depends on a dimension parameter $N$ and is well-defined at every vertex $x \in V$ which is \emph{not isolated}, that is, there exists another vertex $y \in V$ with $p_{xy} > 0$. For isolated vertices, there is some ambiguity how to define its curvature, and we decided to assign to such a vertex the curvature value $0$.\footnote{Another natural choice of curvature for an isolated vertex $x \in V$ would be $K_N(x) = \infty$ for all $N \in (0,\infty]$. An argument for that choice is that an isolated vertex can be viewed as a discrete analogue of a limit of round spheres with radii shrinking to $0$, whose curvatures would diverge to infinity.} The definition of Bakry-\'Emery curvature reads as follows:

\begin{defin}[Bakry-\'Emery curvature]
		The \emph{Bakry-\'Emery curvature} of a \emph{non-isolated} vertex $x \in V$ for a fixed dimension $N \in (0,\infty]$ is the supremum of all values $K \in \mathbb{R}$, satisfying the \emph{curvature-dimension inequality}
		\begin{equation} \label{eq:cd-ineq} 
			\Gamma_2(f)(x) \ge \frac{1}{N} (\Delta f(x))^2 + K\, \Gamma(f)(x) 
		\end{equation}
		for all functions $f: V \to \mathbb{R}$. We use the simplified notation $\Gamma(f) = \Gamma(f,f)$ and $\Gamma_2(f) = \Gamma_2(f,f)$.
		We denote the curvature at $x \in V$ by $K_N(x) = K_{P,N}(x)$. If $x \in V$ is isolated, that is, we have $p_{xy} = 0$ for all $y \in V \setminus \{x\}$, we set $K_N(x) = K_{P,N}(x)= 0$ for all $N \in (0,\infty]$.
\end{defin} 

This curvature notion is motivated by Bochner's identity (see, e.g., \cite[Prop. 4.15]{GHL-04}), a fundamental pointwise formula in the smooth setting of $n$-dimensional Riemannian manifolds involving gradients, Laplacians, Hessians and Ricci curvature. We refer readers to \cite{BE-84, Elw-91, Schm-99, LY-10} for the Bakry-\'Emery calculus and its application in the graph theoretic setting.

By the definition, the inequality
\begin{equation} \label{eq:cd-ineq-f} 
\Gamma_2(f)(x) \ge \frac{1}{N} (\Delta f(x))^2 + K_N(x)\, \Gamma(f)(x) 
\end{equation}
holds for every function $f$, and therefore also for the combinatorial distance function $d(x,\cdot)$. Here, $d(x,y)$ is the length of a shortest directed path from $x$ to $y$ (if there is no such path, we set $d(x,y) = \infty$). If we have equality at $x$ in \eqref{eq:cd-ineq-f} for this particular function $f=d(x,\cdot)$, we say that the vertex $x \in V$ is $N$-curvature sharp. Curvature sharpness will be particularly important in our considerations. Curvature sharpness was originally introduced in \cite[Definition 1.4]{CLP-20}. The (equivalent) definition given in this paper is inspired by \cite[Proof of Theorem 1.2]{KKRT-16}. For more details about relations between different curvature sharpness definitions see Section 3 of our first paper \cite{CKLMPS-22}.

\begin{defin}[Curvature sharpness] Let $(G,P)$ be a weighted graph and $N \in (0,\infty]$. A vertex $x \in V$ is called \emph{$N$-curvature sharp} 
if we have
\begin{equation} \label{eq:cd-up-bd} 
\Gamma_2(f)(x) = \frac{1}{N} (\Delta f(x))^2 + K_N(x)\, \Gamma(f)(x) 
\end{equation}
for the distance function $f = d(x, \cdot)$. Moreover, a vertex $x \in V$ is \emph{curvature sharp} if it is curvature sharp for some dimension $N \in (0,\infty]$. A weighted graph $(G,P)$ is called \emph{curvature sharp}, if every vertex of $G$ is curvature sharp.
\end{defin}  	
	
Note that each function $f: V \to \mathbb{R}$ with $\Gamma(f)(x) \neq 0$ gives rise to an upper curvature bound $K_{P,N}^{f}(x)$ via the inequality \eqref{eq:cd-ineq-f}. Namely, we have
\begin{equation} \label{eq:KNf-bd}
K_N(x) \le K_{P,N}^{f}(x):= \frac{1}{\Gamma(f)(x)} \left( \Gamma_2(f)(x) - \frac{1}{N}(\Delta f(x))^2  \right).
\end{equation}
A vertex $x \in V$ is therefore $N$-curvature sharp if its Bakry-\'Emery curvature $K_N(x)$ agrees with the specific upper curvature bound $K_{P,N}^{d(x,\cdot)}(x)$. We also like to mention the following monotonicity property of curvature sharpness: If $x \in V$ is $N$-curvature sharp that this vertex is also curvature sharp for any dimension $\le N$ (see \cite[Prop. 3.1]{CKLMPS-22}).

In the next subsection, we present an important reformulation of Bakry-\'Emery curvature at $x \in V$ using a specific matrix $Q(x)$, which will be important in the definition of the curvature flow.

\subsection{Reformulation of curvature via a Schur complement}

The combinatorial distance function allows us to define distance spheres and distance balls,
\begin{eqnarray*} 
S_r(x) &=& \{ z \in V: d(x,z) = r \}, \\
B_r(x) &=& \{ z \in V: d(x,z) \le r \}. \end{eqnarray*}
Let $x \in V$ be a non-isolated vertex. It turns out that the Bakry-\'Emery curvature $K_N(x)$ is determined locally, that is, can be derived solely from the information about the $2$-ball $$B_2(x) = \{x\} \cup S_1(x) \cup S_2(x) . $$ 
More precisely, denoting $S_1(x) = \{y_1,\dots,y_m\}$ and $S_2(x) = \{z_1,\dots,z_n\}$, there exist a column vector $\Delta(x)$ and a symmetric matrix $\Gamma(x)$ of size $m$ and a symmetric matrix $\Gamma_2(x)$ of size $m+n$ such that, for functions $f,g: V \to \mathbb{R}$ with $f(x) = g(x) = 0$,
\begin{eqnarray*}
  \Delta f(x) &=& \Delta(x)^\top \vec{f}_m, \\
  \Gamma(f,g)(x) &=& \vec{f}_m^\top \Gamma(x) \vec{g}_m, \\
  \Gamma_2(f,g)(x) &=& \vec{f}_{m+n}^\top \Gamma_2(x) \vec{g}_{m+n},
\end{eqnarray*}
where $\vec{f}_m = (f(y_1),\dots,f(y_m))^\top$ and $$ \vec{f}_{m+n} = (f(y_1),\dots,f(y_m),f(z_1),\dots,f(z_n))^\top, $$
and $\vec{g}_n, \vec{g}_{m+n}$, accordingly. Using the $(n+m)$-block decomposition 
$$ \Gamma_2(x) = \begin{pmatrix} \Gamma_2(x)_{S_1} & \Gamma_2(x)_{S_1,S_2} \\
\Gamma_2(x)_{S_2,S_1} & \Gamma_2(x)_{S_2} \end{pmatrix} $$
and employing the Schur complement
$$ Q(x) = \Gamma_2(x)_{S_1} - \Gamma_2(x)_{S_1,S_2} \Gamma_2(x)_{S_2}^\dagger \Gamma_2(x)_{S_2,S_1} $$
for matrices $\Gamma_2(x)$ with positive semidefinite
$\Gamma_2(x)_{S_2}$-blocks with $A^\dagger$ the pseudoinverse\footnote{For a given matrix $A \in \mathbb{R}^{N \times M}$, its pseudoinverse $A^\dagger \in \mathbb{R}^{M \times N}$ is defined by the following conditions: $A A^\dagger A = A$, $A^\dagger A A^\dagger = A^\dagger$, and $A A^\dagger \in \mathbb{R}^{N\times N}$ and $A A^\dagger \in \mathbb{R}^{M \times M}$ are both symmetric matrices.} of $A$, we can reformulate Bakry-\'Emery curvature at $x \in V$ for dimension $N \in (0,\infty]$ as follows (see Section 2.4 in \cite{CKLMPS-22}): 
\medskip

\fbox{\parbox{\textwidth}{
Let $x \in V$ be a non-isolated vertex. $K_N(x)$ is then the maximum of all $K \in \mathbb{R}$ such that
$$ Q(x) - \frac{1}{N} \Delta(x)\Delta(x)^\top - K \Gamma(x) \succeq 0, $$
where $A \succeq B$ means that $A-B$ is positive semidefinite. 
}}
\medskip

This curvature translation was motivated originally by the aim to reformulate the computation of Bakry-\'Emery curvature as an eigenvalue problem (see \cite{Sic-20,Sic-21} and \cite{CKLP-22}).

The symmetric matrix $Q(x)$ of a non-isolated vertex $x \in V$ is of size $m$ and is -- in the non-degenerate case --
closely related to another symmetric matrix $A_\infty(x)$ which, in turn, can be viewed as a discrete counterpart of the Ricci curvature tensor at a point $x \in M$ of a Riemannian manifold $(M,g)$ (see formula (1.2) and Section 7 in \cite{CKLP-22}). In the case of a Markovian weighted graph, curvature sharpness at a vertex $x \in V$ can also be alternatively expressed with the help of the matrix $Q(x)$ as follows.

\begin{thm}[see Theorem 1.3 in \cite{CKLMPS-22}] \label{thm:main}
Let $(G,P)$ be a Markovian weighted graph and $x \in V$ be a non-isolated vertex with $S_1(x) = \{y_1,\dots,y_m\}$.
Then the following statements are equivalent:
\begin{itemize}
    \item[(1)] $x$ is curvature sharp,
    \item[(2)] $x$ is curvature sharp for dimension $N=2$,
    \item[(3)] We have
    \begin{equation} \label{eq:curv-sharp-Q}
    Q(x) {\bf{1}}_m = \frac{1}{2} K_{P,\infty}^{d(x,\cdot)}(x) {\bf{p}}_x, 
    \end{equation}
    where ${\bf{p}}_x = (p_{xy_1},\dots,p_{xy_m})^\top$
    and ${\bf{1}}_m$ is the all-one column vector of size $m$.
\end{itemize}
\end{thm}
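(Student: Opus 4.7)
The plan is to translate all three conditions into matrix identities via the Schur complement formulation, and to exploit the Markov identities $\Delta(x) = \mathbf{p}_x$ and $\Gamma(x) = \tfrac{1}{2}\diag(\mathbf{p}_x)$, which give $\Gamma(x)\mathbf{1}_m = \tfrac{1}{2}\mathbf{p}_x$, $\Delta(x)^\top\mathbf{1}_m = 1-p_{xx}$, and for $d = d(x,\cdot)$ the values $\Delta d(x) = 1-p_{xx}$, $\Gamma(d)(x) = (1-p_{xx})/2$. Write $T^N_K := Q(x) - \tfrac{1}{N}\Delta(x)\Delta(x)^\top - K\Gamma(x)$, so $K_N(x)$ is the largest $K$ with $T^N_K \succeq 0$; a direct preliminary consequence is the identity $K_{P,N}^{d(x,\cdot)}(x) = K_{P,\infty}^{d(x,\cdot)}(x) - 2(1-p_{xx})/N$. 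My first move is to reformulate $N$-curvature sharpness as the pair of matrix conditions
(i) $\mathbf{1}_m^\top Q(x)\mathbf{1}_m = \Gamma_2(d)(x)$, and
(ii) $T^N_{K_N(x)}\mathbf{1}_m = 0$,
obtained by collapsing the chain
\[
\Gamma_2(d)(x) \,\ge\, \mathbf{1}_m^\top Q(x)\mathbf{1}_m \,\ge\, \tfrac{1}{N}(\Delta d(x))^2 + K_N(x)\,\Gamma(d)(x)
\]
(first inequality: the Schur complement minimizes over $f|_{S_2}$; second: evaluate $T^N_{K_N(x)} \succeq 0$ at $\mathbf{1}_m$) to a double equality.

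The implication ``$x$ sharp for some $N$ $\Rightarrow$ (3)'' then follows by direct algebra from (ii). Substituting $K_N(x) = K_{P,N}^{d(x,\cdot)}(x)$ together with the formula above yields
\[
Q(x)\mathbf{1}_m = \tfrac{1-p_{xx}}{N}\mathbf{p}_x + \tfrac{K_{P,N}^{d(x,\cdot)}(x)}{2}\mathbf{p}_x = \tfrac{K_{P,\infty}^{d(x,\cdot)}(x)}{2}\mathbf{p}_x,
\]
which is (3); this covers (1)$\Rightarrow$(3) and in particular (2)$\Rightarrow$(3). Conversely, (3) pairs with $\mathbf{1}_m^\top$ to give $\mathbf{1}_m^\top Q(x)\mathbf{1}_m = K_{P,\infty}^{d(x,\cdot)}(x)\,\Gamma(d)(x) = \Gamma_2(d)(x)$, proving (i), and the same algebra run backwards produces $T^N_{K_{P,N}^{d(x,\cdot)}(x)}\mathbf{1}_m = 0$ for every $N$. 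What remains for (3)$\Rightarrow$(1) and (3)$\Rightarrow$(2) is the positive semidefiniteness of $T^N_{K_{P,N}^{d(x,\cdot)}(x)}$ for some $N > 0$ and for $N = 2$, respectively.

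For the PSD step I would use the decomposition $T^N_{K_{P,N}^{d(x,\cdot)}(x)} = \tilde T + \tfrac{1}{N}A$ with $\tilde T := Q(x) - K_{P,\infty}^{d(x,\cdot)}(x)\Gamma(x)$ and $A := 2(1-p_{xx})\Gamma(x) - \Delta(x)\Delta(x)^\top$; here $A \succeq 0$ by Cauchy-Schwarz on the non-negative weights $p_{xy}$ with $A\mathbf{1}_m = 0$, and $\tilde T\mathbf{1}_m = 0$ is precisely (3). The implication (3)$\Rightarrow$(1) is then easy: for $v = \alpha\mathbf{1}_m + w$ with $\Delta(x)^\top w = 0$, the quadratic form reduces to $w^\top\tilde T w + (2(1-p_{xx})/N)\,w^\top\Gamma(x) w$, and the PSD term dominates any negative eigenvalue of $\tilde T$ on $\ker(\Delta(x)^\top)$ once $N$ is chosen small enough, so $T^N_{K_{P,N}^{d(x,\cdot)}(x)} \succeq 0$ and $x$ is $N$-sharp. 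The main obstacle, which I expect to be the technical heart of the theorem, is (3)$\Rightarrow$(2): one needs the sharp bound $\tilde T \succeq -(1-p_{xx})\,\Gamma(x)$ on $\ker(\Delta(x)^\top)$, which is actually attained (both sides agree in star-type local configurations) and hence cannot follow from soft estimates. I would expect the proof in \cite{CKLMPS-22} to supply this via an explicit non-negative decomposition of $T^2_{K_{P,2}^{d(x,\cdot)}(x)}$ coming from the concrete formula for $\Gamma_2$ on a Markov graph. Once that ingredient is in place, (3)$\Rightarrow$(2) follows, and combined with the trivial (2)$\Rightarrow$(1) the chain (1)$\Leftrightarrow$(2)$\Leftrightarrow$(3) is closed.
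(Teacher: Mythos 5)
Your set-up is sound and matches the natural route through the Schur-complement reformulation: in the Markovian case $\Delta(x)=\mathbf{p}_x$, $\Gamma(x)=\tfrac12\diag(\mathbf{p}_x)$, the identity $K_{P,N}^{d(x,\cdot)}(x)=K_{P,\infty}^{d(x,\cdot)}(x)-2(1-p_{xx})/N$ is correct, your characterisation of $N$-curvature sharpness by the two conditions $\mathbf{1}_m^\top Q(x)\mathbf{1}_m=\Gamma_2(d(x,\cdot))(x)$ and $T^N_{K_N(x)}\mathbf{1}_m=0$ is a legitimate collapse of the stated chain (the first inequality of the chain uses the variational property of the Schur complement, which goes slightly beyond the boxed reformulation quoted in this paper but is available in the companion paper), and the computation giving (1),(2)$\Rightarrow$(3), as well as the reduction of (3)$\Rightarrow$(2) to the bound $\tilde T\succeq-(1-p_{xx})\,\Gamma(x)$ on $\ker\Delta(x)^\top$ with $\tilde T:=Q(x)-K_{P,\infty}^{d(x,\cdot)}(x)\,\Gamma(x)$, is correct. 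However, the proposal is not a proof of the theorem: the implication (3)$\Rightarrow$(2), which you yourself identify as the technical heart, is not argued at all but deferred to ``the proof in \cite{CKLMPS-22}''. Note that the present paper also contains no proof (Theorem \ref{thm:main} is quoted verbatim from Part I), so this deferral leaves the decisive estimate missing. Everything you actually establish is the linear algebra surrounding the reformulation, whereas statement (2) is the strongest of the three: the monotonicity of curvature sharpness only propagates to dimensions $\le N$, so (1) does not trivially yield (2), and without the positive semidefiniteness of $Q(x)-\tfrac12\Delta(x)\Delta(x)^\top-K_{P,2}^{d(x,\cdot)}(x)\,\Gamma(x)$ under hypothesis (3) the equivalence is simply not proved.

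Two further points. Your auxiliary small-$N$ argument for (3)$\Rightarrow$(1) requires $\Gamma(x)$ to be positive definite on $\ker\Delta(x)^\top$, i.e.\ $p_{xy_i}>0$ for every $y_i\in S_1(x)$; the theorem does not assume non-degeneracy, and $S_1(x)$ is defined combinatorially, so a neighbour with $p_{xy_i}=0$ produces directions $w$ on which the compensating term $\tfrac{2(1-p_{xx})}{N}\,w^\top\Gamma(x)w$ vanishes and you would additionally need $w^\top Q(x)w\ge 0$ there. This is moot once (3)$\Rightarrow$(2) is in place, since (2)$\Rightarrow$(1) is trivial, but as written neither route to (1) is complete in the degenerate case. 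In summary: correct framework, correctly located crux, but the crux itself -- an explicit nonnegativity argument for $T^2_{K_{P,2}^{d(x,\cdot)}(x)}$ given \eqref{eq:curv-sharp-Q} -- is absent, so the proposal has a genuine gap.
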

	
Note that the term $K_\infty^{d(x,\cdot)}(x)$ in \eqref{eq:curv-sharp-Q} is the upper curvature bound introduced in \eqref{eq:KNf-bd} (in the special case $N = \infty$), that is, 
$$ K_{P,\infty}^{d(x,\cdot)}(x) = \frac{\Gamma_2(d(x,\cdot))(x)}{\Gamma(d(x,\cdot))(x)}. $$ 	
	
\subsection{Curvature flow} 	
	
Let $(G,P_0)$ be a fixed initial weighted graph with $N = |V|$. For every non-isolated vertex $x \in V$, the size of the corresponding symmetric matrix $Q(x)$ agrees with the degree of the vertex $x$, that is, the number of one- and two-sided edges emanating from $x$, and the entries of $Q(x)$ are determined by the transition rates of edges of the $2$-ball $B_2(x)$. Our curvature flow
associates to this initial data a smooth matrix valued function $P: [0,\infty) \to \mathbb{R}^{N \times N}$ with $P(0) = P_0$.  The corresponding symmetric $Q$-matrices at time $t \in [0,\infty)$ depend on the weighting schemes $P(t)$, and we denote them henceforth by $Q_x(t)$ for all $x \in V$. Our curvature flow is now defined as follows.

\begin{defin}[Curvature flow]
Let $(G,P_0)$ be a finite weighted graph. The associated \emph{curvature flow} is given by the following differential equations for all non-isolated vertices $x \in V$ and all $t \ge 0$:
\begin{eqnarray}
p_{xx}'(t) &=& 0, \label{eq:laziness} \\
{\bf{p}}_x'(t) &=& - 4Q_x(t) {\bf{1}}_m + 2 C_x(t) {\bf{p}}_x(t), \label{eq:flowdiffeq}
\end{eqnarray}
where $S_1(x) = \{y_1,\dots,y_m\}$ and 
$$ {\bf{p}}_x(t) = (p_{xy_1}(t),\dots,p_{xy_m}(t))^\top. $$
In the case of an isolated vertex $x \in V$, its curvature flow is given by the simple equation $p_{xx}'(t) = 0$, that is $p_{xx}(t)$ is a constant function in $t$.
\end{defin} 

We note that the curvature flow equation \eqref{eq:laziness} guarantees that the diagonal entries of the weighting scheme do not change. The functions $C_x(t)$ in the curvature flow equation \eqref{eq:flowdiffeq} play the role of a normalisation since, for the choice $C_x \equiv 0$, various transition rates will be unbounded as the time parameter $t \ge 0$ increases. Note that in the smooth case of closed Riemannian manifolds $(M,g_0)$, a suitable normalization leads to volume preservance of $(M,g_t)$ under the Ricci curvature flow. Our aim is to preserve the Markovian property, and it was shown in our first paper that the curvature flow preserves this property if we choose
the normalization functions
\begin{equation} \label{eq:CxMark}
C_x(t) = K_{P(t),\infty}^{d(x,\cdot)}(x).
\end{equation}
Let us give the explicit formulas for the curvature flow equation \eqref{eq:flowdiffeq} for this particular choice of $C_x(t)$, where $y,y',y''$ always represent vertices in $S_1(x)$ (see \cite[formula (66)]{CKLMPS-22}:  
\begin{multline} \label{eq:flowdiffeq-explicit}
   p_{xy}'(t) = \\ p_{xy}(t)\left( -4p_{yx}(t) - 2 \sum_{y' \neq y} p_{yy'}(t) + \frac{4}{D_x} \sum_{y'} p_{xy'}(t)p_{y'x}(t) + \frac{1}{D_x}\sum_{y',y''} p_{xy'}(t)p_{y'y''}(t) - p_{yy}(t) \right) \\+ \underbrace{\sum_{y' \neq y} p_{xy'}(t)p_{y'y}(t)}_{(*)}.
\end{multline}
Here we use $D_x = \sum_{y'} p_{xy}(t) = 1 - p_{xx}(t)$.
note that \eqref{eq:laziness} guarantees that $D_x \le 1$ is independent of the time parameter $t$.

The following theorem collects some fundamental properties of the normalized curvature flow.

\begin{thm}[see Theorem 1.5 and Prop. 1.6 in \cite{CKLMPS-22}] \label{thm:curvflowprops}
   Let $(G,P_0)$ be a Markovian weighted graph. Then the curvature flow $(G,P(t))_{t \ge 0}$ associated to $(G,P_0)$ with normalization \eqref{eq:CxMark} is well defined for all $t \ge 0$ and preserves the Markovian property.
   If $(G,P_0)$ is non-degenerate, then $(G,P(t))$ is also non-degenerate for all $t \ge 0$. Moreover, if the flow converges for $t \to \infty$ to $P^\infty = \lim_{t \to \infty} P(t)$, then the weighted graph $(G,P^\infty)$ is curvature sharp. 
\end{thm}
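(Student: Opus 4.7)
The four claims (local well-definedness, preservation of the Markovian and non-degeneracy properties, global existence, and curvature sharpness of any limit) can be established as a single bootstrap along the flow, starting from the polynomial ODE \eqref{eq:flowdiffeq-explicit}.

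First, \eqref{eq:flowdiffeq-explicit} shows that the right-hand side is a polynomial in the current transition rates, avoiding the pseudoinverse entirely, so Picard--Lindel\"of produces a unique smooth solution $P(t)$ on a maximal interval $[0,T_{\max})$. To verify that the Markovian property is preserved, I would differentiate the constraint $\sum_y p_{xy}(t)=1$: since $p_{xx}'(t)=0$ by \eqref{eq:laziness}, it suffices to show ${\bf 1}_m^\top {\bf p}_x'(t)=0$. Summing \eqref{eq:flowdiffeq} against ${\bf 1}_m$ reduces this to the identity $2\,{\bf 1}_m^\top Q_x(t)\,{\bf 1}_m = C_x(t)(1-p_{xx})$, which is the Schur-complement identity from Section 2.4 of \cite{CKLMPS-22}: in the Markovian setting ${\bf 1}_m^\top Q_x(t){\bf 1}_m=\Gamma_2(d(x,\cdot))(x)$ and $\Gamma(d(x,\cdot))(x)=(1-p_{xx})/2$, so the right-hand side equals $K_{P(t),\infty}^{d(x,\cdot)}(x)(1-p_{xx})$, which is precisely our choice of normalisation.

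For non-degeneracy, I would read \eqref{eq:flowdiffeq-explicit} as $p_{xy}'(t)=\alpha_{xy}(t)\,p_{xy}(t)+\beta_{xy}(t)$, where $\alpha_{xy}(t)$ is the bracketed coefficient and $\beta_{xy}(t)=\sum_{y'\neq y}p_{xy'}(t)p_{y'y}(t)\ge 0$. On the compact set of stochastic matrices (guaranteed by the previous step), $|\alpha_{xy}(t)|$ is bounded by a uniform constant $C$, hence $p_{xy}'(t)\ge -C\,p_{xy}(t)$ and Gr\"onwall's inequality gives $p_{xy}(t)\ge p_{xy}(0)e^{-Ct}>0$ for every initially non-degenerate edge. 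Together with $p_{xy}(t)\le 1$, this keeps $P(t)$ inside a compact subset of the non-degenerate Markovian region on any finite time interval, which rules out blow-up (so $T_{\max}=\infty$) and preserves smoothness of all the Schur complement data.

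Finally, assume $P(t)\to P^\infty$. Since $Q_x$ and $C_x$ are continuous in $P$ on the non-degenerate region, the vector field $F(P):=(-4Q_x(P){\bf 1}_m+2C_x(P){\bf p}_x)_{x\in V}$ satisfies $P'(t)=F(P(t))\to F(P^\infty)$; if $F(P^\infty)\neq 0$, the integral $\int_0^t F(P(s))\,ds$ would diverge, contradicting convergence, so $F(P^\infty)=0$. For every non-isolated $x$ this reads $Q_x^\infty{\bf 1}_m=\tfrac12 K_{P^\infty,\infty}^{d(x,\cdot)}(x){\bf p}_x^\infty$, which is exactly the curvature sharpness criterion of Theorem \ref{thm:main}(3); isolated vertices are trivially curvature sharp. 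The main difficulty is the non-degeneracy step: because $Q_x$ involves a pseudoinverse whose continuity can fail on degenerate strata, the Gr\"onwall estimate must be derived directly from the pseudoinverse-free formula \eqref{eq:flowdiffeq-explicit}, so that Markovianity, non-degeneracy, and smooth extendability of the flow propagate together.
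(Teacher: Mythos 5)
You should first be aware that this paper does not actually prove Theorem \ref{thm:curvflowprops}: it is quoted verbatim from Theorem 1.5 and Proposition 1.6 of \cite{CKLMPS-22}, so I can only compare your plan against the intended argument. Your architecture is essentially the right one: local existence from the polynomial form \eqref{eq:flowdiffeq-explicit}, row-sum preservation from the identity ${\bf 1}_m^\top Q_x {\bf 1}_m = \Gamma_2(d(x,\cdot))(x)$ together with $\Gamma(d(x,\cdot))(x)=(1-p_{xx})/2$ and the choice \eqref{eq:CxMark}, a Gr\"onwall bound for strict positivity, and identification of any limit with a stationary point, which via Theorem \ref{thm:main}(3) is curvature sharpness. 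Those individual identities are correct.

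There are, however, two concrete gaps. First, you invoke ``the compact set of stochastic matrices (guaranteed by the previous step)'', but the previous step only gives constant row sums and constant laziness; it does not give $0\le p_{xy}(t)\le 1$. Preservation of \emph{nonnegativity} is the substantive half of ``preserves the Markovian property'', and it is also exactly what your argument consumes: the sign $\beta_{xy}\ge 0$ and the uniform bound on $\alpha_{xy}$ both presuppose nonnegative, bounded rates, so as written the Gr\"onwall/compactness step is circular. The missing ingredient is a quasi-positivity (invariance) argument: on the closed set where all rates are nonnegative, $p_{xy}=0$ forces $p_{xy}'=\sum_{y'\neq y}p_{xy'}p_{y'y}\ge 0$, so the nonnegative region cut out by the row-sum constraint is forward invariant; only then do you get $p_{xy}(t)\in[0,1]$, hence global existence (also for degenerate Markovian initial data, which the theorem covers), and only then your Gr\"onwall estimate for initially non-degenerate edges. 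Second, in the limit step you rely on continuity of $Q_x$ and $C_x$ ``on the non-degenerate region'', but $P^\infty$ typically lies on the degenerate boundary (the paper emphasizes that many rates tend to zero), which is precisely where the pseudoinverse-based $Q_x$ can fail to be continuous. You flag this issue for the Gr\"onwall step but then use the dangerous continuity where it matters most. The repair is to take $F$ to be the polynomial right-hand side of \eqref{eq:flowdiffeq-explicit}, which is continuous on the whole simplex, conclude $F(P^\infty)=0$ from $P(t)-P(0)=\int_0^t F(P(s))\,ds$, and then use that the polynomial expression for $Q_x{\bf 1}_m$ (formula (66) of \cite{CKLMPS-22}) remains valid for degenerate nonnegative Markovian weights, so that stationarity at $P^\infty$ is still equivalent to \eqref{eq:curv-sharp-Q}. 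With these two repairs your proposal matches the intended proof.
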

 
We like to emphasize that, even in the Markovian case, a flow limit $P^\infty = \lim_{t \to \infty} P(t)$
of a non-degenerate weighted graph $(G,P_0)$ is in most cases no longer non-degenerate, despite the fact that all weighting schemes $P(t)$ for finite $t \ge 0$ are non-degenerate. In other words, some transition probabilities converge to zero under our normalized curvature flow, as time tends to infinity.
	
\section{Curvature flow examples}
\label{sec:curv-flow-ex}

In this section, we investigate normalized curvature flows on some unmixed combinatorial graphs $G = (V,E)$. By \emph{unmixed} we mean that $G$ does not have one-sided edges. 
We assume in all examples and statements in this section that our graphs $G=(V,E)$ are finite, simple, unmixed and connected and that our initial weighting schemes $P_0= P(0)= (p_{xy}(0))_{x,y \in V}$ are non-degenerate Markovian without laziness (even if we do not mention this). Curvature flow limits $(G,P^\infty)$ with $P^\infty = \lim_{t \to \infty} P(t)$ are necessarily curvature sharp by Theorem \ref{thm:curvflowprops} above. We do not know of any initial Markovian weighted graph which does not converge as $t \to \infty$. Moreover, we know that every finite connected graph with at least two vertices admits many curvature sharp Markovian weighting schemes without laziness (see \cite[Theorem 1.10]{CKLMPS-22}).
These facts give rise to the following conjecture.

\begin{conj}[see Conjecture 1.7 in \cite{CKLMPS-22}] The curvature flow $(G,P(t))_{t \ge 0}$ with normalization 
\eqref{eq:CxMark} converges for any initial condition $(G,P_0)$ as $t \to \infty$.
\end{conj}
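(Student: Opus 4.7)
The plan is to combine compactness with a Lyapunov/gradient-flow argument. Since every $P(t)$ is stochastic with fixed diagonal $p_{xx}(t) \equiv p_{xx}(0)$, the trajectory lies in a compact convex subset of $\mathbb{R}^{N \times N}$, so its $\omega$-limit set $\Omega(P_0)$ is nonempty, compact and connected. By Theorem \ref{thm:curvflowprops}, every point of $\Omega(P_0)$ is curvature sharp, i.e.\ satisfies the fixed-point condition $Q_x \mathbf{1}_m = \tfrac{1}{2} K_{P,\infty}^{d(x,\cdot)}(x)\, \mathbf{p}_x$ for all $x$ by Theorem \ref{thm:main}. The task therefore reduces to showing that $\Omega(P_0)$ consists of a single point.

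The first and decisive step I would attempt is to expose a Lyapunov structure. The right-hand side of \eqref{eq:flowdiffeq} is exactly $-4$ times the ``curvature-sharpness defect'' $Q_x(t)\mathbf{1}_m - \tfrac{1}{2} C_x(t)\mathbf{p}_x(t)$, which vanishes precisely at fixed points. I would search for a real-analytic functional $F$ on the polytope of admissible weighting schemes whose Euclidean gradient in the off-diagonal entries $p_{xy}$ equals (a positive multiple of) this defect at each vertex. Natural candidates built from the local curvature data are $F(P) = -\sum_x w_x K_{P,\infty}^{d(x,\cdot)}(x)$ for suitable weights $w_x$, or quadratic forms in the Schur complements $Q_x$; the weights would be chosen, if possible, so that the variation of $F$ matches the flow vector field. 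Such an $F$ would immediately yield $\dot F \le 0$ along the flow, with equality exactly at curvature sharp configurations.

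Given a real-analytic Lyapunov function, convergence would follow from the {\L}ojasiewicz--Simon theorem: a real-analytic gradient flow in a finite-dimensional ambient space whose trajectory accumulates at a critical point converges to a single point. All ingredients (Schur complements, $K_{P,\infty}^{d(x,\cdot)}$, and pseudoinverses on loci of constant rank) are real-analytic on open strata of $\mathcal{P}$, so on each closed stratum where $\mathrm{rank}\,\Gamma_2(x)_{S_2}$ is constant the {\L}ojasiewicz inequality applies. One would then glue across strata, using connectedness of $\Omega(P_0)$ and the fact that the flow cannot cross into a stratum of lower rank without slowing down in a controlled way.

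The main obstacle is the very first step: it is not clear that $-4Q_x\mathbf{1}_m + 2 C_x\mathbf{p}_x$ is the gradient, in any fixed inner product, of a globally defined functional, because $Q_x$ depends nonlinearly on $P$ through a Schur complement and $C_x = K_{P,\infty}^{d(x,\cdot)}(x)$ is itself a ratio of analytic functions of $P$. If no honest gradient structure exists, a fallback is to verify a {\L}ojasiewicz-type inequality directly: prove that there exist $\theta\in(0,1/2]$ and $c,\delta>0$ such that in a neighborhood of every fixed point $P^*$ one has $\|\mathbf{p}'(t)\| \ge c\,|F(P)-F(P^*)|^{1-\theta}$ for some analytic $F$, exploiting the real-analyticity of the Schur complement formulation. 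A secondary obstacle is that the flow may approach the boundary of the Markovian polytope, as noted below Theorem \ref{thm:curvflowprops}; near this boundary the $2$-ball structure effectively changes, so one must check that the defect extends continuously and that the {\L}ojasiewicz argument survives on the closure.
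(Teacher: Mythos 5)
The statement you are trying to prove is not proved in the paper at all: it is stated as a conjecture (Conjecture 1.7 of \cite{CKLMPS-22}), and the authors explicitly say only that they know of no initial Markovian weighted graph for which the flow fails to converge. So there is no proof in the paper to compare against, and your text should be judged as a proposed attack on an open problem rather than as an alternative proof.

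As such, it contains genuine gaps. The decisive one is the very step you yourself flag: you never exhibit a Lyapunov functional, a gradient structure, or a {\L}ojasiewicz-type inequality for the normalized flow; the candidates you name (e.g.\ $F(P)=-\sum_x w_x K_{P,\infty}^{d(x,\cdot)}(x)$) are not shown to have gradient equal to the curvature-sharpness defect in any inner product, and without that the entire {\L}ojasiewicz--Simon machinery has nothing to act on. A second, more subtle error is the claim that ``by Theorem \ref{thm:curvflowprops}, every point of $\Omega(P_0)$ is curvature sharp.'' That theorem only asserts curvature sharpness of the limit \emph{when the flow converges}; for a merely bounded trajectory, the $\omega$-limit set is invariant and connected but its points need not be equilibria — concluding that they are is exactly what a LaSalle/Lyapunov argument would give you, so this step is circular given that the Lyapunov function is the missing ingredient. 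Finally, the boundary issue is more serious than you suggest: the paper's own examples show that limits are typically degenerate (some $p_{xy}\to 0$), the combinatorial distance functions $d(x,\cdot)$ and hence the $2$-ball data defining $Q_x$ and $K_{P,\infty}^{d(x,\cdot)}$ can jump on the boundary of the Markovian polytope, and the paper exhibits unstable equilibria (simple random walks on cycles $C_N$, $N\ge 4$, and on hypercubes) and nontrivial center-type directions (zero eigenvalues of $DF$ at equilibria on $Q^2$), so any convergence proof must handle non-hyperbolic equilibria and stratified analyticity — none of which is addressed beyond a statement of intent. Your outline is a reasonable research program, but it does not constitute a proof.
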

 
Let us now address some practical aspects of the curvature flow implementation. The solution of the curvature flow is computed numerically by the Runge-Kutta (RK4) method, which is based on a time discretization with time increments $dt > 0$. In the following examples we choose the step sizes $dt = 0.1$ and $dt = 0.3$. In order to distinguish between the theoretical curvature flow and its implementation, we refer to the latter as the \emph{numerical curvature flow}.
Since a numerical curvature flow cannot run forever, a suitable numerical convergence criterion needs to be introduced. Our convergence criterion is based on the parameter ${\rm{lim}}_{\rm{tolerance}} > 0$. We say that a numerical curvature flow solution $(P(t))_{t \ge 0}$ has converged numerically at time $t$ (with respect to the parameter ${\rm{lim}}_{\rm{tolerance}}$), if all the entries of $P(t)$ differ from the corresponding entries of $P(t+10)$ and $P(t+20)$ by less than ${\rm{lim}}_{\rm{tolerance}}$. The numerical flow limit is then defined to be $P(t)$. In all examples to follow we set ${\rm{lim}}_{\rm{tolerance}} = 0.001$.

In this section, we are particularly interested in numerical flow limits which are not \emph{numerically totally degenerate}. An unmixed weighted graph $(G,P)$ is called \emph{numerically totally degenerate} if there are no edges with numerical non-zero transition rates in both directions, where we consider a transition rate $p_{xy}$ as numerically non-zero (with respect to a parameter ${\rm{threshold}} > 0$), if and only if $p_{xy} \ge {\rm{threshold}}$. In all examples to follow we set ${\rm{threshold}} = 0.001$.  
 
\subsection{Random graphs}

In this subsection we investigate the numerical curvature flow for random weighted graphs $(G,P_0)$ with vertex set $V$. The edge set $E$ of $G$ is generated by an Erd\"os-Renyi process, that is, any pair of vertices is independently and randomly connected by a two-sided edge with a probability $p \in [0,1]$. Similarly, we choose a random initial weigthing scheme $P_0$ with the property that all non-zero transition rates $p_{vv'}(0)$ lie in the interval $[{\rm{threshold}},1]$, for some positive parameter ${\rm{threshold}} > 0$. We are interested in properties of numerical flow limits of these graphs. If the parameter $p > 0$ in the Erd\"os-Renyi process is too small, these flow limits are always numerically totally degenerate. A reasonable choice to obtain not numerically totally degenerate flow limits for such random graphs with, say, $10$ vertices in roughly half of the cases, is $p = 0.7$. 

\begin{ex}[A random graph with $10$ vertices] \label{ex:random-graph}
Let $(G,P_0)$ be the unmixed weighted Markovian graph with
vertex set $V = \{v_0,\dots,v_9\}$ and
$$ P_0 = \begin{pmatrix}
0&  0.1&  0.08& 0.17& 0&  0.28& 0.21& 0.08& 0&  0.08\\ 
0.08& 0&  0&  0.16& 0&  0.2&  0.07& 0.3&  0.04& 0.15\\ 
0.27& 0&  0&  0&  0&  0&  0.3&  0&  0&  0.43\\ 
0.02& 0.19& 0&  0&  0.17& 0.17& 0.11& 0.34& 0&  0\\
0&  0&  0&  1&  0&  0&  0&  0&  0&  0\\
0.04& 0.21& 0&  0.41& 0&  0&  0.34& 0&  0&  0\\ 
0.06& 0.29& 0.14& 0.12& 0&  0.3&  0&  0.09& 0&  0\\
0.08& 0.31& 0&  0.19& 0&  0&  0.23& 0&  0.19& 0\\ 
0&  0.13& 0&  0&  0&  0&  0&  0.25& 0&  0.62\\ 
0.1&  0.33& 0.38& 0&  0&  0&  0&  0&  0.19& 0 \end{pmatrix} $$
This randomly generated initial graph is illustrated on the left hand side of Figure \ref{fig:random-graph}. The numerical curvature flow of $(G,P_0)$ has numerical convergence time $t_{\rm{max}} = 20.7$ (with respect to ${\rm{lim}}_{\rm{tolerance}} = 0.001$). The numerical flow limit $(G,P(t_{\rm{max}}))$ is presented on the right hand side of Figure \ref{fig:random-graph}. Let us briefly explain the illustration of the edges of this flow limit: Edges with numerical non-zero transition rates in both directions are displayed in green. Edges with only one-sided numerical non-zero transition rates are displayed as dashed red lines with arrows. Edges whose transition rates shrink in both directions numerically to zero under the curvature flow are displayed as dotted black lines. The corresponding non-zero transition rates are written along these edges. The vertices of the green edges of the flow limit in Figure \ref{fig:random-graph} are given by 
$$ W = \{ v_0, v_1, v_3, v_5, v_6, v_7 \}. $$
Since there are no numerical non-zero transition rates from these vertices to the other vertices $v_2, v_4, v_8, v_9 \in V \setminus W$, the vertex set $W$ together with the green edges and their transition rates represent a highly connected non-degenerate Markovian weigthed subgraph $(G_W,P_W)$. The combinatorial graph $G_W$ with vertex set $W$ can be viewed as a double cone over the complete graph $K_4$ of the four vertices $v_0, v_1, v_3, v_6$ with the vertices $v_5,v_7$ as its cone tips. The transition rates of the weighting scheme $P_W$ towards all vertices in $K_4$ are $0.25 = 1/4$, all transition rates towards $v_5$ are $x=0.05$ and all transition rates towards $v_7$ are $y=0.2$. Note that such a weighted double cone over $K_4$ with these transition rates for any choice of $x,y > 0$ satisfying $x+y = 1/4$ is curvature sharp (this follows readily from the geometric criterion in \cite[Theorem 3.15]{CKLMPS-22}, since the weighting scheme is volume homogeneous in all vertices and reversible with $\pi(v_5) = 4x/5$, $\pi(v_7) = 4y/5$ and $\pi\vert_{K_4} \equiv 1/5$). Therefore, not only the flow limit
itself is curvature sharp by Theorem \ref{thm:curvflowprops} but also the highly connected non-degenerate weighted subgraph $(G_W,P_W)$. 

Figure \ref{fig:random-graph-trans-rates} presents the transition rates of the vertices $v_3, v_7$ under the curvature flow as functions over the interval $[0,t_{\rm{max}}]$. While most transition rates converge to strictly positive limits, the transition rates $p_{v_3v_4}(t)$ and $p_{v_8v_9}(t)$ shrink to zero. Consequently, the corresponding edges on the right hand side of Figure \ref{fig:random-graph} are represented by a dashed red line and a black dotted line, respectively.

Let us finally, consider the curvatures $t \mapsto K_{P(t),N}(v_j)$ of the vertices $v_j$ under the curvature flow. We focus exemplary on the vertices $v_1$ and $v_9$ and the dimension parameter $N = \infty$. Figure \ref{fig:random-graph-curvatures} presents the $\infty$-curvature (in blue) and upper curvature bound $K_{P(t),\infty}^{d(v,\cdot)}(v)$ (in orange) of $v \in \{v_1,v_9\}$ as functions over the interval $[0,t_{\rm{max}}]$. Note that the absence of laziness implies that the upper curvature bounds $K_{P(t),\infty}^{d(v,\cdot)}(v)$ are all $\ge 0$ (see \cite[(19)]{CKLMPS-22}). For both vertices, the curvature and upper curvature bound functions are numerically asymptotic as $t \to t_{\rm{max}}$, indicating that $v_1$ and $v_9$ of the flow limit are $\infty$-curvature sharp. (In fact, all vertices in this example are $\infty$-curvature sharp with respect to the flow limit.) This is not always the case, but Theorem \ref{thm:main} confirms that all vertices of a flow limit $(G,P^\infty)$ are at least $N$-curvature sharp for dimension $N=2$. Moreover, the initial and final $\infty$-curvatures of all vertices under the curvature flow are presented in Table \ref{tab:random-graph-curvatures}. The final curvatures of all vertices in $K_4$ assume the highest values $0.875$, followed by the
final curvature values $0.773$ and $0.476$ of the vertices $v_7$ and $v_5$, respectively. All other vertices in $V \setminus V_0$ have much lower final curvatures with values $\le 0.125$.

\begin{figure}[h]
\includegraphics[width=0.49\textwidth]{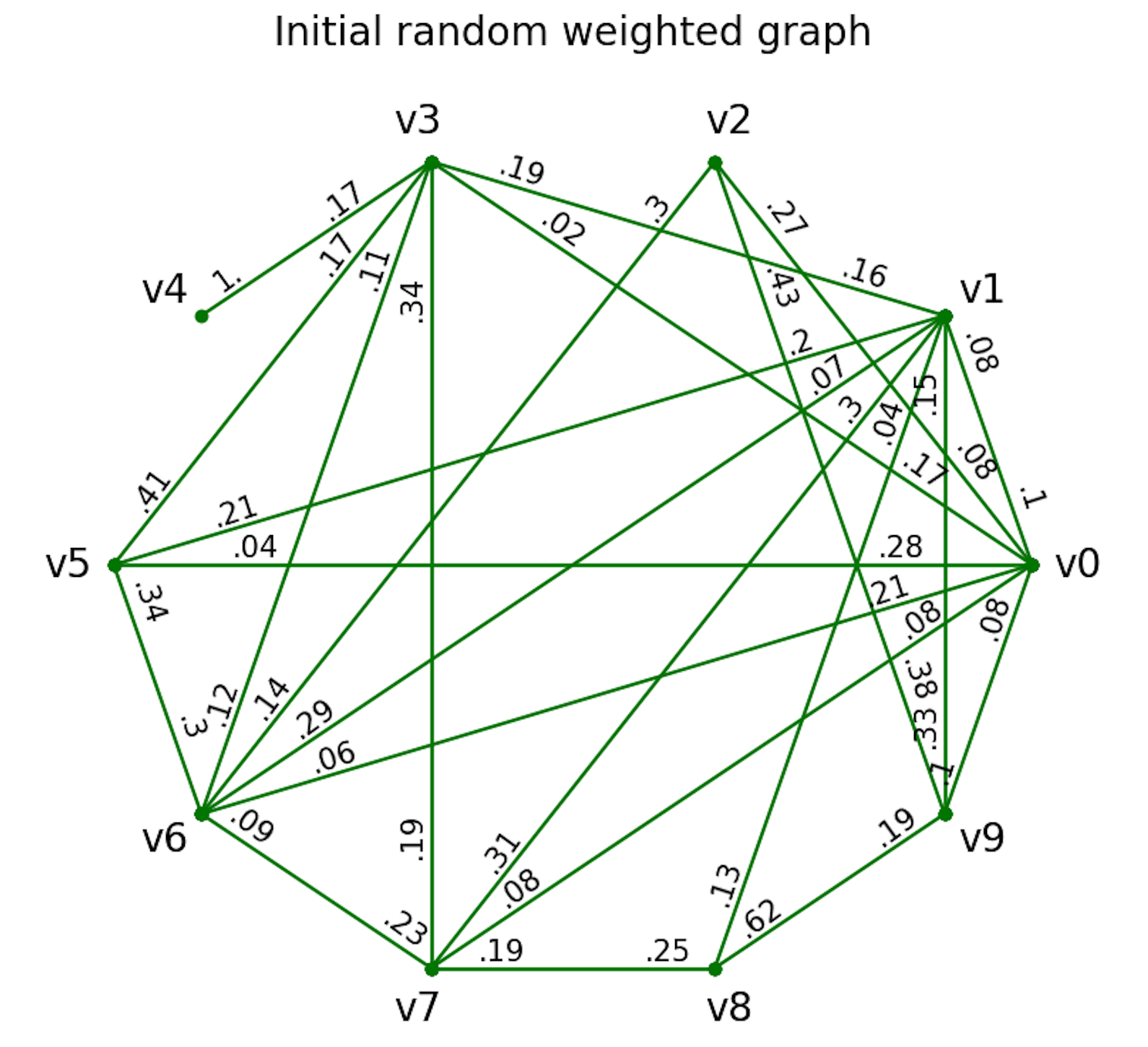}
\includegraphics[width=0.49\textwidth]{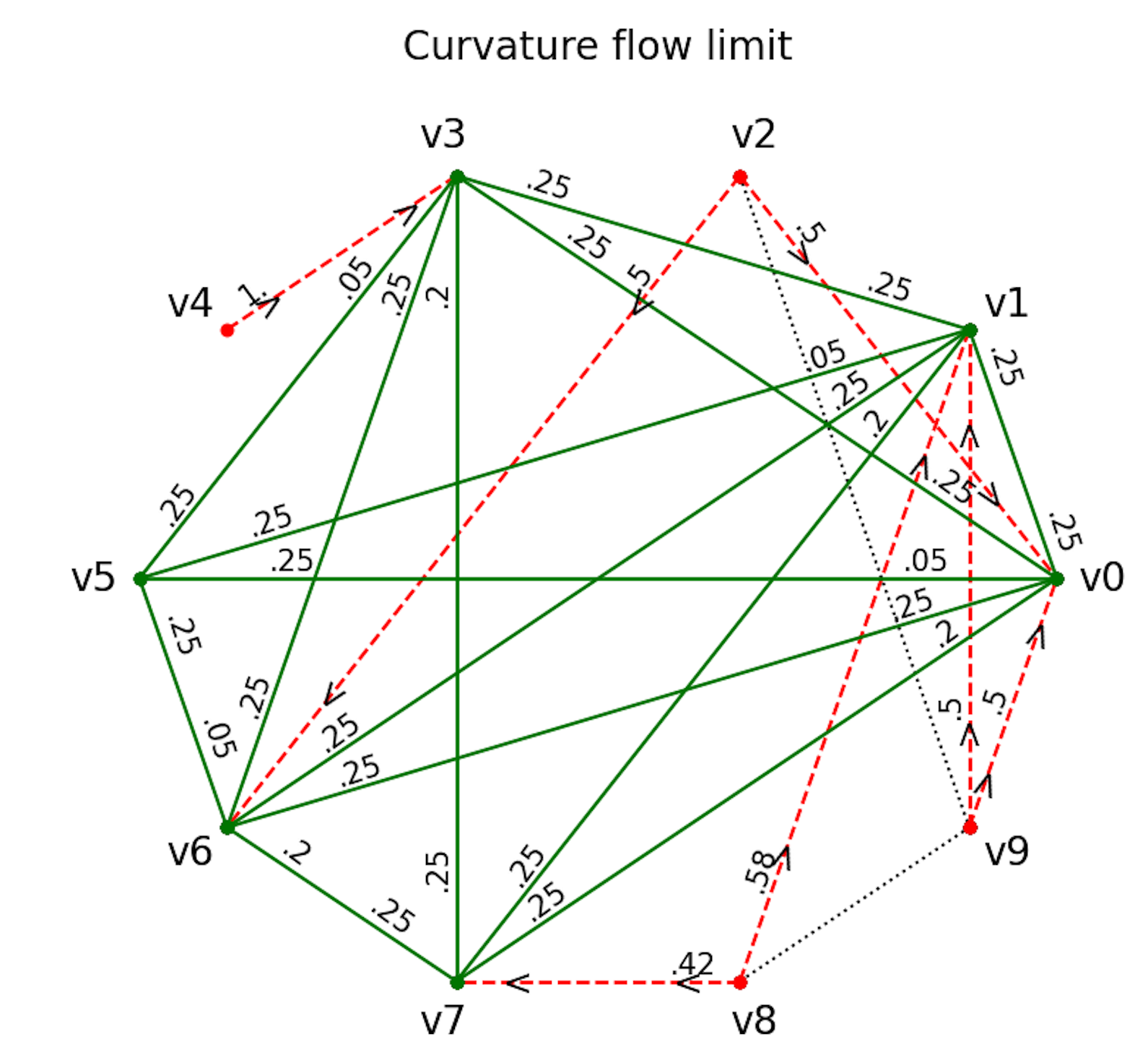}
\caption{Curvature flow of a random graph with $10$ vertices with initial weighting scheme $P_0$ (left hand side) and final weighting scheme $P(t_{\rm{max}})$ (right hand side)}
\label{fig:random-graph}
\end{figure}

\begin{figure}[h]
\includegraphics[width=0.49\textwidth]{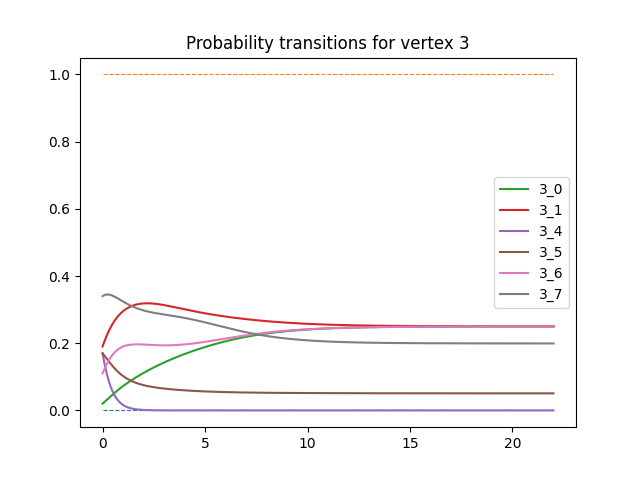}
\includegraphics[width=0.49\textwidth]{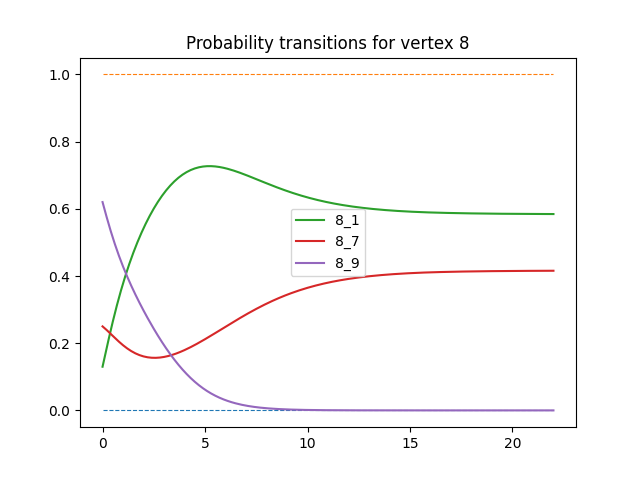}
\caption{Transition rates of vertices $v_3$ and $v_8$ of a random graph with $10$ vertices under the curvature flow}
\label{fig:random-graph-trans-rates}
\end{figure}

\begin{figure}[h]
\includegraphics[width=0.49\textwidth]{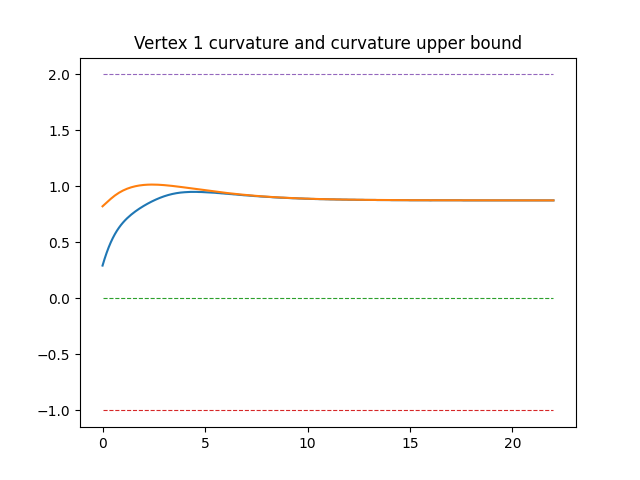}
\includegraphics[width=0.49\textwidth]{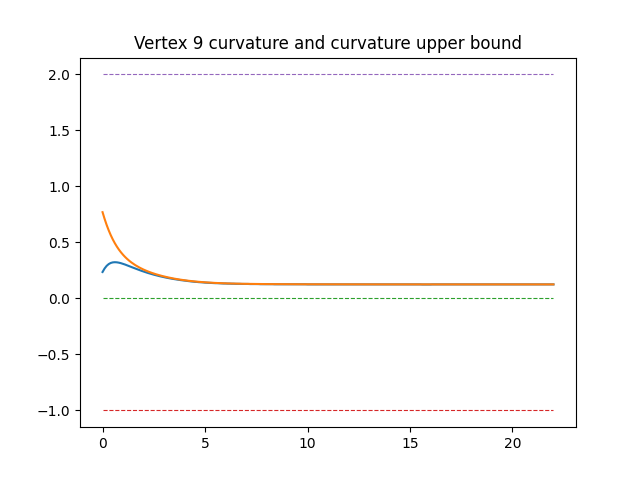}
\caption{Curvatures (blue) and upper curvature bounds (orange) of vertices $v_1$ and $v_9$ of a random graph with $10$ vertices for the dimension $\infty$ under the curvature flow}
\label{fig:random-graph-curvatures}
\end{figure}

\begin{table}[h]
\begin{minipage}[h]{0.49\textwidth}
\begin{center}
\begin{tabular}{l|ll}
$j$ & $K_{P(0),\infty}(v_j)$ & $K_{P(t_{\rm{max}}),\infty}(v_j)$ \\ \hline\\[-0.4cm]
$0$ & $0.406$ & $0.875$ \\
$1$ & $0.293$ & $0.875$ \\
$2$ & $0.168$ & $0.125$ \\
$3$ & $0.346$ & $ 0.875$ \\
$4$ & $0.34$ & $0$
\end{tabular}
\end{center}
\medskip
\end{minipage}
\begin{minipage}[h]{0.49\textwidth}
\begin{center}
\begin{tabular}{l|ll}
$j$ & $K_{P(0),\infty}(v_j)$ & $K_{P(t_{\rm{max}}),\infty}(v_j)$ \\ \hline\\[-0.4cm]
$5$ & $0.527$ & $0.476$ \\
$6$ & $0.404$ & $0.875$ \\
$7$ & $0.202$ & $0.773$ \\
$8$ & $0.246$ & $0.11$ \\
$9$ & $0.236$ & $0.125$
\end{tabular}
\end{center}
\medskip
\end{minipage}
\caption{Vertex curvatures for the dimension $\infty$ of a random graph with $10$ vertices at the beginning and the end of the curvature flow}
\label{tab:random-graph-curvatures}
\end{table}
\end{ex}

\FloatBarrier

Example \ref{ex:random-graph} and its illustrations was generated by the following code. 

\begin{lstlisting}[language=Python]{}
dt = 0.1                  # time increment in the Runge-Kutta (RK4) algorithm 
stoch_corr = False        # automatic stochastic correction in the RK4 algorithm
norm_tolerance = 0.001    # threshold to apply stochastic correction
threshold = 0.001         # threshold to consider a number numerally as zero
lim_tolerance = 0.001     # threshold to define flow convergence
t_lim = 10000             # maximal flow time 
p = 0.7                   # Erdoes-Renyi probability parameter
k = 1                     # time multiplier for consecutive curvature computations
is_Markov = True          # curvatures are w.r.t. a Markovian weighting scheme
N = inf                   # dimension parameter for curvature computations
laziness = False          # Boolean in the generation of random weighting schemes   

A = rand_adj_mat(10, 0.7, False) 
P_0 = randomizer(A, threshold, laziness)

limit = norm_curv_flow_lim(A, P_0, dt, stoch_corr, norm_tolerance, lim_tolerance, t_lim)

t_max=limit[1]            # limit[1] is the convergence time
                          # as calculated by norm_curv_flow_lim

flow = norm_curv_flow(A, P_0, t_max, dt, stoch_corr, norm_tolerance)

display_weighted_graph(A, P_0, "Initial random weighted graph", threshold)
display_weighted_graph(A, limit[0], "Curvature flow limit", threshold)
display_trans_rates(A, flow, dt, [3, 8])

curvs = calc_curvatures(A, flow, N, k)
curv_bound = calc_curv_upper_bound(A, flow, N, k)
display_curvatures(curvs, dt, is_Markov, N, k, curv_bound, [1, 9])
\end{lstlisting}

Let us explain the commands of this program in detail. After initializing various parameters in lines 1-11, a random graph $G=(V,E)$ is generated in line 13 via an Erd\"os-Renyi process with probability $p=0.7$. A corresponding random non-degenerate Markovian weigthing scheme without laziness is generated in line 14, with each non-zero transition rate satisfying $p_{vv'}(0) \in [{\rm{threshold}},1]$. The numerical convergence time $t_{\rm{max}} \ge 0$  is determined in lines 16-17. In most cases, the convergence time does not exceed $100$. (If convergence is not achieved by $t_{\rm{lim}} = 10000$, the curvature flow computation stops at that time and notifies the user.) The numerical curvature flow on the interval $[0,t_{\rm{max}}]$ is solved again in line 21. Initial and final weighting schemes are displayed by the commands in lines 23 and 24, respectively, providing the illustrations given in Figure \ref{fig:random-graph}. The transition rates of the vertices $v_3$ and $v_8$ are displayed by the command in line 25, providing the illustrations given in Figure \ref{fig:random-graph-trans-rates} The curvatures and upper curvature bounds at time steps $j \cdot \frac{k \cdot t_{\rm{max}}}{dt}$, $j=0,1,\dots$, are computed in lines 27 and 28, respectively, with the choice $k=1$. They are displayed for the vertices $v_1$ and $v_9$ via the command in line 29, providing the illustrations given in Figure \ref{fig:random-graph-curvatures}. 

\medskip

When running this program, users may be faced with the following message:

\medskip

\texttt{`norm_tolerance' has been exceeded at one or more vertices, at time t = ... Would you like to:} \\
\texttt{A = Stop calculation and return list of P-matrices so far} \\
\texttt{B = Apply manual normalization now, and apply it again when necessary without asking (you will still be notified when it is applied)} \\
\texttt{C = Apply manual normalization now, and ask again before reapplying it} \\
\texttt{Please enter A, B or C here:}

\medskip

The reason behind this message is the following. The computation of the numerical curvature flow is based on a time discretization. Therefore, the solution will increasingly depart from the Markovian property after each time increment $dt = 0.1$. If the sum of entries of one of the rows of $P(t)$ at time $t$ differs from one by more than ${\rm{norm}}_{\rm{tolerance}}=0.001$, the program informs the user that a normalization of the weighting scheme is needed for the continuation of the flow calculations. After choosing the option \texttt{'B'}, the program will continue with its flow calculations without further interruptions, and the user is simply notified about the times at which the program applies further artificial normalizations of the transition rates. The user can suppress this message entirely by changing line 2 of the program into "\texttt{stoch_corr = True}", in which case the program applies stochastic corrections automatically, each time with the message

\medskip

\texttt{Transition rates have been artificially normalized at time t = ...}

\medskip

The numerical observations of Example \ref{ex:random-graph} suggest that similar properties may also hold in general in the theoretical setting. Firstly, we call an edge $\{x,y\} \in E$ in an unmixed weighted graph $(G,P)$ \emph{non-degenerate} if $p_{xy}, p_{yx} > 0$. An unmixed weighted graph $(G,P)$ is called \emph{totally degenerate} if it does not have any non-degenerate edges. Secondly, we denote the vertices of all non-degenerate edges of a flow limit $(G,P^\infty)$ by $W \subset V$, and $G_W$ denotes the subgraph of $G$ consisting of the vertices $W$ and all non-degenerate edges of $(G,P^\infty)$. Moreover, $P_W$ denotes the restriction of the weigthing scheme $P^\infty$ to the vertex set $W$. We call $G_W$ the non-degenerate subgraph of $(G,P^\infty)$ and we conjecture the following: 

\begin{conj}
If the normalized curvature flow of a nondegenerate unmixed Markovian weighted graph $(G,P_0)$ converges to a not totally degenerate limit $(G,P^\infty)$, then the non-degenerate subgraph $G_W$ coincides with the induced subgraph (of $G$) of the subset $W \subset V$ and all transition rates from $W$ to $V \setminus W$ are zero. $(G_W,P_W)$ is a non-degenerate Markovian weighted graph which is itself curvature sharp. 
\end{conj}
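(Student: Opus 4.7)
The plan is to exploit the fact that any convergent trajectory of the normalized curvature flow tends to an equilibrium. Because the flow is autonomous with smooth right-hand side on the compact Markov simplex, $P(t)\to P^\infty$ forces $p_{xy}'(t)\to 0$: if the limit $\lim_{t\to\infty} p_{xy}'(t)$ were non-zero, then by continuity of the vector field in $P$ the derivative would be bounded away from $0$ for large $t$, contradicting the boundedness of $p_{xy}$. Substituting into \eqref{eq:flowdiffeq-explicit} and using the no-laziness assumption ($p_{xx}(t)\equiv p_{yy}(t)\equiv 0$ and $D_x\equiv 1$), I would record the crucial equilibrium identity
\[
p_{xy}^\infty\,\Phi_{xy}^\infty \;+\; \bigl((P^\infty)^2\bigr)_{xy}\;=\;0 \qquad\text{for every }\{x,y\}\in E,
\]
where $\Phi_{xy}^\infty$ is the bracketed factor in \eqref{eq:flowdiffeq-explicit} evaluated at $P^\infty$, and the identification $\sum_{y'\neq y}p_{xy'}^\infty p_{y'y}^\infty=((P^\infty)^2)_{xy}$ uses no laziness. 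Since the second summand is non-negative, this yields the key implication: if $p_{xy}^\infty=0$ on an edge $\{x,y\}\in E$, then $((P^\infty)^2)_{xy}=0$, i.e.\ no two-step walk $x\to z\to y$ has positive probability under $P^\infty$.

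Step two is to show that no probability leaks from $W$ to $V\setminus W$. Suppose $x\in W$ and $y\in V\setminus W$ with $p_{xy}^\infty>0$. Then $y\in N(x)$, and since $y\notin W$ the edge must be degenerate in the other direction, so $p_{yx}^\infty=0$. Applying the key implication to $(y,x)$ gives $((P^\infty)^2)_{yx}=0$, whence $p_{yz}^\infty p_{zx}^\infty=0$ for every $z\in V$. Choosing a $W$-witness $y_0$ of $x$ (so $p_{xy_0}^\infty,p_{y_0x}^\infty>0$), I deduce $p_{yy_0}^\infty=0$ whenever $y_0\in N(y)$. I would then combine this with the Markov condition at $y$ and Theorem \ref{thm:main} at $y$, which forces the coordinate identity $(Q(y){\bf{1}}_m)_x=\tfrac{1}{2}K_{P^\infty,\infty}^{d(y,\cdot)}(y)\,p_{yx}^\infty=0$, to derive enough rigid relations on the local Schur complement at $y$ to rule out $p_{xy}^\infty>0$.

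Step three is the induced-subgraph claim: for $x,y\in W$ adjacent in $G$, both $p_{xy}^\infty>0$ and $p_{yx}^\infty>0$. Assume $p_{xy}^\infty=0$; then $((P^\infty)^2)_{xy}=0$, so for every $W$-witness $y_0$ of $x$ adjacent to $y$ we get $p_{y_0y}^\infty=0$, and for every $W$-witness $z_0$ of $y$ adjacent to $x$ we get $p_{xz_0}^\infty=0$. I would then invoke Theorem \ref{thm:main} at $x$ and at $y$ to convert these pointwise zeros into Schur-complement relations incompatible with both $x\in W$ and $y\in W$. With steps two and three in hand, the remaining assertions follow quickly: $(G_W,P_W)$ is Markovian because $\sum_{y\in W}p_{xy}^\infty=\sum_{y\in V}p_{xy}^\infty=1$ for $x\in W$; it is non-degenerate by the very definitions of $W$ and $G_W$; and it is curvature sharp because, thanks to the closure property from step two, the pointwise operators $\Delta,\Gamma,\Gamma_2$ and hence the Schur complement $Q(x)$ at any $x\in W$ depend only on $P^\infty|_W$, so the curvature-sharp identity \eqref{eq:curv-sharp-Q} transfers verbatim from $(G,P^\infty)$ to $(G_W,P_W)$.

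The main obstacle is completing steps two and three in the situation where the available $W$-witnesses are \emph{not} adjacent to the problematic vertex, so the two-step-walk obstruction $((P^\infty)^2)_{xy}=0$ yields no local constraint. Here I expect the full strength of Theorem \ref{thm:main} — the equation ${Q(x){\bf{1}}_m=\tfrac{1}{2}K_{P,\infty}^{d(x,\cdot)}(x)\,{\bf{p}}_x}$ — together with Markovianity to be essential, perhaps combined with an induction on distance to $W$ inside the ball $B_2(x)$. The conjectural status of the statement reflects exactly this difficulty: the local algebraic obstructions are visibly strong, but it is not obvious that they are strong enough to globalise without an additional rigidity argument drawn from the curvature-sharpness condition on the whole graph.
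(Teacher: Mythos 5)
This statement is labelled as a conjecture in the paper, and the paper offers no proof of it at all: the only support given is numerical evidence (Example \ref{ex:random-graph} and the surrounding experiments), so there is no argument of the authors to compare yours against. Judged on its own, your proposal is not a proof, and you say so yourself: the entire content of the conjecture sits in your ``steps two and three'', and those are exactly the steps you leave open. What you do establish is the easy part. The limit of a convergent trajectory of an autonomous flow is an equilibrium, so every edge satisfies $p_{xy}^\infty\,\Phi_{xy}^\infty+((P^\infty)^2)_{xy}=0$, and hence $p_{xy}^\infty=0$ forces $((P^\infty)^2)_{xy}=0$ (this is the same mechanism as Proposition \ref{prop:zeronotriangle}, and the fact that limits are curvature sharp is already Theorem \ref{thm:curvflowprops}). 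But this two-step-walk obstruction is genuinely too weak: it only produces constraints through neighbours $z$ of the problematic vertex with $p_{yz}^\infty>0$ or $p_{zx}^\infty>0$, and when the available $W$-witnesses are not adjacent to $y$ it yields nothing. Your appeal to Theorem \ref{thm:main} at $y$ to ``derive enough rigid relations on the local Schur complement'' is an intention, not an argument --- no contradiction is actually extracted, and it is precisely this globalisation of the local algebraic identities that makes the statement a conjecture rather than a proposition.

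Two smaller points. First, your claim that with step two in hand the curvature-sharpness of $(G_W,P_W)$ ``transfers verbatim'' also needs care: in Theorem \ref{thm:main} the matrix $Q(x)$ and the vector ${\bf{p}}_x$ are indexed by the combinatorial sphere $S_1(x)$ in $G$, which may contain vertices joined to $x$ by degenerate edges and vertices of $V\setminus W$; passing to $(G_W,P_W)$ changes $S_1(x)$, $S_2(x)$ and hence the Schur complement, so one must check that deleting the zero-rate rows and columns (and the $S_2$-vertices outside $W$) does not disturb the identity \eqref{eq:curv-sharp-Q}. This is plausible, since $\Delta$, $\Gamma$, $\Gamma_2$ at $x\in W$ only see transition rates out of $x$ and out of vertices reachable from $x$ with positive rate, but it is an argument you would have to write out. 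Second, note that the well-definedness and non-degeneracy of $(G_W,P_W)$ as a weighted graph themselves depend on step three (otherwise $P_W$ could carry positive rates along pairs that are not edges of $G_W$), so the ``remaining assertions'' are not independent of the gap. In short: your reduction to equilibrium identities and your conditional endgame are sensible and consistent with the paper's evidence, but the heart of the conjecture remains unproved, both by you and by the paper.
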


Figuratively speaking, the curvature flow converges towards the non-degenerate Markovian subgraph $(G_W,P_W)$ consisting of highly connected components. Moreover, each vertex of $V \setminus W$ is usually connected to the set $W$ by a sequence of edges with one-sided non-zero transition rates pointing towards the set $W$, and we generally expect that the $\infty$-curvature values of the vertex set $W$ in $(G,P^\infty)$ are significantly larger than the $\infty$-curvature values of the set $V \setminus W$ in $(G,P^\infty)$.

\medskip

In Example \ref{ex:random-graph}, the weighted Markovian subgraph $(G_W,P_W)$ has only one connected component, but we will see in the next subsection in the case of paths and cycles that $(G_W,P_W)$ may be composed of more than just one connected component. 

\subsection{Paths and cycles} \label{subsec:paths-cyles}
Let $G=(V,E)$ be a path of length $N \ge 2$, that is $V = \{v_0,\dots,v_{N-1} \}$ with a two-sided edge between $v_i $ and $v_j$ if and only if $|i-j|=1$. If $N=2$, $G$ is a trivial case of a star graph and any Markovian weigthing scheme satisfying $p_{01}=p_{21}=1$ and $p_{10}+p_{12}=1$ is curvature sharp (see \cite[Example 4.3]{CKLMPS-22}. For that reason we consider only paths of lengths $N \ge 3$. A cycle of length $3$ is the complete graph $K_3$, and a full list of all curvature sharp weighting schemes was given in \cite[Prop. 1.9]{CKLMPS-22}, so we consider only cycles of length $N \ge 4$. The following example provides some insights into some features of curvature flow limits of weighted paths and cycles.

\begin{ex}[A path and a cycle of length $12$] \label{ex:path-cycle} 

Figure \ref{fig:path-cycle-graph} presents numerical curvature flow limits of a Markovian weighted path with vertices $12$ vertices (left hand side) and of a weighed cycle with $12$ vertices (right hand side). The non-zero transition rates of the initial weighting scheme $P_0 = (p_{ji})_{0\le i,j \le 11}$ for the path limit in Figure \ref{fig:path-cycle-graph} were chosen as follows:

\medskip

\begin{center}
\begin{tabular}{lllllllllll}
$p_{0,1}$ & $p_{1,2}$ & $p_{2,3}$ & $ p_{3,4}$
& $p_{4,5}$ & $p_{5,6}$ & $p_{6,7}$ & $p_{7,8}$ & $p_{8,9}$ & $p_{9,10}$ & $p_{10,11}$ \\ 
$1$ & $.025$ & $0.72$ & $0.46$ & $0.23$ & $019$ & $0.84$ & $0.71$ & $0.62$ & $0.9$ & $0.55$ \\[.2cm]
$p_{1,0}$ & $p_{2,1}$ & $p_{3,2}$ & $ p_{4,3}$
& $p_{5,4}$ & $p_{6,5}$ & $p_{7,6}$ & $p_{8,7}$ & $p_{9,8}$ & $p_{10,9}$ & $p_{11,10}$ \\ $0.75$ & $0.28$ & $0.54$ & $0.77$ & $0.81$ & $0.16$ & $0.29$ & $0.38$ & $0.1$ & $0.45$ & $1$ \\
\end{tabular}
\end{center}

\medskip

The non-degenerate subgraph $G_W$ of the path limit consists of $W = \{v_1,v_2,v_3,v_9,v_{10},v_{11}\}$ as its vertex set together with the four green edges. Moreover, $G_W$ has two paths of length $2$ as its connected components. For each vertex $v \in V \backslash W$ there exists a directed path to one of the components of $G_W$ via a sequence of one-sided transition 
rates. For example the vertices $v_5,v_6,v_7,v_8$ are connected to the vertex $v_9 \in W$ via such one-sided paths and $v_4,v_5$ are connected to the vertex $v_3$ via such one-sided paths. 

The cycle limit on the right hand side of Figure \ref{fig:path-cycle-graph} is totally degenerate with no green edges, and all one-sided non-zero transition rates are oriented in a clockwise direction. Experiments show that Figure \ref{fig:path-cycle-graph} exhibit generic limit properties: flow limits of weighted paths are never totally degenerate and their non-degenerate subgraphs $G_W$ consist of disjoint paths of length $\le 2$. Such types of limits appear also in the case of weighted cycles. However, in contrast to the path case, sometimes a cycle limit is totally degenerate with all its one-sided transition rates oriented either clockwise or anti-clockwise. The code for running the curvature flow for paths and cycles of length $n$ reads as follows:

\begin{lstlisting}[language=Python]{}
N = 12
A = path(N)
# A = cycle(N)
P = randomizer(A)
limit = norm_curv_flow_lim(A, P)[0]
display_weighted_graph(A, P, "Initial weighted graph")
display_weighted_graph(A, limit, "Curvature flow limit")
\end{lstlisting}

\medskip

\begin{figure}[h]
\includegraphics[width=0.49\textwidth]{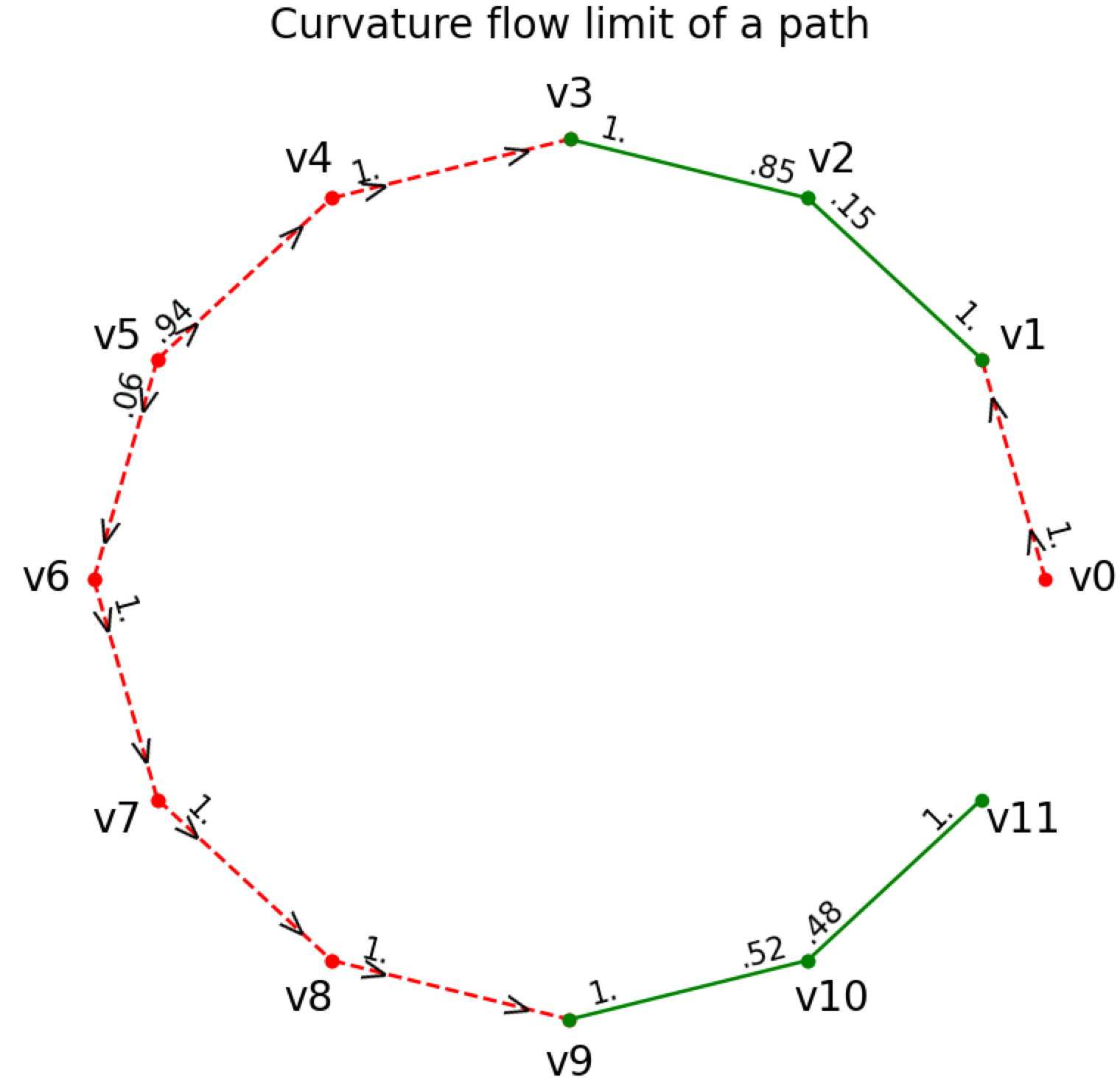}
\includegraphics[width=0.49\textwidth]{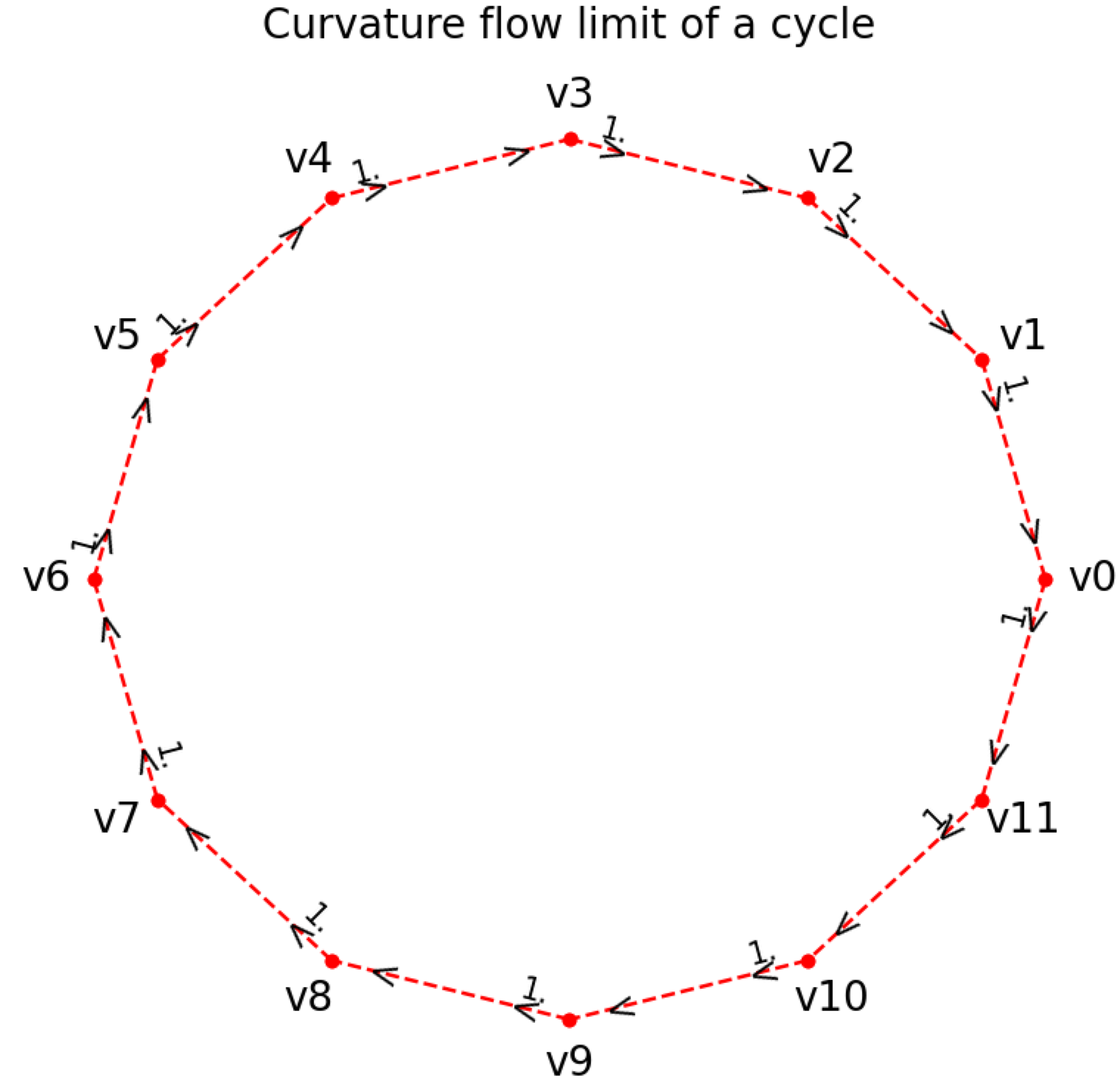}
\caption{Examples of numerical curvature flow limits of a path and of a cycle of length $12$.}
\label{fig:path-cycle-graph}
\end{figure}
\end{ex}

\FloatBarrier

Before we provide the following result about path limits, let us first introduce the notion of a two-sided degenerate edge of a weighted graphs $(G,P)$: An edge $\{x,y\} \in E$ of $G$ is called \emph{two-sided degenerate} if its transition rates vanish in both directions, that is, $p_{xy} = p_{yx} = 0$. Similarly, an edge is called \emph{numericall two-sided degenerate}, if its transition rates in both directions are $< \rm{threshold}$. Such edges are displayed in the routine {\texttt{display_weighted_graph}} as dotted black lines (see, e.g., the edges $\{v_2,v_9\}$ and $\{v_8,v_9\}$ on the right hand side of Figure \ref{fig:random-graph}).

\begin{prop}[Flow limits of paths of length $\ge 3$]
  Let $(G,P_0)$ be a weighted path of length $N \ge 3$ with consecutive vertices $v_0,\dots,v_{N-1}$. Let $P(t)$ be its corresponding curvature flow converging to a limit $P^\infty = \lim_{t \to \infty} P(t)$, such that $(G,P^\infty)$ does not have two-sided degenerate edges. Then this limit is neither totally degenerate nor is it not non-degenerate (that is, it contains both green and dashed red edges). Moreover, the components of the non-degenerate subgraph $G_W$ are paths of lengths $\le 2$ and they are separated from each other by at least two degenerate edges. If a component of $G_W$ is a path of length $1$, that is, just one edge $\{v_j,v_{j+1}\}$, then we have $p_{j,j+1}^\infty=p_{j+1,j}^\infty = 1$.  
\end{prop}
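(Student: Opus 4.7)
The strategy is to use that the limit $P^\infty$ is a fixed point of \eqref{eq:flowdiffeq-explicit} (by Theorem~\ref{thm:curvflowprops}) together with the combinatorial simplicity of paths. Introduce the shorthand $a_j := p^\infty_{v_j, v_{j-1}}$ and $b_j := p^\infty_{v_j, v_{j+1}}$; the Markovian/no-laziness assumption then gives $a_j + b_j = 1$ at interior vertices and $b_0 = a_{N-1} = 1$ at the endpoints.

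\emph{Step 1 (local fixed-point identity).} At an interior vertex $v_j$, $S_1(v_j) = \{v_{j-1}, v_{j+1}\}$ and the two neighbours are non-adjacent to each other. Hence in \eqref{eq:flowdiffeq-explicit} the double sum $\sum_{y', y''} p_{xy'} p_{y'y''}$ vanishes (also using zero laziness), the term $\sum_{y' \neq y} p_{yy'}$ vanishes, and the $(*)$-term $\sum_{y' \neq y} p_{xy'} p_{y'y}$ vanishes. Setting the right-hand side to zero on the edge $(v_j, v_{j+1})$ with $b_j > 0$ and substituting $a_j + b_j = 1$ reduces to the clean identity $a_j (b_{j-1} - a_{j+1}) = 0$. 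Symmetrically, the edge $(v_j, v_{j-1})$ with $a_j > 0$ gives $b_j (b_{j-1} - a_{j+1}) = 0$. This algebraic reduction is the main technical step; once available, the rest follows by case analysis.

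\emph{Step 2 (ends and length of a green component).} Let $C = \{v_j, \ldots, v_{j+L}\}$ be a component of $G_W$ with $L \geq 1$. Suppose $j > 0$; the left edge $\{v_{j-1}, v_j\}$ is then not green, and the no-two-sided-degenerate hypothesis forces exactly one of $a_j, b_{j-1}$ to be zero. Since $b_j > 0$, Step~1 forces $a_j = 0$: the alternative $b_{j-1} = a_{j+1} > 0$ combined with $a_j > 0$ would make the left edge green. Hence $b_j = 1$. Symmetrically, if $j + L < N - 1$, then $a_{j+L} = 1$ and $b_{j+L} = 0$. At any strictly interior vertex $v_{j+k}$ of $C$ (with $0 < k < L$), both $a_{j+k}, b_{j+k} > 0$, so Step~1 gives $b_{j+k-1} = a_{j+k+1}$. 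If $L \geq 3$, applying this at $k = 1$ with $b_j = 1$ yields $a_{j+2} = 1$ and hence $b_{j+2} = 0$, contradicting the greenness of $\{v_{j+2}, v_{j+3}\}$. Thus $L \leq 2$, and the $L = 1$ case of the last claim follows directly from the boundary analysis ($b_j = a_{j+1} = 1$, with the endpoint sub-cases automatic).

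\emph{Step 3 (separation and presence of both edge types).} If two components $C_1, C_2$ were separated by a single edge $\{v_r, v_{r+1}\}$, then Step~2 gives $b_r = 0$ (right end of $C_1$) and $a_{r+1} = 0$ (left end of $C_2$), making the separating edge two-sided degenerate, contrary to the hypothesis; thus components are separated by at least two degenerate edges. For the claim that the limit is neither totally degenerate nor non-degenerate: an all-green limit would make the whole path a single green component of length $N - 1 \geq 3$, contradicting $L \leq 2$; an all-red limit would force, inductively from $b_0 = 1$ and $a_1 = 0$, that $b_j = 1$ for all $j \leq N - 2$, but combined with $a_{N-1} = 1$ this renders $\{v_{N-2}, v_{N-1}\}$ green, again a contradiction. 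The principal obstacle throughout is Step~1, namely the careful reduction of the intricate right-hand side of \eqref{eq:flowdiffeq-explicit} to the compact identity $a_j(b_{j-1} - a_{j+1}) = 0$; all subsequent deductions are then short case analyses driven by the no-two-sided-degenerate hypothesis.
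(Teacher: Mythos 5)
Your proof is correct and is essentially self-contained where the paper's is not. The paper makes the same initial reduction (the limit is curvature sharp, equivalently a stationary point of the normalized flow), but then imports two facts from the companion paper: \cite[Prop.~1.11]{CKLMPS-22} to rule out non-degenerate limits, and the identity $p_{j-1,j}=p_{j+1,j}=p_{j,j-1}p_{j-1,j}+p_{j,j+1}p_{j+1,j}$ from the proof of \cite[Lemma~4.1]{CKLMPS-22}, valid at vertices where both outgoing rates are positive; the length-$\le 2$ bound is then obtained by an iteration argument contradicting $p_{01}=1$, and the separation and single-edge statements by separate case analyses. You instead set the right-hand side of \eqref{eq:flowdiffeq-explicit} to zero at an interior vertex of the path (the double sum, the $\sum_{y'\neq y}p_{yy'}$ term and the $(*)$ term all vanish there, and laziness is zero and preserved) and obtain the one-sided identities $a_j(b_{j-1}-a_{j+1})=0$ when $b_j>0$ and $b_j(b_{j-1}-a_{j+1})=0$ when $a_j>0$; I checked this reduction and it is right. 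These one-sided versions are slightly stronger than the both-sides-positive identity the paper quotes, and they let you read off the boundary behaviour of a green component ($b_j=1$ at its left end, $a_{j+L}=1$ at its right end), the interior propagation $b_{j+k-1}=a_{j+k+1}$, the bound $L\le 2$, the two-edge separation, the rates-equal-one claim for single-edge components, and the exclusion of an all-green limit, all from one mechanism and without ever invoking Prop.~1.11. The all-red exclusion is the same endpoint-propagation argument as in the paper.

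One caveat. Your exclusion of the all-green case asserts a ``single green component of length $N-1\ge 3$''. With the proposition's literal indexing ($N$ vertices $v_0,\dots,v_{N-1}$, $N\ge 3$) this inequality fails for $N=3$, and in that case the statement itself is false: on the $3$-vertex path (the star) every non-lazy Markovian scheme with $p_{01}=p_{21}=1$ is curvature sharp (as the paper itself recalls at the start of the subsection, and as your own stationarity identities confirm, since they become vacuous there), so a non-degenerate stationary limit with no dashed red edge exists. The paper's convention is inconsistent at this point (it excludes the star by calling it ``$N=2$'' while the proposition indexes vertices $v_0,\dots,v_{N-1}$), and its proof hides the issue inside the citation of Prop.~1.11. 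So your argument is fine for the intended setting, but you should state explicitly that it requires the path to have at least three edges, i.e.\ at least four vertices.
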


\begin{proof}
  By the last statement of Theorem \ref{thm:curvflowprops}, we only need to prove these statements for curvature sharp weighting schemes $P = (p_{ij})_{0\le i,j \le N}$ of paths of length $N \ge 3$. Such weighting schemes can never be non-degenerate by \cite[Prop. 1.11]{CKLMPS-22}. 
  
  For the proof that curvature sharp weighting schemes can never be totally degenerate, we note that we cannot have two consecutive degenerate edges $\{v_j,v_{j+1}\},\{v_{j+1},v_{j+2}\} \in E$ with $p_{j,j+1}=1=p_{j+2,j+1}$ (since this would imply $p_{j+1,j} = p_{j+1,j+2} = 0$, contradicting to $p_{j+1,j} + p_{j+1,j+2} = 1$). Therefore, since $p_{01}=1$, any totally degenerate weighting scheme would require $p_{j,j+1}=1$ for any $j \ge 0$, in particular, $p_{N-2,N-1}=1$, but this contradicts to the fact that we also have $p_{N-1,N-2}=1$ and that the edge $\{v_{N-1},v_{N-2}\}$ needs to be degenerate. 
  
  A curvature sharp weighting scheme cannot have more than two consecutive non-degenerate edges. This can be seen as follows: At any vertex $v_j$ with $p_{j,j-1},p_{j,j+1} > 0$ we must have $p_{j-1,j} = p_{j+1,j}$. This follows from the arguments in the proof of \cite[Lemma 4.1]{CKLMPS-22} (namely, since the vertex $v_j$ is not contained in any triangle, we have
  $p_{j-1,j} = p_{y+1,j} = p_{j,j-1}p_{j-1,j} + p_{j,j+1}p_{j+1,j}$.)
  If $0 < p_{j-1,j} = p_{j+1,j} < 1$, we could iterate this argument backward and forward and would end up with the fact that all entries of the matrix $P$ above the diagonal and below the diagonal would lie in $(0,1)$, which is a contradiction to $p_{01}=1$. So we must have $p_{j-1,j}=p_{j+1,j} \in \{0,1\}$. Therefore, we cannot have two consecutive indices $j \in \{0,\dots,N-1\}$ with $0 < p_{j-1,j} = p_{j+1,j} < 1$ which would exist in the case of three consecutive non-degenerate edges. So the components of $G_W$ are paths of length $\le 2$. 
  
  Moreover, any gap between two consecutive non-degenerate edges must be at least two edges: this follows from the fact that if a non-degenerate edge $\{v_j,v_{j+1}\}$ is followed by a degenerate edge $\{v_{j+1},v_{j+2}\}$, then we must have $p_{j+1,j+2}=0$: if we had $p_{j+1,j+2}>0$, then we had $p_{j+1,j},p_{j+1,j+2} > 0$ and, therefore $0 < p_{j+1,j} = p_{j+2,j+1}$ and $\{v_{j+1},v_{j+2}\}$ would be non-degenerate, which is a contradiction. Similarly, if a degenerate edge $\{v_k,v_{k+1}\}$ is followed by a non-degenerate edge $\{v_{k+1},v_{k+2}\}$, we must have $p_{k+1,k} = 0$. Combining both facts implies that there cannot be a single degenerate edge separating two components of $G_W$. These arguments show also that components of $G_W$ which are single edges $\{v_j,v_{j+1}\}$ must satisfy $p_{j,j+1}=p_{j+1,j}=1$ since the adjacent degenerate edges have one-sided transition rates pointing towards this component.
\end{proof}

Similar arguments as above can be used to prove for cycles that the components of any flow limit of a weighted cycle are again paths of length $\le 2$, unless the limit is non-degenerate. Such non-degenerate limits exist for cycles, namely, the simple random walks, but experiments show that simple random walks are very unstable stationary solutions of the curvature flow. Small perturbations of simple random walks do not converge back to the simple random walk (unless our cycle is $K_3$) but converge usually to a degenerate limit. 
Finally, if a totally degenerate cycle limit does not have two-sided degenerate edges, then its transition rates must all be either oriented clockwise or anti-clockwise for, otherwise, we would necessarily have a vertex with transition rates of both incident degenerate edges pointing towards this vertex. This would fail to satisfy the Markovian condition at this vertex.

\bigskip  

Paths and cycles of length $N \ge 3$ have the property that no edge is contained in a triangle. We like to finish section by a general statement about the curvature flow for edges not contained in triangles.

\begin{prop} \label{prop:zeronotriangle}
  Let $(G,P_0)$ be a weighted Markovian graph without laziness and $(P(t))_{t \ge 0}$ be its associated normalized curvature flow. If we have, for some $t_0 \ge 0$ and an edge $e = \{x,y\} \in E$, $p_{xy}(t_0)=0$ and $e$ is not contained in a triangle of $G$, then we have
  $$ p_{xy}(t) = 0 \quad \text{for all $t \ge t_0$}. $$
\end{prop}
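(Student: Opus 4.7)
The plan is to read off from the explicit flow equation \eqref{eq:flowdiffeq-explicit} a scalar linear homogeneous ODE for $p_{xy}(t)$ under the triangle-free hypothesis, so that the initial condition $p_{xy}(t_0)=0$ propagates to $p_{xy}(t) \equiv 0$ for all $t \ge t_0$.

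First I would split the right-hand side of \eqref{eq:flowdiffeq-explicit} at the edge $\{x,y\}$ as
$$ p_{xy}'(t) \;=\; p_{xy}(t)\, H_x(t) \;+\; \sum_{y' \in S_1(x),\, y' \neq y} p_{xy'}(t)\, p_{y'y}(t), $$
where $H_x(t)$ stands for the entire large bracketed factor; call the rightmost sum $(\ast)$. The first summand vanishes automatically whenever $p_{xy}(t)=0$, so the whole argument reduces to showing that the inhomogeneous term $(\ast)$ vanishes identically along the flow.

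Next I would use the triangle-freeness of $e=\{x,y\}$ to kill $(\ast)$ summand by summand. The curvature flow does not alter the underlying combinatorial graph $G$, so the index $y'$ ranges over a fixed set of vertices of $G$ adjacent to $x$ and distinct from $y$. Whenever $p_{y'y}(t) > 0$, there must be an edge from $y'$ to $y$ in $G$ (since transition rates can be positive only along edges); together with the edge from $x$ to $y'$ (implied by $y' \in S_1(x)$) and the edge $\{x,y\}=e$, this produces a triangle in $G$ containing $e$, contradicting the hypothesis. Hence $p_{y'y}(t) = 0$ for every $y' \in S_1(x) \setminus \{y\}$ and every $t \ge 0$, and therefore $(\ast) \equiv 0$.

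The flow equation for $p_{xy}$ thus reduces to the scalar linear homogeneous ODE $p_{xy}'(t) = H_x(t)\, p_{xy}(t)$ along the Markovian solution provided by Theorem \ref{thm:curvflowprops}, with $H_x(t)$ a smooth function of $t$. The explicit solution formula $p_{xy}(t) = p_{xy}(t_0) \exp\bigl(\int_{t_0}^t H_x(s)\,ds\bigr)$, together with the initial value $p_{xy}(t_0) = 0$, yields $p_{xy}(t) = 0$ for all $t \ge t_0$; equivalently, uniqueness of solutions to the scalar ODE applied to the trivial solution suffices. I do not expect a real obstacle here; the only conceptual content is recognising that the additive term in \eqref{eq:flowdiffeq-explicit} is exactly the ``transition-via-a-common-neighbour'' contribution from $x$ to $y$, which the triangle-free hypothesis removes.
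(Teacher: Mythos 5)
Your proposal is correct and follows essentially the same route as the paper's proof: both observe that the triangle-free hypothesis forces the inhomogeneous term $(\ast)$ in \eqref{eq:flowdiffeq-explicit} to vanish identically, reducing the equation for $p_{xy}$ to a linear homogeneous ODE, so that $p_{xy}(t_0)=0$ propagates by uniqueness of solutions. The extra detail you give (the explicit exponential solution formula) is a harmless elaboration of the same argument.
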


\begin{proof}
  This proposition is an easy consequence of the flow equation \eqref{eq:flowdiffeq-explicit}. Since we assume no laziness, we have $p_{yy}(t)=0$, and since $e$ is not contained in a triangle, the last term of \eqref{eq:flowdiffeq-explicit}, denoted by $(*)$, is zero and the statement follows now from the uniqueness of the solution satisfying $p_{xy}(t_0)=0$.
\end{proof}

 \subsection{Complete graphs} 
 
The simple random walk on any complete graph is a non-degenerate curvature sharp Markovian weighting scheme. Our experiments show that any non-degenerate initial weighting scheme $P_0$ on $K_n$ converges to the simple random walk, that is $p_{jk}^\infty = \frac{1}{n-1}$. The following example shows that convergence to the simple random walk appears even if the initial weighting scheme is degenerate. This is not in contradiction ot Proposition \ref{prop:zeronotriangle} since, in a complete graph $K_n$, $n \ge 3$, every edge is contained in a triangle. Example \ref{ex:complete} is the only exception in this section where we allow an initial weighting scheme to have degenerate edges. 

\begin{ex}[A degenerate weighted complete graph with 6 vertices]
\label{ex:complete}
Let $(G,P_0)$ be the complete weighted Markovian graph with vertex set $V=\{v_0,\dots,v_5\}$ and
$$ P_0 = \begin{pmatrix} 0 & 0.2 & 0.1 & 0.2& 0 & 0.5 \\
0.1 & 0 & 0.3 & 0.25 & 0.25 & 0.1 \\
0.2 & 0 & 0 & 0.3 & 0.15 & 0.35 \\
0.3 & 0.5 & 0.1 & 0 & 0.1 & 0 \\
0.2 & 0.3 & 0.3 & 0.2 & 0 & 0 \\
0.6 & 0.1 & 0.1 & 0.2 & 0 & 0 \end{pmatrix}, $$
as illustrated in Figure \ref{fig:complete-graph}. Note that the edges $\{v_0,v_4\}, \{v_1,v_2\}, \{v_3,v_5\}$ and $\{v_4,v_5\}$ of this initial weighting scheme are degenerate, with the latter being two-sided degenerate. The numerical curvature flow has numerical convergence time $t_{\rm{max}} = 18.5$ (with respect to ${\rm{lim}}_{\rm{tolerance}} = 0.001$) with the simple random walk as its numerical flow limit. Figure \ref{fig:complete-graph-trans-rates} presents the transition rates of the vertices $v_2$ and $v_5$. Particular interesting are the functions $p_{2,1}(t)$ and $p_{5,4}(t)$ for $t \in [0,t_{\rm{max}}]$, since their initial values are zero. 

\begin{figure}[h]
\includegraphics[width=0.49\textwidth]{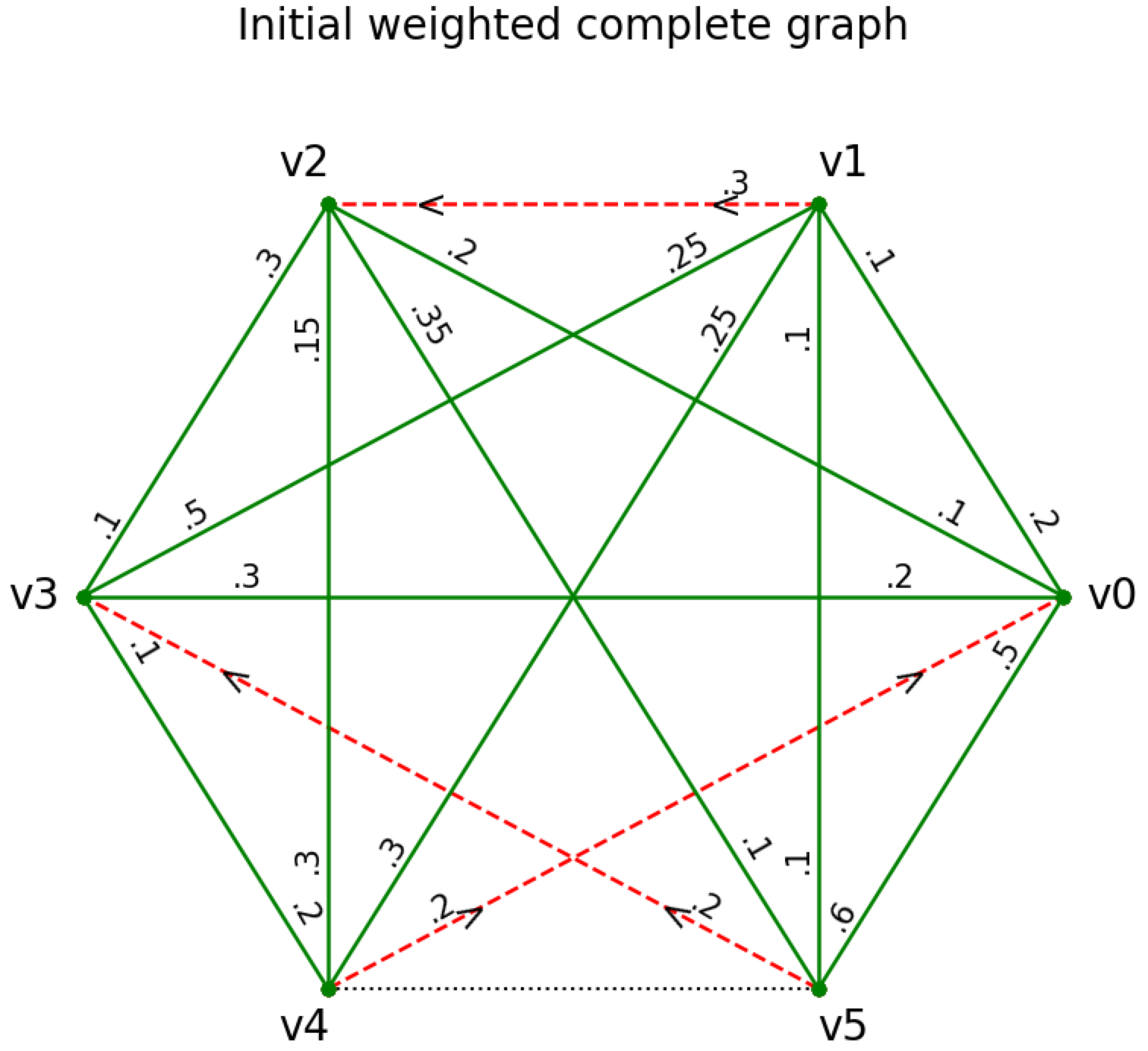}
\includegraphics[width=0.49\textwidth]{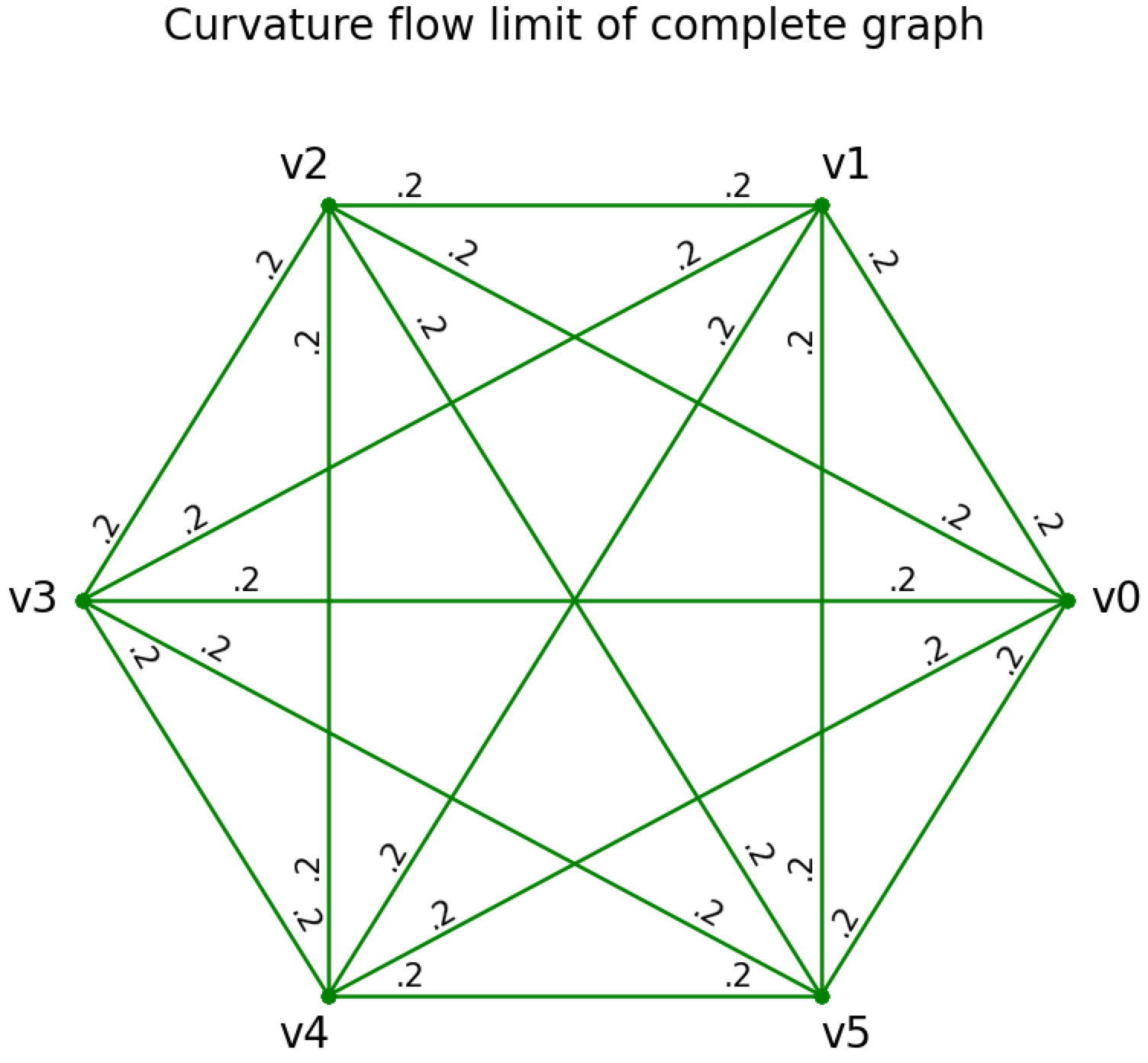}
\caption{Curvature flow of a complete graph with $6$ vertices with degenerate initial weighting scheme $P_0$ (left hand side) and numerical flow limit $P(t_{\rm{max}})$, the simple random walk (right hand side)}
\label{fig:complete-graph}
\end{figure}

\FloatBarrier

\begin{figure}[h]
\includegraphics[width=0.49\textwidth]{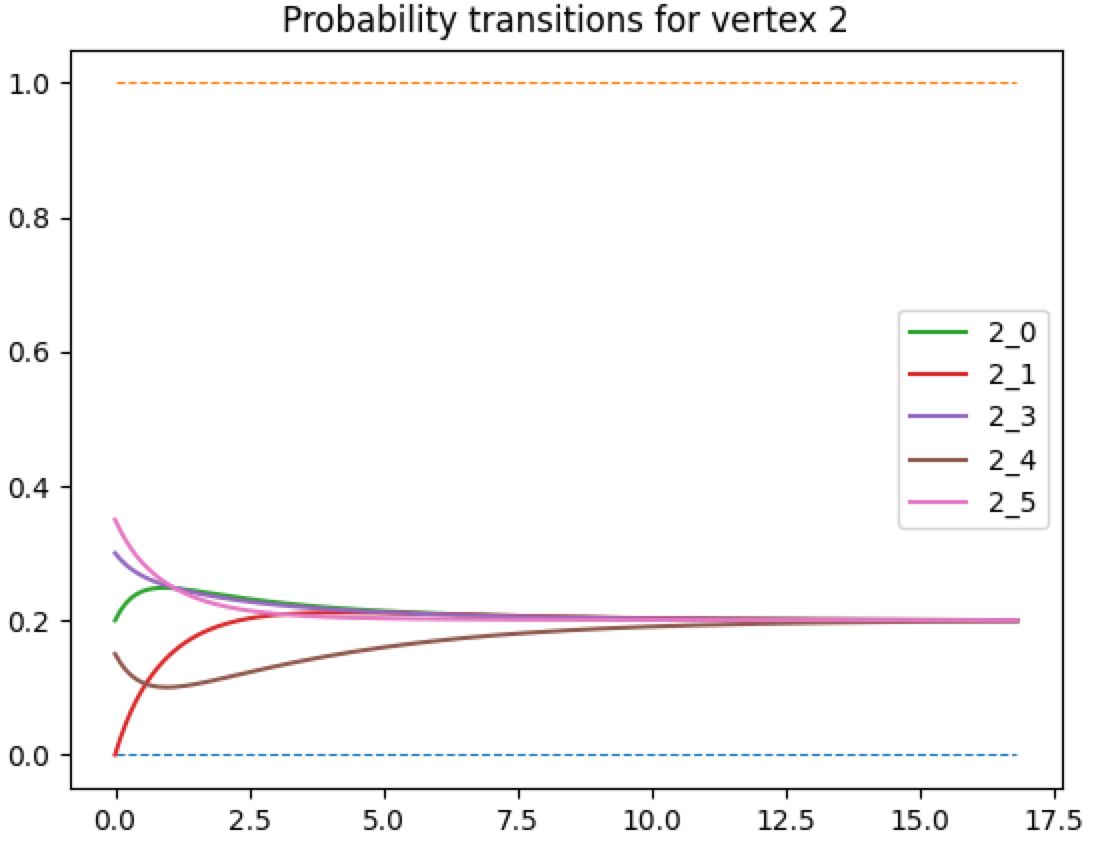}
\includegraphics[width=0.49\textwidth]{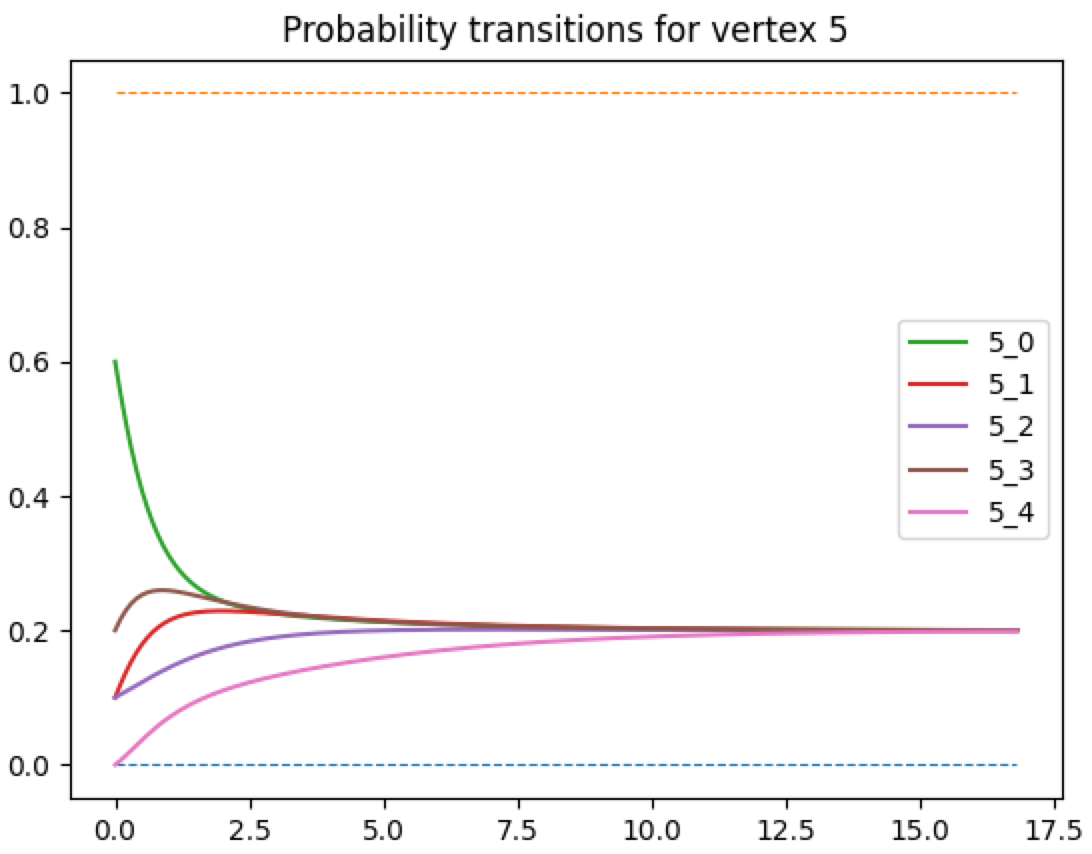}
\caption{Transition rates of vertices $v_2$ and $v_5$ of a complete graph with $6$ vertices under the curvature flow}
\label{fig:complete-graph-trans-rates}
\end{figure}
\end{ex}
	
	
Based on our experiments, we conjecture the following, which is a strengthening of \cite[Conjecture 1.8]{CKLMPS-22}.

\begin{conj}[Curvature flow of complete graphs] \label{conj:complgraph}
Let $P_0$ be a Markovian weighting scheme without laziness on a complete graph $K_n=(V,E)$ with $n \ge 2$ such that, for every proper subset $W \subset V$, there exist $x,x' \in W$ and $y,y' \in \in V \setminus W$ with $p_{xy} > 0$ and $p_{y'x'} > 0$. Then the curvature flow has a limit $P^\infty$ which is the simple random walk.  
\end{conj}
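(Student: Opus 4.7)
The plan is to combine two main ingredients: propagation of the irreducibility hypothesis along the flow, and classification of non-degenerate curvature sharp Markovian weightings on $K_n$.

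First, I would note that the stated hypothesis on $P_0$ is equivalent to irreducibility of the associated Markov chain, i.e.\ strong connectivity of the directed graph with edges $(x,y)$ satisfying $p_{xy}>0$. The aim is to show that this irreducibility is not merely preserved by the flow but strengthens to full non-degeneracy: for every $t>0$ one has $p_{xy}(t)>0$ for all distinct $x,y\in V$. The key tool is the explicit flow equation \eqref{eq:flowdiffeq-explicit}, whose source term $\sum_{y'\neq y}p_{xy'}(t)p_{y'y}(t)$ is strictly positive as soon as a directed length-two path $x\to y'\to y$ has positive probability. Because every edge of $K_n$ (for $n\ge 3$) lies on a triangle, one can run a cascade argument starting from an irreducible $P_0$: given a directed path $x=v_0\to v_1\to\cdots\to v_k=y$ of positive probability, a repeated Gr\"onwall-type estimate applied to the coordinate ODE $p_{v_iv_{i+2}}'(t)=A(t)\,p_{v_iv_{i+2}}(t)+(\ast)(t)$ produces positive length-two shortcuts, and eventually positivity of $p_{xy}(t)$ for every $t>0$. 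Once $p_{xy}$ becomes positive, it stays positive, since the linear coefficient in its ODE cannot drive a positive quantity to zero in finite time. The case $n=2$ is trivial.

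Second, I would apply Theorem \ref{thm:main}: any limit $P^\infty=\lim_{t\to\infty}P(t)$ is Markovian and curvature sharp at every vertex. For the complete graph the structure simplifies dramatically, since $S_2(x)=\emptyset$ for every $x$ and the Schur complement collapses, so $Q(x)=\Gamma_2(x)_{S_1}$ is an explicit polynomial matrix in the $p_{xy}$. The curvature sharpness condition \eqref{eq:curv-sharp-Q} then becomes a finite system of polynomial equations in the transition probabilities. I would aim to show that the simple random walk $p_{xy}=1/(n-1)$ is the unique solution of this system among \emph{non-degenerate} Markovian weightings without laziness on $K_n$. Other curvature sharp weightings do exist in general (cf.\ \cite[Prop.~1.9]{CKLMPS-22} for $K_3$), but they carry zero entries and are therefore ruled out by the non-degeneracy conclusion of the first step. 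For uniqueness, the intended method is to exploit vertex transitivity of $K_n$ together with rigidity of the polynomial system: symmetrising under the permutation group applied to a non-degenerate curvature sharp weighting should force all entries to coincide, yielding the simple random walk.

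The remaining task is convergence of the flow itself, as opposed to subsequential convergence. A natural strategy is to construct a Lyapunov functional such as the quadratic deviation
\begin{equation*}
L(P)=\sum_{x\neq y}\left(p_{xy}-\tfrac{1}{n-1}\right)^{2},
\end{equation*}
and verify that $\tfrac{d}{dt}L(P(t))\le 0$ along the flow for non-degenerate $P(t)$, with equality only at the simple random walk; a LaSalle invariance argument combined with the uniqueness conclusion above then delivers the conjecture. The main obstacle I anticipate lies precisely here: the normalisations $C_x(t)=K_{P(t),\infty}^{d(x,\cdot)}(x)$ are vertex-dependent nonlinear functions of the entire weighting scheme, so standard entropy-dissipation or Bakry-\'Emery-type monotonicity arguments do not apply directly, and a hands-on algebraic verification of $L'\le 0$ from \eqref{eq:flowdiffeq-explicit} restricted to $K_n$ looks technically delicate. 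Should a global Lyapunov functional resist discovery, a fallback is a linear stability analysis at the simple random walk: compute the Jacobian of the flow at $p_{xy}=1/(n-1)$, show that all relevant eigenvalues are strictly negative, and combine this with the uniqueness and non-degeneracy results to conclude convergence from any irreducible initial datum in the basin of attraction, followed by a separate global argument using that non-degeneracy and irreducibility rule out all other equilibria.
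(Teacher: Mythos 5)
This statement is posed in the paper as a \emph{conjecture}: the paper offers no proof, only numerical evidence (flow experiments on complete graphs, the computed asymptotic stability of the simple random walk on $K_3$ and $K_4$, and a conjectured eigenvalue structure of the linearisation for general $n$), so there is no argument of the authors to compare yours against. Your proposal should therefore be judged as a research programme, and as such it has concrete gaps that prevent it from being a proof. The most serious one is the hand-off from your first step to your second: showing $p_{xy}(t)>0$ for all finite $t$ (which your cascade/Gr\"onwall argument plausibly gives, and which for non-degenerate $P_0$ is already in Theorem \ref{thm:curvflowprops}) does \emph{not} imply that the limit $P^\infty$ is non-degenerate. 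The paper stresses exactly this point: flow limits of non-degenerate Markovian schemes are ``in most cases no longer non-degenerate'' even though every $P(t)$ is. So degenerate curvature sharp equilibria on $K_n$ (which do exist, and which on wedge sums and other graphs are precisely the limits one observes) are not ruled out as candidate limits by your step one; you would need lower bounds on the entries that are uniform in $t$, and that is where the real difficulty of the conjecture sits. Relatedly, your uniqueness claim --- that the simple random walk is the only non-degenerate curvature sharp Markovian scheme without laziness on $K_n$ --- is supported only by a ``symmetrisation'' heuristic; averaging a solution of the nonlinear polynomial system \eqref{eq:curv-sharp-Q} over the automorphism group neither produces a solution nor shows the original solution equals its average, so this step is unproven (and is itself close in strength to the statement you want).

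Finally, the convergence step is missing, as you acknowledge. The candidate Lyapunov functional $L(P)=\sum_{x\neq y}(p_{xy}-\tfrac{1}{n-1})^2$ is not verified to be monotone along the normalized flow, and there is no evidence in the paper that it is; the fallback of linearising at the simple random walk only yields \emph{local} asymptotic stability (this is exactly what the paper does numerically for small $n$ and then conjectures in general), while the ``separate global argument'' you defer to is precisely the open content of the conjecture --- note that even existence of the limit for arbitrary initial data is itself an open conjecture in the paper. So your outline identifies reasonable sub-problems, but each of the three pillars (uniform non-degeneracy up to $t=\infty$, uniqueness of the non-degenerate curvature sharp scheme, global convergence) is currently unestablished, and the first of these is stated in a form that conflicts with a phenomenon the paper explicitly documents.
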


\FloatBarrier

\subsection{Wedge sums of complete graphs}
Let $G_1=(V_1,E_1)$ and $G_2=(V_2,E_2)$ be two combinatorial graphs and $x_1 \in V_1$ and $x_2 \in V_2$. By merging the
vertices $x_1$ and $x_2$ into one new vertex $x$ which inherits
the incident edges of both vertices $x_1$ and $x_2$, we obtain a new combinatorial graph, which we denote the wedge sum of $G_1$ and $G_2$. In this subsection, we consider flow limits of wedge sums of complete graphs. 

\begin{ex}[A wedge sum of a $K_4$, $K_5$, $K_2$ and $K_3$] We consider the curvature flow on the wedge sum $G=(V,E)$ of complete graphs presented in Figure \ref{fig:wedge-sum} with random initial weighting schemes $P = P_0$. The adjacency matrix $A$ of this graph is generated with our code in the following way:
\begin{lstlisting}[language=Python]{}
A1 = wedge_sum(complete(4), complete(5), 2, 1)
A2 = wedge_sum(A1, complete(2), 6, 0)
A = wedge_sum(A2, complete(3), 8, 0)
\end{lstlisting}

\begin{figure}[h]
\includegraphics[width=.5\textwidth]{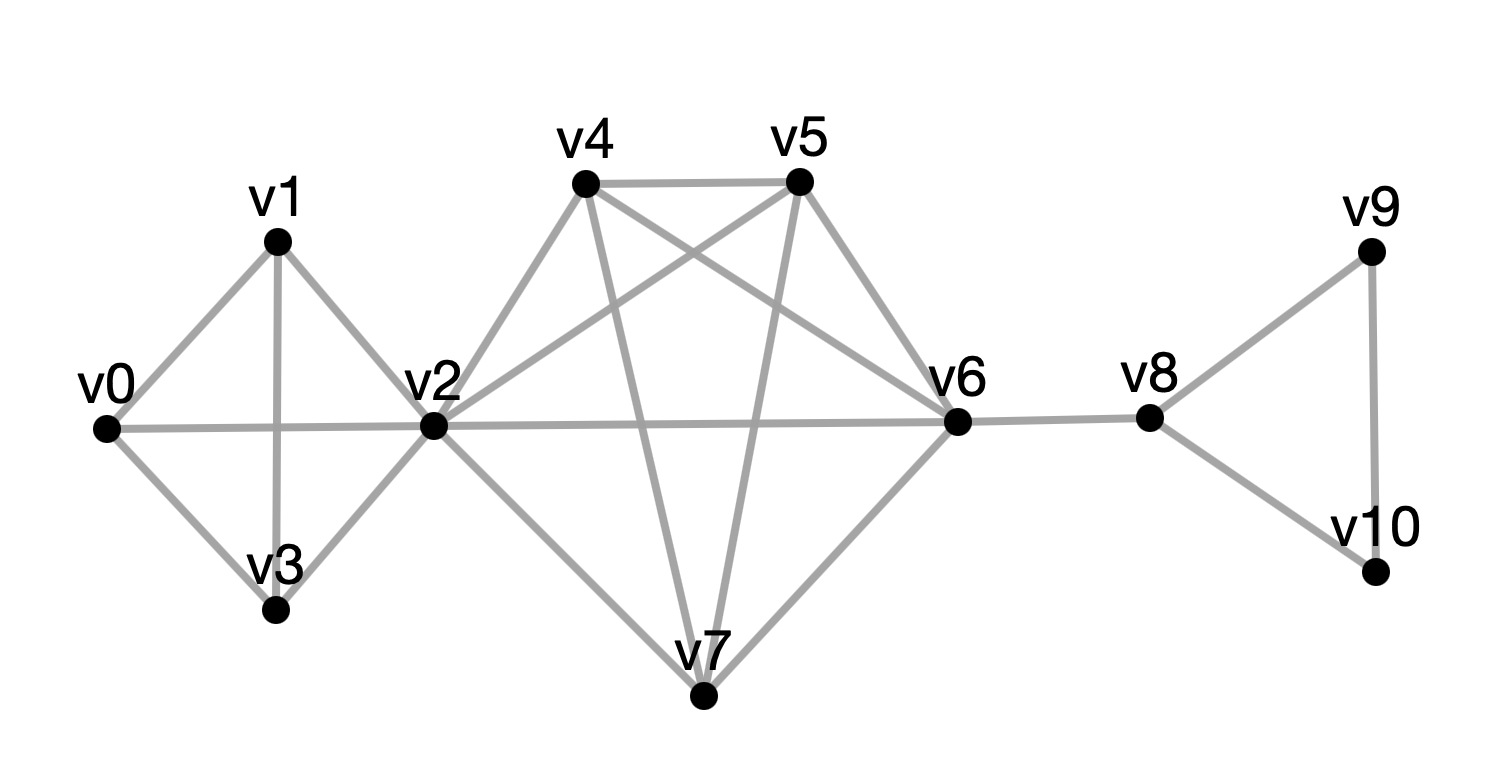}
\caption{A wedge sum of a $K_4$, $K_5$, $K_2$ and a $K_3$}
\label{fig:wedge-sum}
\end{figure}

\begin{figure}
\begin{minipage}[h]{0.49\textwidth}
{\scriptsize{\begin{center}
\begin{multline*} 
P_0 = \\ \begin{pmatrix} 
0 & 0.4 & 0.5 & 0.1 & 0 & 0 & 0 & 0 & 0 & 0 & 0 \\
0.1 & 0 & 0.4 & 0.5 & 0 & 0 & 0 & 0 & 0 & 0 & 0 \\
0.1 & 0.1 & 0 & 0.3 & 0.1 & 0.1 & 0.1 & 0.2 & 0 & 0 & 0 \\
0.2 & 0.5 & 0.3 & 0 & 0 & 0 & 0 & 0 & 0 & 0 & 0 \\
0 & 0 & 0.1 & 0 & 0 & 0.1 & 0.3 & 0.5 & 0 & 0 & 0 \\
0 & 0 & 0.1 & 0 & 0.1 & 0 & 0.4 & 0.4 & 0 & 0 & 0 \\
0 & 0 & 0.2 & 0 & 0.3 & 0.1 & 0 & 0.1 & 0.3 & 0 & 0 \\
0 & 0 & 0.5 & 0 & 0.1 & 0.3 & 0.1 & 0 & 0 & 0 & 0 \\
0 & 0 & 0 & 0 & 0 & 0 & 0.1 & 0 & 0 & 0.8 & 0.1 \\
0 & 0 & 0 & 0 & 0 & 0 & 0 & 0 & 0.5 & 0 & 0.5 \\
0 & 0 & 0 & 0 & 0 & 0 & 0 & 0 & 0.7 & 0.3 & 0 \end{pmatrix}
\end{multline*}
\end{center}}}
\medskip
\end{minipage}
\begin{minipage}[h]{0.49\textwidth}
\begin{center}
\includegraphics[width=\textwidth]{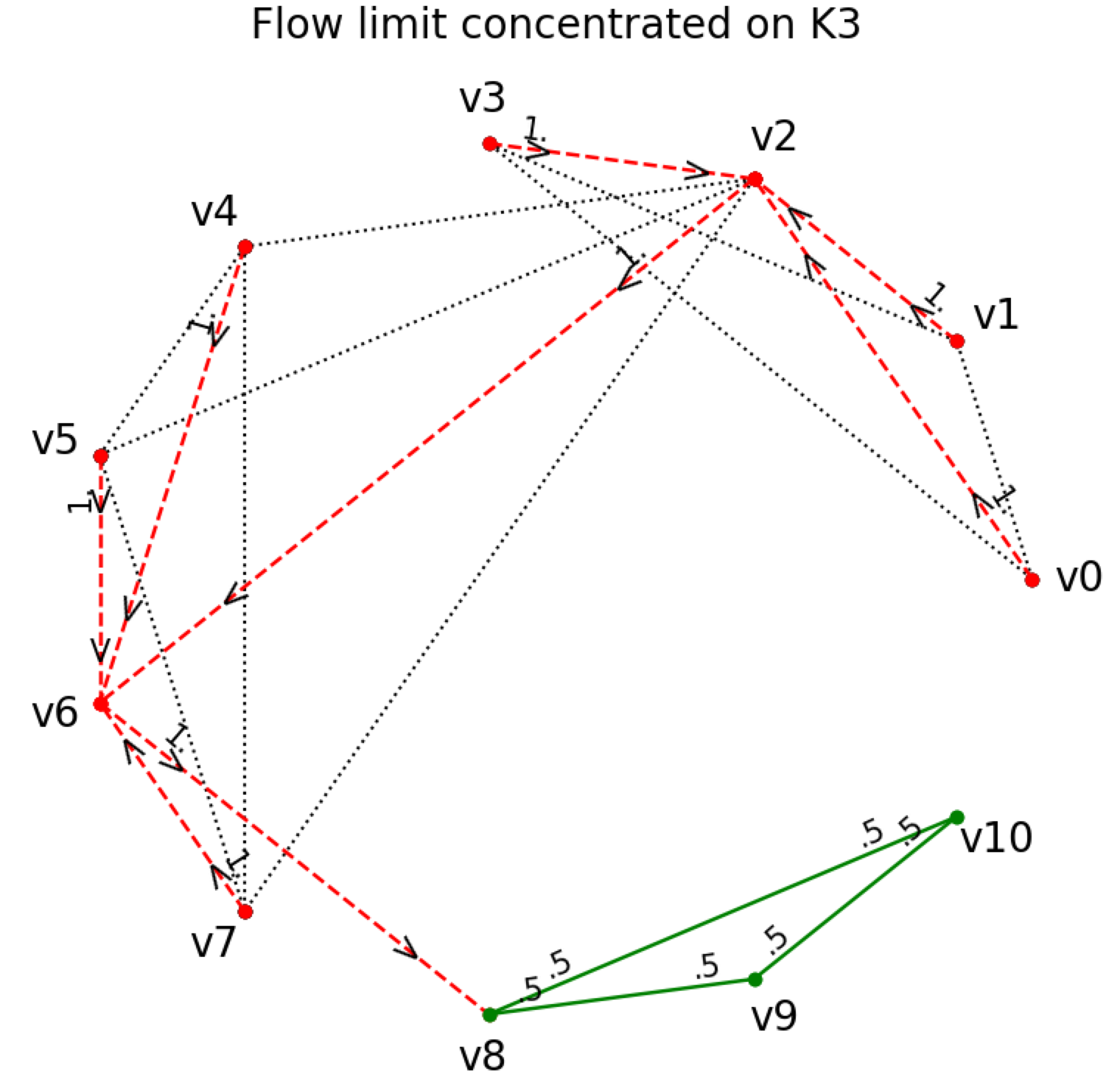}
\end{center}
\medskip
\end{minipage}
\caption{Initial weighting scheme (left) and numerical flow limit a simple random walk on $K_3$ (right)}
\label{fig:k3-winner}
\end{figure}

\begin{figure}
\begin{minipage}[h]{0.49\textwidth}
{\scriptsize{\begin{center}
\begin{multline*} 
P_0 = \\ \begin{pmatrix} 
0 & 0.3 & 0.3 & 0.4 & 0 & 0 & 0 & 0 & 0 & 0 & 0 \\
0.1 & 0 & 0.4 & 0.5 & 0 & 0 & 0 & 0 & 0 & 0 & 0 \\
0.3 & 0.2 & 0 & 0.1 & 0.1 & 0.1 & 0.1 & 0.1 & 0 & 0 & 0 \\
0.4 & 0.4 & 0.2 & 0 & 0 & 0 & 0 & 0 & 0 & 0 & 0 \\
0 & 0 & 0.3 & 0 & 0 & 0.1 & 0.1 & 0.5 & 0 & 0 & 0 \\
0 & 0 & 0.2 & 0 & 0.2 & 0 & 0.5 & 0.1 & 0 & 0 & 0 \\
0 & 0 & 0.4 & 0 & 0.1 & 0.1 & 0 & 0.3 & 0.1 & 0 & 0 \\
0 & 0 & 0.7 & 0 & 0.1 & 0.1 & 0.1 & 0 & 0 & 0 & 0 \\
0 & 0 & 0 & 0 & 0 & 0 & 0.5 & 0 & 0 & 0.4 & 0.1 \\
0 & 0 & 0 & 0 & 0 & 0 & 0 & 0 & 0.4 & 0 & 0.6 \\
0 & 0 & 0 & 0 & 0 & 0 & 0 & 0 & 0.1 & 0.9 & 0 \end{pmatrix}
\end{multline*}
\end{center}}}
\medskip
\end{minipage}
\begin{minipage}[h]{0.49\textwidth}
\begin{center}
\includegraphics[width=\textwidth]{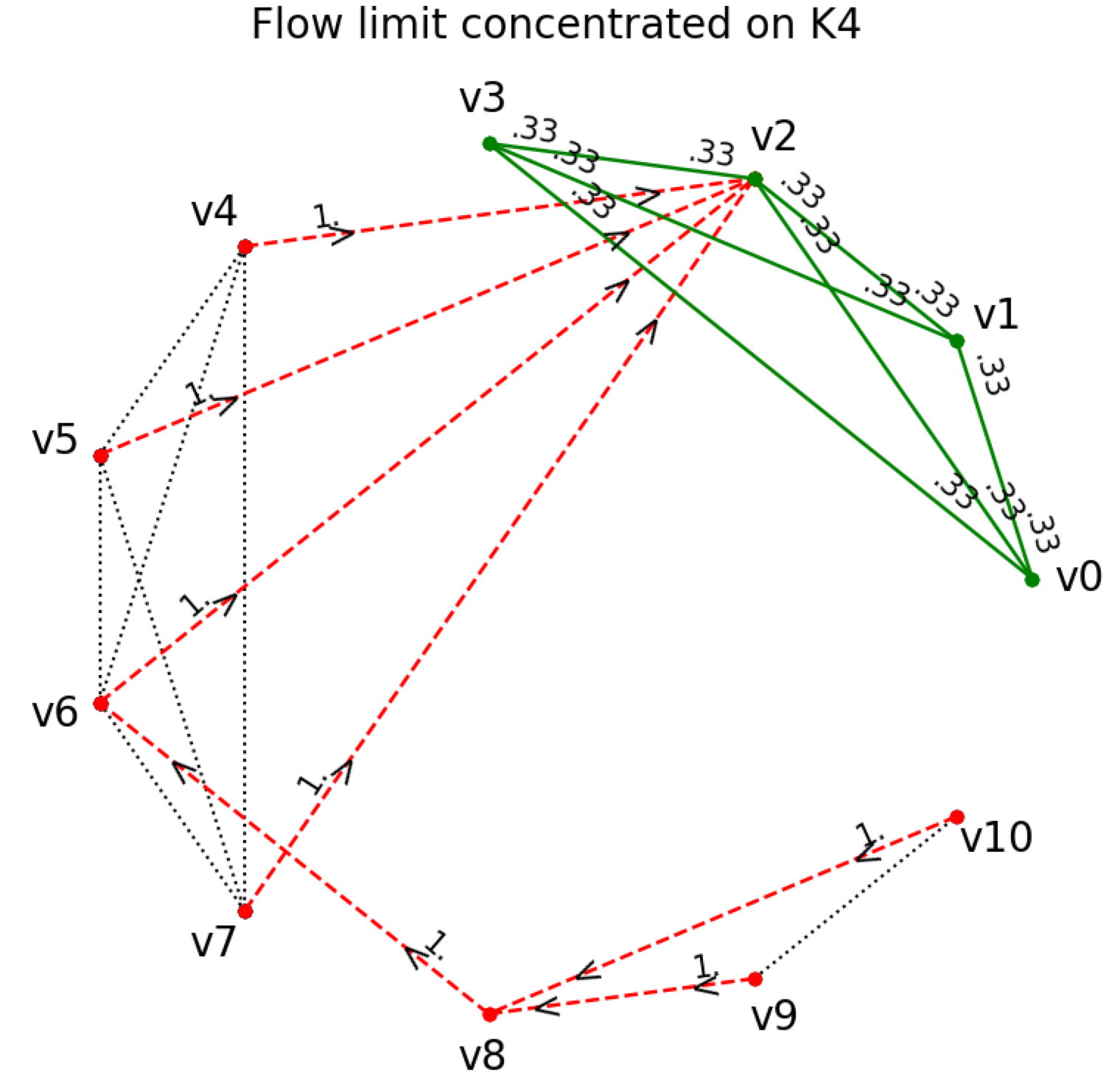}
\end{center}
\medskip
\end{minipage}
\caption{Initial weighting scheme (left) and numerical flow limit a simple random walk on $K_4$ (right)}
\label{fig:k4-winner}
\end{figure}

\FloatBarrier

Our experiments show that, depending on the initial weighting scheme $P_0$, the curvature flow converges to a limit which is concentrated in one of the complete graphs. More precisely, the limit weighting scheme $P^\infty$ represents a simple random walk on one of the complete graphs while, from all other vertices, there is a directed path of $\{0,1\}$ transition rates towards this particular complete graph. Figures \ref{fig:k3-winner}, \ref{fig:k4-winner} and \ref{fig:k5-winner} show the numerical curvature flow limits of various random initial weighting schemes. We carried out several runs of $100000$ numerical curvature flows with random initial weighting schemes to describe the limit behaviour of these flows quantitatively. The results are presented in Table \ref{tab:k4k5k2k3-statistics}. While more than $80\%$ of limits concentrate on the largest clique $K_5$, it is somewhat surprising that more limits concentrate on $K_3$ than on the larger subgraph $K_4$. Not a single flow limit ended up concentrating on $K_2$. The mean numerical convergence time is shortest for $K_3$, followed by $K_4$ and $K_5$ (with respect to $\lim_{\rm{tolerance}} = 0.001$). While most convergence times are below 100, there were maximal numerical convergence times well above 500.

\begin{figure}
\begin{minipage}[h]{0.49\textwidth}
{\scriptsize{\begin{center}
\begin{multline*} 
P_0 = \\ \begin{pmatrix} 
0 & 0.2 & 0.2 & 0.6 & 0 & 0 & 0 & 0 & 0 & 0 & 0 \\
0.2 & 0 & 0.3 & 0.5 & 0 & 0 & 0 & 0 & 0 & 0 & 0 \\
0.1 & 0.1 & 0 & 0.2 & 0.1 & 0.1 & 0.2 & 0.2 & 0 & 0 & 0 \\
0.3 & 0.3 & 0.4 & 0 & 0 & 0 & 0 & 0 & 0 & 0 & 0 \\
0 & 0 & 0.1 & 0 & 0 & 0.2 & 0.3 & 0.4 & 0 & 0 & 0 \\
0 & 0 & 0.2 & 0 & 0.4 & 0 & 0.2 & 0.2 & 0 & 0 & 0 \\
0 & 0 & 0.3 & 0 & 0.2 & 0.1 & 0 & 0.2 & 0.2 & 0 & 0 \\
0 & 0 & 0.5 & 0 & 0.1 & 0.3 & 0.1 & 0 & 0 & 0 & 0 \\
0 & 0 & 0 & 0 & 0 & 0 & 0.6 & 0 & 0 & 0.2 & 0.2 \\
0 & 0 & 0 & 0 & 0 & 0 & 0 & 0 & 0.5 & 0 & 0.5 \\
0 & 0 & 0 & 0 & 0 & 0 & 0 & 0 & 0.3 & 0.7 & 0 \end{pmatrix}
\end{multline*}
\end{center}}}
\medskip
\end{minipage}
\begin{minipage}[h]{0.49\textwidth}
\begin{center}
\includegraphics[width=\textwidth]{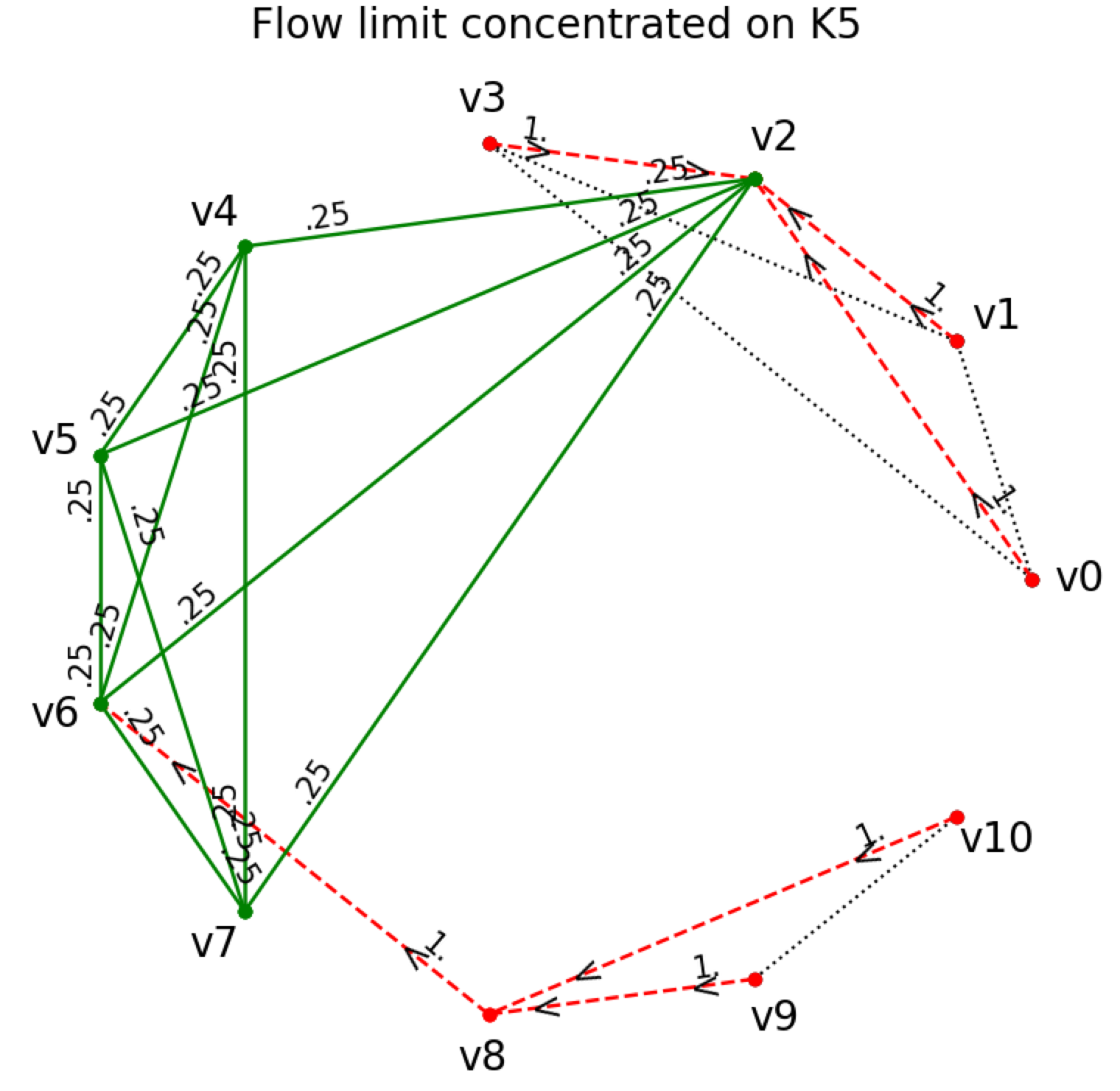}
\end{center}
\medskip
\end{minipage}
\caption{Initial weighting scheme (left) and numerical flow limit a simple random walk on $K_5$ (right)}
\label{fig:k5-winner}
\end{figure}
\FloatBarrier
\begin{table}[h]
    \centering
    \begin{tabular}{l|llll}
         & $K_4$ & $K_5$ & $K_2$ & $K_3$ \\ \hline\\[-.3cm]
    concentration of flow limits & $8.7\%$ & $80.5\%$ & $0\%$ & $10.8\%$ \\
    mean convergence time & $18.9$ & $24.7$ & -- & $17.7$ \\[.2cm]
    \end{tabular}
    \caption{Statistics of flow limit concentrations on the complete subgraphs $K_4,K_5,K_2$ and $K_3$ of their wedge sum, together with mean convergence times}
    \label{tab:k4k5k2k3-statistics}
\end{table}
\end{ex}

The limit behaviour described in the above example seems to be common for many wedge sums of complete graphs. It is, however, not always true that flow limits concentrate on just one of the constituents of a wedge sum. A path of length $\ge 2$ can be viewed as a wedge sum of consecutive $K_2$'s, and we have seen in Subsection \ref{subsec:paths-cyles} that flow limits will concentrate on more than only one of these $K_2$'s (see left hand side of Figure \ref{fig:path-cycle-graph}). Another special case of a wedge sum is a dumbbell which is our next example.

\begin{ex}[A symmetric weighted dumbbell] The weighted graph $(G,P_0)$ in this example is a wedge sum of a $K_5$, $K_2$ and another $K_5$, together with a simple random walk as initial weighting scheme (see line 4 in the code). This situation can be set up by the following code:
\begin{lstlisting}[language=Python]{}
A1 = complete(5)
A2 = complete(5)
A = bridge_at(A1,A2,0,0)
P_0 = srw(A)
\end{lstlisting}
The numerical convergence time is $79$ and the limit of the numerical curvature flow concentrates on the ``bridge'' $K_2$ between the two $K_5$'s, as illustrated in Figure \ref{fig:dumbbell-graph}. (To obtain the flow limit illustrated at the right hand side of this figure, users should choose $\lim_{\rm{tolerance}} = 0.0001$.)

\begin{figure}[h]
\includegraphics[width=0.49\textwidth]{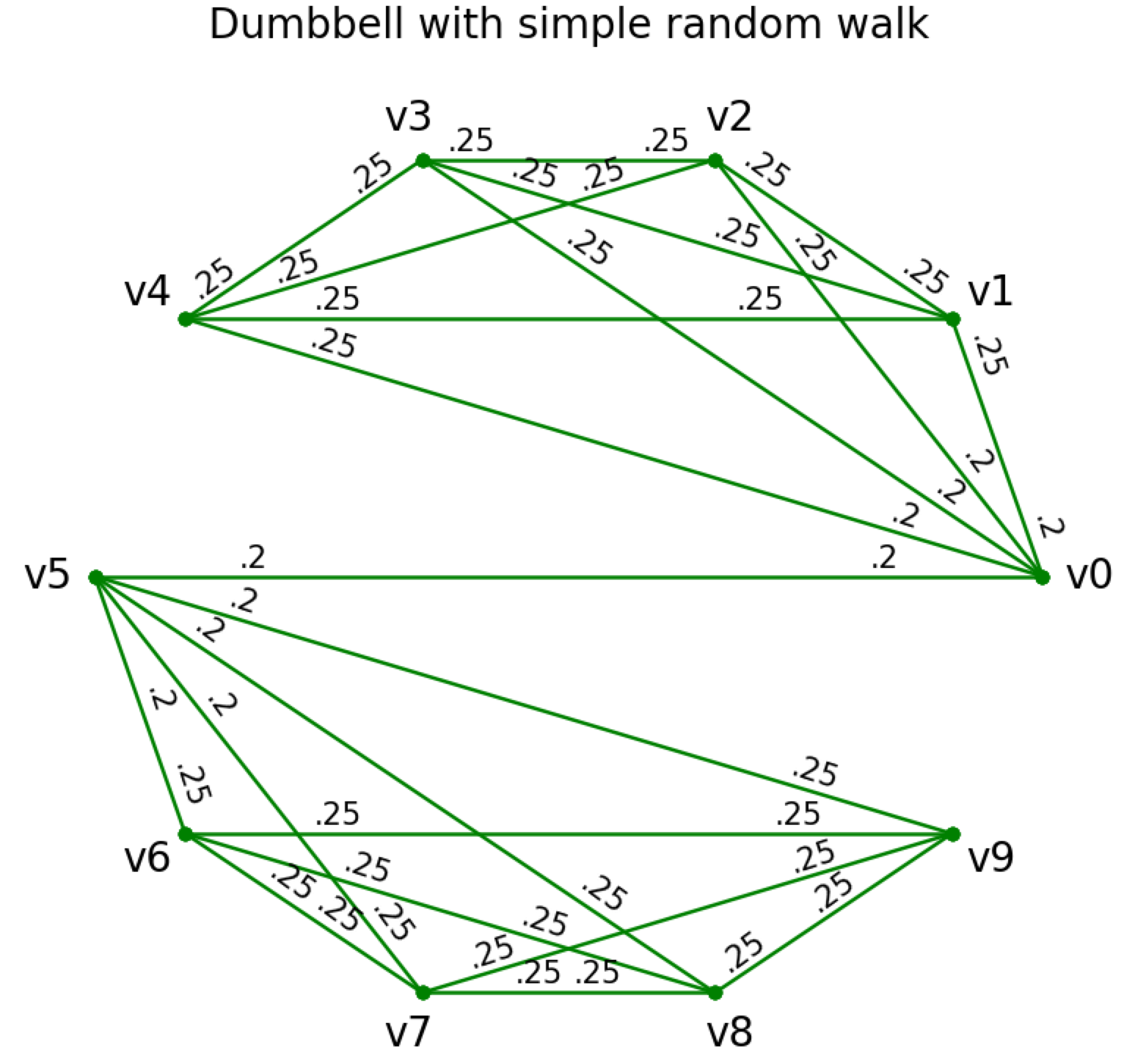}
\includegraphics[width=0.49\textwidth]{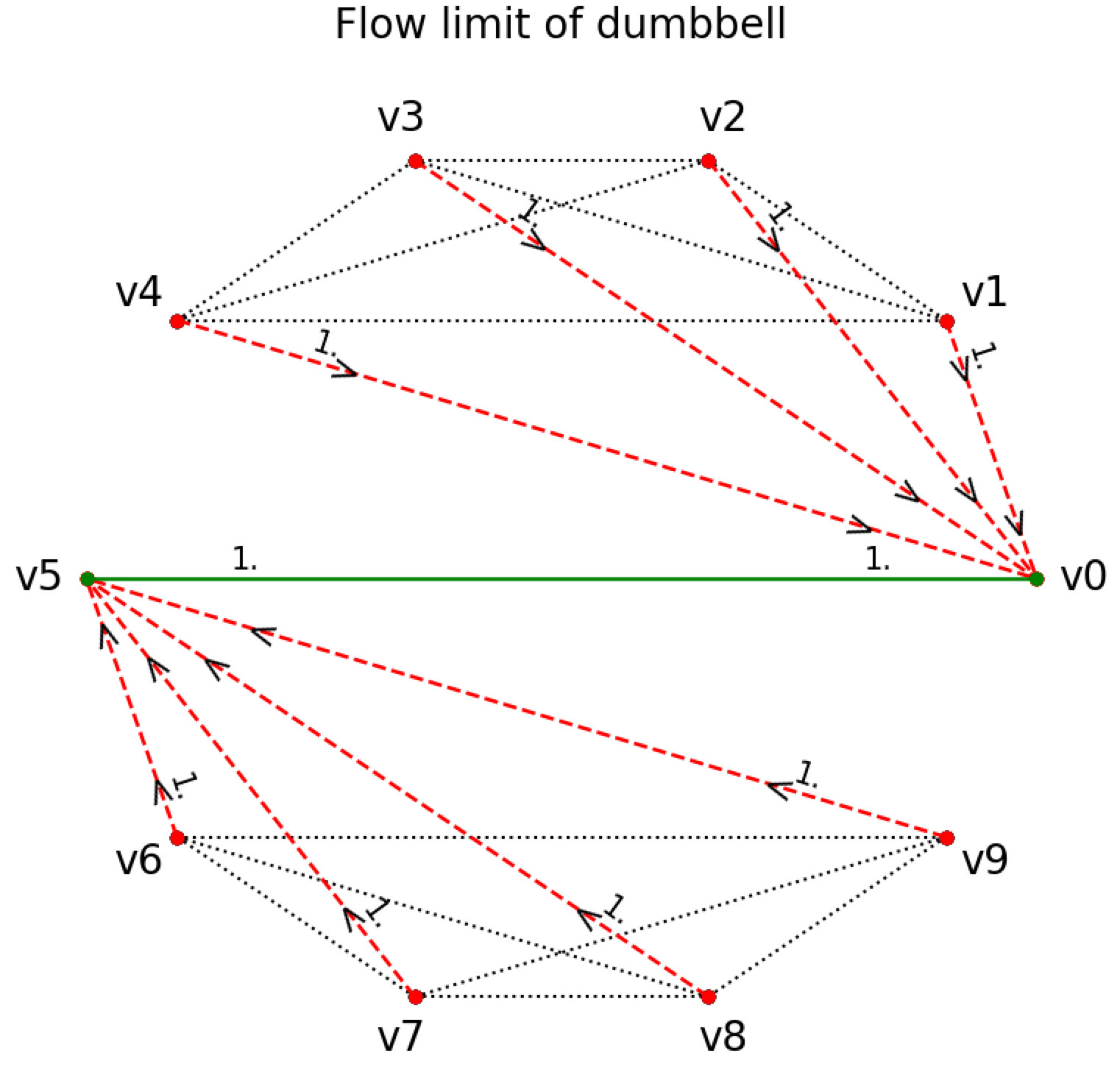}
\caption{Curvature flow of a dumbbell as a wedge sum of a $K_5$, $K_2$ and another $K_5$ with simple random walk initial weighting scheme $P_0$ (left hand side) and numerical flow limit concentrated on $K_2$ (right hand side)}
\label{fig:dumbbell-graph}
\end{figure}

\FloatBarrier

This limit could have been predicted assuming that the initial weighted graph symmetry across the ``bridge'' is preserved under the curvature flow and that the limit concentrates on only one of the complete graphs $K_5, K_2, K_5$. If instead of the simple random walk, a random initial weighting scheme would have been chosen on the dumbbell $G$, the limit would have usually concentrated on one of the two $K_5$'s.
\end{ex}

\subsection{Cartesian products of complete graphs}\label{sec:cartprodcompl}

We have the following result for Cartesian products of complete graphs:

\begin{thm}
  Let $G$ be the Cartesian product of two complete graphs $K_{n+1}$ and $K_{m+1}$ with the non-lazy simple random walk $P$ as initial weighting scheme. Then the curvature flow $P(t)$ converges to a limit as $t \to \infty$ with limit transition rates 
  \begin{eqnarray*}
    a^\infty &=& \frac{m+3}{2nm+3n+3m}, \\
    b^\infty &=& \frac{n+3}{2nm+3n+3m},
  \end{eqnarray*}
  where $a^\infty$ are the transition rates along edges between $(x,x')$ and $(y,x')$ with $x,y \in K_{n+1}$, $x \sim y$ and $x' \in K_{m+1}$ (``horizontal edges''), and $b^\infty$ are the transition rates along edges between $(x,x')$ and $(x,y')$ with $x \in K_{n+1}$ and $x',y' \in K_{m+1}$, $x' \sim y'$ (``vertical edges'').
\end{thm}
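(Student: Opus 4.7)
The plan is to exploit the strong symmetry of $G = K_{n+1} \square K_{m+1}$ to reduce the flow to a one-parameter problem, which I can then analyze as a scalar ODE. The automorphism group of $G$ acts transitively on horizontal edges and (separately) transitively on vertical edges, and the non-lazy simple random walk $P$ is invariant under this group. Since the curvature flow is built from intrinsic Bakry--\'Emery data, it commutes with graph automorphisms, hence $P(t)$ remains invariant under the same group for all $t \geq 0$. Consequently there exist functions $a(t), b(t) > 0$ such that every horizontal transition rate equals $a(t)$ and every vertical transition rate equals $b(t)$. The Markovian property, preserved by Theorem~\ref{thm:curvflowprops}, then forces $n\, a(t) + m\, b(t) = 1$ for all $t$.

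Next I would specialize the explicit flow equation \eqref{eq:flowdiffeq-explicit} to a fixed horizontal edge $\{x, y\}$ with $x = (v, v')$ and $y = (w, v')$. The $n + m$ elements of $S_1(x)$ split into $n$ horizontal and $m$ vertical neighbors, and two such neighbors of $x$ are adjacent in $G$ if and only if they are both horizontal or both vertical (since two vertices of $G$ are adjacent iff they differ in exactly one coordinate). Carrying out the four sums appearing in \eqref{eq:flowdiffeq-explicit} with $D_x = 1$ and no laziness gives, after simplification,
\[
a'(t) \;=\; -(n+3)\, a^2 + n(n+3)\, a^3 + m(m+3)\, a\, b^2.
\]
Substituting $na - 1 = -mb$ (from Markovianity) on the right-hand side collapses this to the compact form
\[
a'(t) \;=\; -\, a(t)\, m\, b(t) \, \bigl( (n+3)\, a(t) - (m+3)\, b(t) \bigr),
\]
and the parallel computation on a vertical edge produces the matching equation for $b(t)$ (automatically consistent with the constraint $na + mb = 1$).

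The interior equilibria are then determined by $(n+3)\, a = (m+3)\, b$ together with $n a + m b = 1$; solving this $2 \times 2$ linear system yields exactly $a^\infty = (m+3)/(2nm + 3n + 3m)$ and $b^\infty = (n+3)/(2nm + 3n + 3m)$. For convergence, I eliminate $b = (1 - na)/m$ and view the dynamics as a scalar ODE on the interval $(0, 1/n)$. The factor $(n+3)\, a - (m+3)\, b$, rewritten in terms of $a$ alone, is an affine strictly increasing function of $a$ with its unique zero at $a^\infty$; since $a$, $m$, $b$ all remain positive along the flow, $a(t)$ is strictly monotone (increasing if $a(0) < a^\infty$, decreasing if $a(0) > a^\infty$), bounded, and therefore converges to $a^\infty$. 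The main technical obstacle is the bookkeeping in the reduction step: one must correctly enumerate, for each of the four sums in \eqref{eq:flowdiffeq-explicit}, which pairs in $S_1(x) \cup S_2(x)$ are actually adjacent in $G$. Once that combinatorial accounting is handled cleanly, the rest of the argument is essentially one-dimensional.
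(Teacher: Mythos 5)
Your proposal is correct and follows essentially the same route as the paper: symmetry reduction to two rates $a(t),b(t)$ with $na+mb=1$, specialization of \eqref{eq:flowdiffeq-explicit} to the scalar ODE $a'=-(n+3)a^2+n(n+3)a^3+m(m+3)ab^2$ (matching the paper's computation), identification of the interior equilibrium, and a sign/monotonicity argument forcing convergence. Your factored form $a'=-a\,m\,b\,\bigl((n+3)a-(m+3)b\bigr)$ is merely a tidier packaging of the paper's sign analysis (the paper instead assumes WLOG $n\le m$ and studies the cubic on an interval), so the two arguments coincide in substance.
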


\begin{proof}
By symmetry of the configuration, we have only two types $a(t)$ and $b(t)$ of transition rates for the curvature flow at time $t \ge 0$ (for horizontal and vertical edges, respectively) with
$$ a(0) = b(t) = \frac{1}{n+m} $$
and
$$ na(t) + mb(t) = 1, $$
due to the Markovian property. This implies that $b(t) = \frac{1-na(t)}{m}$ and $b'(t) = - \frac{-na'(t)}{m}$, and we only need to consider an ordinary differential equation for $a$ with initial condition $a(0) = \frac{1}{n+m}$. We can also assume without loss of generality that $n \le m$. We derive from the explicit description \eqref{eq:flowdiffeq-explicit} of the curvature flow that
\begin{multline} \label{eq:diffeqprodknkm} 
a'(t) = \\ a(t) \left( -4a(t) - 2(n-1)a(t) + 4 (na^2(t)+mb^2(t)) + n(n-1)a^2(t) + m(m-1)b^2(t) \right) \\
+ (n-1)a^2(t) = \\
n(n+3)a^3(t) + m(m+3)a(t) \left( \frac{1-na(t)}{m} \right)^2 -(n+3)a^2(t) = \\ 
a(t)\left( n\left( 2n+3+\frac{3n}{m}\right)a^2(t) - \left( 3n+3+\frac{6n}{m} \right)a(t) + 1 + \frac{3}{m} 
\right).
\end{multline}
If this differential equation converges as $t \to \infty$, its limit $a^\infty$ must satisfy
$$ a^\infty \in \left\{ 0,\frac{1}{n},\frac{m+3}{2nm+3n+3m} \right\}, $$
that is
$$ (a^\infty,b^\infty) \in \left\{ \left(0,\frac{1}{m}\right),\left(\frac{1}{n},0\right),\left(\frac{m+3}{2nm+3n+3m},\frac{n+3}{2nm+3n+3m}\right) \right\}, $$
with 
$$ b^\infty = \lim_{t \to \infty} b(t) = \lim_{t \to \infty} \frac{1-na(t)}{m} = \frac{1-na^\infty}{m}. $$
Our assumption $n \le m$ implies that we have
$$ \frac{1}{n+m} \le \frac{m+3}{2nm+3n+3m} < \frac{1}{n} $$
and that the right hand side of \eqref{eq:diffeqprodknkm} is strictly positive for $(at)$ in the interval
$$ \left[ \frac{1}{n+m},\frac{m+3}{2nm+3n+3m} \right), $$
zero at $a(t) = \frac{m+3}{2nm+3n+3m}$,
and strictly negative for $a(t)$ on the interval 
$$ \left( \frac{m+3}{2nm+3n+3m},\frac{1}{n} \right). $$
These monotonicity properties force the function $a(t)$ to converge to the limit $a^\infty = \frac{m+3}{2nm+3n+3m}$.
\end{proof}

\begin{ex}[Flow limit of $K_3 \times K_4$ with simple random walk] The following code computes the numerical flow limit for $K_3 \times K_4$ with the simple random walk as initial weigthing scheme.
\begin{lstlisting}[language=Python]{}
n,m = 2,3
A = cart_prod(complete(n+1),complete(m+1))
P = srw(A)
limit = norm_curv_flow_lim(A, P)[0]
display_weighted_graph(A, P, "Initial weighting scheme of K3 x K4")
display_weighted_graph(A, limit, title="Curvature flow limit")
\end{lstlisting}
\end{ex}
The initial transition rates are all equal to $1/5=0.2$, and the transition rates of the numerical limit are $0.22\dots \approx 2/9$ and $0.19\dots \approx 5/27$, as predicted by the above theorem.

\subsection{Totally degenerate flow limits}
 
As mentioned before, many curvature limits are totally degenerate. Therefore, it is worth to investigate properties of those particular limits. 


\begin{ex}[The octahedron with a totally degenerate flow limit] \label{ex:octahedron} Let $G=(V,E)$ with $V = \{v_0,\dots,v_5\}$ be the unmixed graph representing the octahedron, as illustrated in Figure \ref{fig:octahedron} (left hand side). While the simple random walk without laziness is a stationary solution of the normalised curvature flow, any small perturbation of this initial weighting scheme leads to another curvature sharp limit, which is totally degenerate. For example, the initial weighting scheme
\begin{equation} \label{eq:P0-octahedron} 
P_0 = \begin{pmatrix} 
0&0.26&0&0.24&0.25&0.25\\
0.25&0&0.25&0&0.25&0.25\\
0&0.25&0&0.25&0.25&0.25\\
0.25&0&0.25&0&0.25&0.25\\
0.25&0.25&0.25&0.25&0&0\\
0.25&0.25&0.25&0.25&0&0
\end{pmatrix}
\end{equation}
converges to the totally degenerate limit illustrated in Figure \ref{fig:octahedron} (right hand side) under the numerical curvature flow.

\begin{figure}[h]
\includegraphics[width=0.49\textwidth]{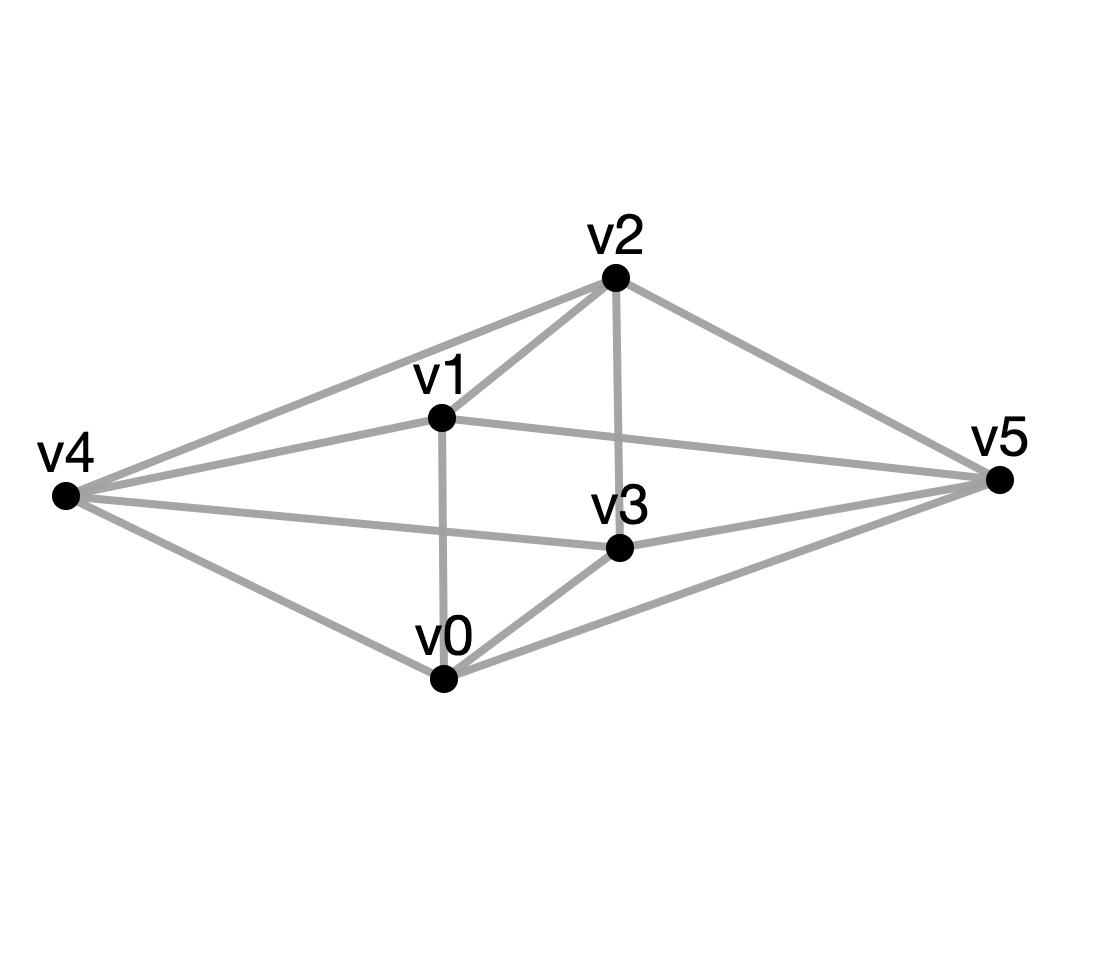}
\includegraphics[width=0.49\textwidth]{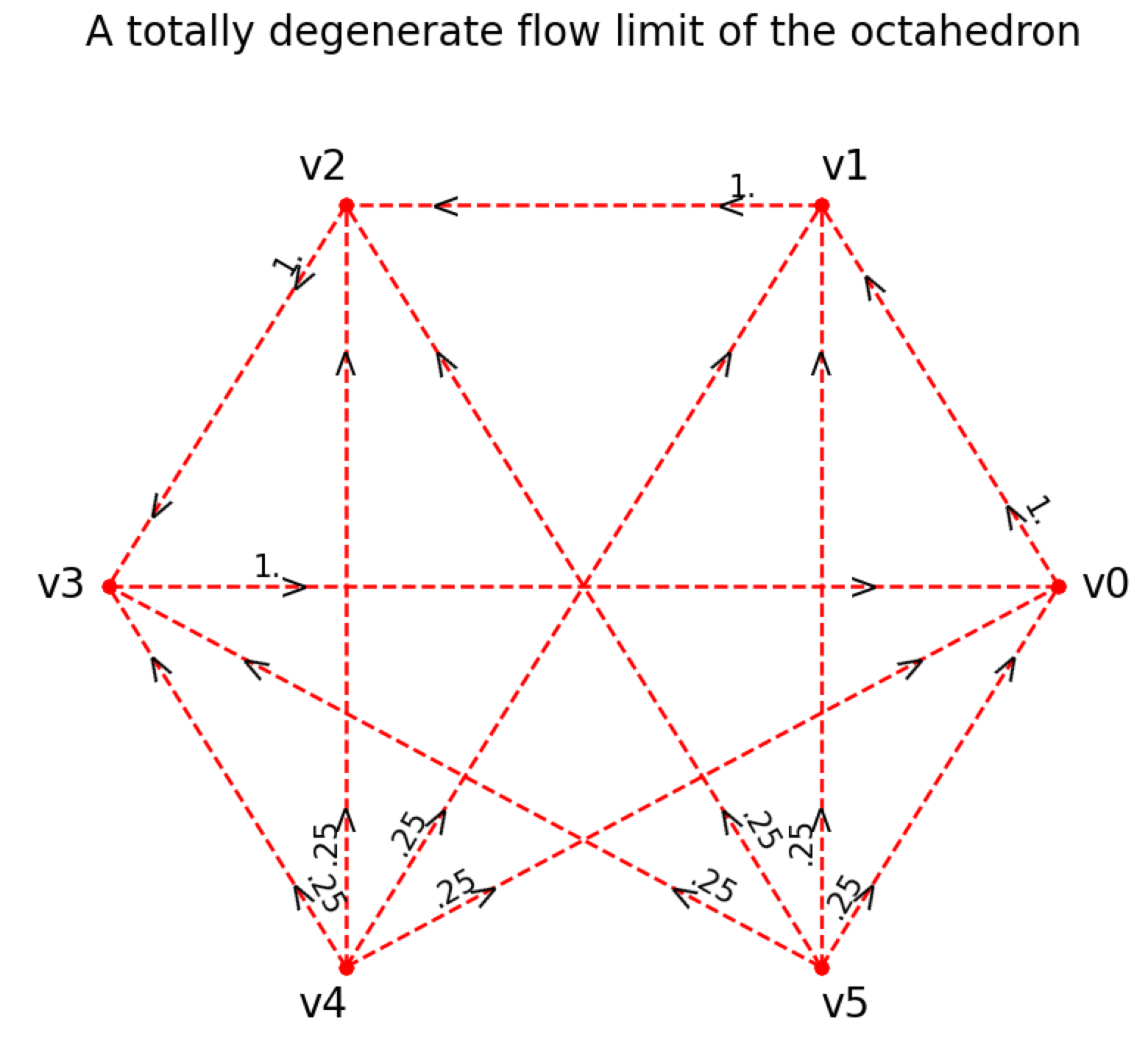}
\caption{Left hand side: An octahedron with vertices $v_0,\dots,v_5$ . Right hand side: A numerical flow limit of the octahedron with a small perturbation of simple random walk as initial weighing scheme}
\label{fig:octahedron}
\end{figure}
\end{ex}

\FloatBarrier


\FloatBarrier

The following considerations show that the limit in Example \ref{ex:octahedron} is essentially the only totally degenerate curvature sharp weighting scheme without two-sided degenerate edges for the octahedron
(see Proposition \ref{prop:octahedron} below). We start with a general unmixed combinatorial graph $G=(V,E)$ without isolated vertices. A totally degenerate weighting scheme $P$ assigns to each edge $\{x,y\} \in E$ either a direction ($x \to y$ if $p_{xy} >0$ and $y \to x$ if $p_{yx} >0$, illustrated by a red dashed line with an arrow) or the edge is two-sided degenerate (that is $p_{xy} = p_{yx}=0$, illustrated by a black dotted line). A first observation is that none of the vertices $x \in V$ can be a sink: the Markovian property requires that at least one edge incident to $x$ must be outward directed. The following lemma presents a useful property of triangles.

\begin{lemma} \label{lem:totdegencurvsharp}
  Let $(G,P)$ be a totally degenerate curvature sharp Markovian weighted graph without laziness. Then the directions of a triangle $T = \{x,y,z\} \subset V$ without two-sided degenerate edges cannot be oriented, that is, its edges cannot have the orientations $x \to y \to z \to x$ or $x \to z \to y \to x$.    
\end{lemma}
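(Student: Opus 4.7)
My plan is to argue by contradiction, using the fact that a curvature sharp Markovian weighted graph (with our normalization) is stationary for the normalized curvature flow, and then extracting a contradiction from the triangle term $(*)$ in the explicit flow equation \eqref{eq:flowdiffeq-explicit}.

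First I would verify the fixed point observation: if $(G,P)$ is curvature sharp, then by Theorem \ref{thm:main}(3) we have $Q_x\mathbf{1}_m = \tfrac{1}{2} K^{d(x,\cdot)}_{P,\infty}(x)\,\mathbf{p}_x$ at every non-isolated vertex $x$. Combined with the normalization $C_x = K^{d(x,\cdot)}_{P,\infty}(x)$ from \eqref{eq:CxMark}, the flow equation \eqref{eq:flowdiffeq} yields $\mathbf{p}_x'(t) = 0$, and in particular $p_{vw}'(t) = 0$ for every pair $v,w \in V$ with $w \in S_1(v)$. Hence $(G,P)$ is a genuine fixed point of the explicit equation \eqref{eq:flowdiffeq-explicit}.

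Next, suppose for contradiction that $T = \{x,y,z\}$ is cyclically oriented; without loss of generality take the orientation $x \to y \to z \to x$. Combined with the absence of two-sided degenerate edges in $T$, this means
\begin{equation*}
    p_{xy},\, p_{yz},\, p_{zx} > 0, \qquad p_{yx} = p_{zy} = p_{xz} = 0.
\end{equation*}
I would then apply \eqref{eq:flowdiffeq-explicit} at the vertex $y$ to the entry $p_{yx}$ (swapping the roles of the letters $x$ and $y$ in that formula). Since there is no laziness the diagonal correction vanishes, and since $p_{yx} = 0$ every term inside the large parenthesis is multiplied by zero. Thus the whole equation collapses to
\begin{equation*}
    0 \;=\; p_{yx}'(t) \;=\; \sum_{v \in S_1(y),\, v \neq x} p_{yv}\, p_{vx}.
\end{equation*}
But the summand $v = z$ equals $p_{yz}\,p_{zx} > 0$ and every remaining summand is nonnegative, so the right hand side is strictly positive, a contradiction. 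The opposite cyclic orientation $x \to z \to y \to x$ is handled by the same argument applied, for instance, to $p_{xy}'(t)$ at the vertex $x$, where the common neighbor $z$ again forces a strictly positive contribution $p_{xz}\,p_{zy} > 0$.

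I do not anticipate any serious technical obstacle; the proof is essentially a one-line consequence of the explicit flow equation once the fixed point characterization is in place. The only subtle point is the role of the hypothesis that $T$ has no two-sided degenerate edges: this is precisely what guarantees each edge of $T$ carries a strictly positive transition rate in its designated direction, and hence that the triangle contribution $p_{yz}\,p_{zx}$ in $(*)$ is strictly positive rather than merely nonnegative.
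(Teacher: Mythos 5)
Your proof is correct and is essentially the paper's own argument: curvature sharpness (via Theorem \ref{thm:main} and the normalization \eqref{eq:CxMark}) makes $P$ stationary for \eqref{eq:flowdiffeq-explicit}, and since the transition rate opposite to the cyclic orientation vanishes, stationarity forces the triangle term $(*)$ to vanish, contradicting $p_{yz}p_{zx}>0$ (resp.\ $p_{xz}p_{zy}>0$). The only cosmetic difference is that the paper first simplifies the stationarity equation for totally degenerate schemes into \eqref{eq:curvsharptotdeg} and treats the orientation $x \to z \to y \to x$ explicitly, whereas you note directly that the whole bracket is killed by the vanishing rate; the substance is identical.
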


\begin{proof}
  Assume that a totally degenerate curvature sharp Markovian weigthing scheme without laziness contains a triangle $T = \{x,y,z\}$ with $p_{xz} , p_{zy}, p_{yx} > 0$, that is, we have an orientation $x \to z \to y \to x$. This means, in particular, that $p_{xy}=0$. We can read off the flow equation \eqref{eq:flowdiffeq-explicit} that curvature sharpness of a totally degenerate Markovian weighting scheme means
  \begin{equation} \label{eq:curvsharptotdeg} 
  p_{xy}(-2\sum_{y'\neq y} p_{yy'} + \sum_{y',y''} p_{xy'}p_{y'y''}) + \sum_{y'\neq y} p_{xy'}p_{y'y} = 0. 
  \end{equation}
  Since we have $p_{xy} = 0$, this means
  $$ 0 \le p_{xz} p_{zy} \le \sum_{y' \neq y} p_{xy'}p_{y'y} = 0, $$
  in contradiction to $p_{xz}, p_{zy} > 0$. The orientation $x \to y \to z \to x$ can be ruled out similarly.
\end{proof}




The main tool in the proof of the following proposition can be found in \cite[Exercise 25.14]{Pak-10}:
Assume a tessellation of the $2$-dimensional sphere carries an orientation along all its edges. For every vertex $v$ of the tessellation let ${\rm{ind}}(v) = 1-c(v)/2$, where $c(v)$ is the number of changes in the orientation of edges adjacent to $v$ (in cyclic order). For a face $f$, let ${\rm{ind}}(f) = 1-c(f)/2$, where $c(f)$ is the number of changes in the orientation (clockwise vs. anti-clockwise) of edges of $f$. Then we have
$$ \sum_v {\rm{ind}}(v) + \sum_{f} {\rm{ind}}(f) = 2. $$





\begin{prop} \label{prop:octahedron}
  Let $G=(V,E)$ be the octahedron, as illustrated in Figure \ref{fig:octahedron} (left hand side). Then $G$ has essentially only one totally degenerate non-lazy curvature sharp Markovian weighting scheme without two-sided
  degenerate edges, namely, we have (up to a permutation of the vertices corresponding to a graph automorphism)
  $$ p_{v_0,v_1} = p_{v_1,v_2} = p_{v_2,v_3} = p_{v_3,v_0} = 1 $$
  and 
  $$ p_{v_4,v_0} = p_{v_4,v_1} = p_{v_4,v_2} = p_{v_4,v_3} = 1/4, \quad p_{v_5,v_0} = p_{v_5,v_1} = p_{v_5,v_2} = p_{v_5,v_3} = 1/4. $$
\end{prop}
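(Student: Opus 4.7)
The task is to classify, up to graph automorphism, all totally degenerate non-lazy Markovian weighting schemes on the octahedron $G$ without two-sided degenerate edges that satisfy the curvature sharpness identity \eqref{eq:curvsharptotdeg} at every edge. Since there are no two-sided degenerate edges, each edge carries a unique orientation. The plan is to first pin down the combinatorial orientation using Lemma \ref{lem:totdegencurvsharp} together with the topological index formula from \cite[Exercise 25.14]{Pak-10}, and then solve a small linear system coming from \eqref{eq:curvsharptotdeg} for the numerical transition rates.

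View $G$ as a triangulation of $S^2$ with $6$ vertices, $12$ edges, and $8$ triangular faces. For each triangular face $f$ one has $c(f) \in \{0,2\}$, and $c(f)=0$ would mean a cyclically oriented triangle, which is ruled out by Lemma \ref{lem:totdegencurvsharp}. Thus every face has index $0$, and $\sum_v {\rm{ind}}(v) + \sum_f {\rm{ind}}(f) = 2$ reduces to $\sum_v {\rm{ind}}(v) = 2$. Each vertex has degree $4$, so $c(v) \in \{0,2,4\}$ and ${\rm{ind}}(v) \in \{1,0,-1\}$. The Markovian no-laziness assumption rules out sinks, so ${\rm{ind}}(v)=1$ forces $v$ to be a source. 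Writing $v_{\pm}$ for the number of vertices of index $\pm 1$, we get $v_+ - v_- = 2$. Two sources cannot be adjacent, and the octahedron has maximum independent set of size $2$ (a pair of antipodal vertices), so $v_+ \le 2$, giving $v_+=2$ and $v_-=0$. Up to a graph automorphism, we may take the sources to be $v_4,v_5$.

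All four edges out of $v_4$ and out of $v_5$ are now oriented outward. It remains to orient the $4$-cycle through $v_0,v_1,v_2,v_3$. Around $v_0$ the cyclic order of neighbors (read off from the link of $v_0$ in $G$) is $v_1,v_4,v_3,v_5$, where the edges to $v_4$ and $v_5$ are both incoming. A direct inspection of the four possibilities for the orientations of $v_0v_1$ and $v_0v_3$ shows that ${\rm{ind}}(v_0)=0$ (that is, avoiding both sink and saddle) forces exactly one of $v_0v_1, v_0v_3$ to be outgoing from $v_0$. The same argument at each of $v_1,v_2,v_3$ forces the $4$-cycle to be consistently directed. Up to a further automorphism we may take $v_0 \to v_1 \to v_2 \to v_3 \to v_0$; the Markovian property then gives $p_{v_i,v_{(i+1)\bmod 4}} = 1$.

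Finally, set $a_i := p_{v_4,v_i}$. Each $v_i$ lies in $N(v_4)$ and has its unique outgoing transition of weight $1$ pointing to $v_{(i+1)\bmod 4} \in N(v_4)$. Substituting into \eqref{eq:curvsharptotdeg} at the edge $(v_4,v_i)$, the coefficient of $a_i$ collapses to $-2\cdot 1 + 1 = -1$, and the last sum reduces to $a_{(i-1)\bmod 4}$ (the only $y' \neq v_i$ with $p_{y',v_i}>0$ is the cycle predecessor). Hence the four edges from $v_4$ yield the cyclic linear system $a_i = a_{(i-1)\bmod 4}$, which together with $\sum_i a_i = 1$ forces $a_i = 1/4$. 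The identical computation at $v_5$ yields $p_{v_5,v_i}=1/4$. The main obstacle is the cyclic-order bookkeeping at the cycle vertices; the topological reduction and the resulting linear system are short once this is in place.
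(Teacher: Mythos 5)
Your proof is correct and follows essentially the same route as the paper: Lemma \ref{lem:totdegencurvsharp} forces all face indices to vanish, the index formula plus the Markovian exclusion of sinks yields two antipodal sources, the equatorial $4$-cycle must be consistently oriented with transition rates $1$, and equation \eqref{eq:curvsharptotdeg} applied to the edges from $v_4$ (and $v_5$) gives the cyclic system forcing all rates equal to $1/4$. The only difference is cosmetic: you rule out an inconsistent orientation of the $4$-cycle via the index count ($v_+=2$, $v_-=0$) at the equatorial vertices, whereas the paper does it directly by observing that an inconsistent orientation would create a sink; both are fine.
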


\begin{proof}
  We can think of the octahedron as a tessellation of the sphere by $8$ triangles, all of them not oriented, by Lemma \ref{lem:totdegencurvsharp}. This means that ${\rm{ind}}(f)=0$ for all faces of the octahedron. Therefore, we must have
  $$ \sum_v {\rm{ind}}(v) = 2, $$
  that is, at least two vertices of the octahedron must have ${\rm{ind}}(v)=1$, which means that each of them must be a source or a sink. The Markovian property rules out sinks, and two sources cannot be adjacent (otherwise they would be connected by a non-degenerate edge). So the octahedron must have two sources at distance $2$, which we denote by $v_4,v_5$. Their edges are all directed towards a cycle of length $4$. The directions of the edges in this cycle must be directed, for otherwise there would be a sink in this cycle which is not possible. We denote this oriented cycle by $v_0 \to v_1 \to v_2 \to v_3 \to v_0$, and we must have
  $$ p_{01} = p_{12} = p_{2,3} = p_{3,0} = 1, $$
  where we used the notation $p_{ij} = p_{v_i,v_j}$, for simplicity. Applying formula \eqref{eq:curvsharptotdeg} to $(x,y)=(v_4,v_0)$ yields
  $$ p_{40} (-2 (p_{01}+p_{03}) + \sum_{i=0}^3 p_{4i}) + p_{41}p_{10} + p_{43}p_{30} = p_{40} (-2 + 1) + p_{43} = 0, $$
  that is $p_{40}=p_{43}$. Similarly, we can show that all transition rates $p_{4i}$ must coincide and, therefore, $p_{40}=p_{41}=p_{42}=p_{43}=1/4$. The same arguments apply to the vertex $v_5$, finishing the proof of the proposition.
\end{proof}

\begin{conj}[Flow limits of the octahedron] The normalized curvature flow on the octahedron for any non-degenerate initial weighting scheme without laziness different from the simple random walk converges always to a limit described in Proposition \ref{prop:octahedron}.
\end{conj}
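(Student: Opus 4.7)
The plan is to combine Theorem \ref{thm:curvflowprops} with (i) a classification of \emph{all} non-lazy curvature sharp Markovian weighting schemes on the octahedron $G$, and (ii) a linearized instability analysis of the simple random walk $P_{\mathrm{srw}}$ under the normalized curvature flow. Granted the general convergence statement (Conjecture 2.1), the limit $P^\infty$ is automatically non-lazy, Markovian and curvature sharp. The conjecture will follow if the classification shows that the only such limits are $P_{\mathrm{srw}}$ and the configurations of Proposition \ref{prop:octahedron}, and if the dynamical analysis rules out $P_{\mathrm{srw}}$ as a flow limit of any non-degenerate non-$P_{\mathrm{srw}}$ initial datum.

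The classification would proceed in three steps. First, I would show that $P_{\mathrm{srw}}$ is the unique non-degenerate non-lazy curvature sharp Markovian scheme on $G$. At every vertex $x$ the $1$-sphere $S_1(x)$ is a $4$-cycle and $S_2(x)$ consists of the single antipodal vertex adjacent to each element of $S_1(x)$, so the Schur-complement identity $Q(x){\bf{1}}_m = \tfrac{1}{2} K_{P,\infty}^{d(x,\cdot)}(x) {\bf{p}}_x$ from Theorem \ref{thm:main} yields an over-determined system of polynomial equations in the local transition rates, which should force $p_{xy} = 1/4$ on every edge. Second, I would rule out totally degenerate curvature sharp schemes containing a two-sided degenerate edge, by applying the sharpness identity \eqref{eq:curvsharptotdeg} at both endpoints of such an edge and invoking Lemma \ref{lem:totdegencurvsharp} on the four triangles containing it to reach a contradiction. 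Third, I would rule out partially degenerate schemes (some non-degenerate edges and some one-sided ones): a non-degenerate edge $\{x,y\}$ placed in two triangles whose third vertices are diametrically opposite in $S_1(x)\cap S_1(y)$ forces, via \eqref{eq:curvsharptotdeg} at $x$ and $y$, the neighbouring transition rates to be non-degenerate as well, and iterating around the sphere reduces the problem to the non-degenerate case of the first step.

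For the instability analysis I would linearize the explicit curvature flow equation \eqref{eq:flowdiffeq-explicit} at $P_{\mathrm{srw}}$ and exploit the order-$48$ symmetry group $\mathrm{Aut}(G)$ of the octahedron. Because the flow is $\mathrm{Aut}(G)$-equivariant, the Jacobian decomposes along the isotypical components of the representation of $\mathrm{Aut}(G)$ on the tangent space to the stratum of non-lazy Markovian schemes; each component is low-dimensional, so the eigenvalues can be computed in closed form. The numerical evidence that arbitrarily small perturbations of $P_{\mathrm{srw}}$ escape to a totally degenerate limit suggests that every non-trivial isotypical block carries a strictly positive eigenvalue, which would make $P_{\mathrm{srw}}$ a source and immediately complete the argument.

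The hard part will be this last step. If some isotypical block has a vanishing or negative linearized eigenvalue, $P_{\mathrm{srw}}$ would possess a non-trivial local centre or stable manifold, and one would have to combine a higher-order Taylor expansion with an ad-hoc Lyapunov function (or a direct analysis of the restriction of the flow to this manifold) to exclude any non-degenerate non-$P_{\mathrm{srw}}$ trajectory from lying on it. A secondary obstacle is the third classification step, where the combinatorial case analysis of mixed degenerate/non-degenerate orientations on the eight faces of the octahedron is delicate and may benefit from computer algebra; the main conceptual risk is an exotic curvature sharp scheme invisible to a naive enumeration but ruled out only by a global invariant such as the index-sum formula already used in the proof of Proposition \ref{prop:octahedron}.
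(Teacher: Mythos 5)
This statement is posed in the paper as a conjecture supported by numerical experiments; the paper offers no proof, so there is nothing to compare your attempt against, and the question is only whether your sketch closes the problem. It does not: it is conditional and has two essential gaps. First, you invoke the general convergence conjecture (Conjecture 1.7 of \cite{CKLMPS-22}, restated in Section 2) to know that a limit $P^\infty$ exists at all; that statement is itself open, so at best you would obtain a conditional result. Second, and more seriously, linearized instability of the simple random walk is far weaker than what the conjecture demands. The conjecture asserts that \emph{no} non-degenerate, non-lazy initial scheme other than $P_{\mathrm{srw}}$ converges to $P_{\mathrm{srw}}$; to get this you must show that the center-stable manifold of $P_{\mathrm{srw}}$ meets the non-degenerate Markovian stratum only in $P_{\mathrm{srw}}$ itself. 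Your hope that ``every non-trivial isotypical block carries a strictly positive eigenvalue'' is contradicted by the paper's own computation in the octahedron example following Example \ref{ex:octahedron}: the spectrum of $DF(P^s)$ at the simple random walk is $0.5$ (multiplicity $3$), $0$ (multiplicity $3$), $-0.75$, $-1$, $-1.5$, so there is a sizeable center-stable manifold and the genuinely hard nonlinear analysis you defer to ``a higher-order Taylor expansion with an ad-hoc Lyapunov function'' is exactly the missing content, not a contingency.

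The classification half of your plan is also not an argument yet. Proposition \ref{prop:octahedron} classifies only \emph{totally degenerate} curvature sharp schemes \emph{without two-sided degenerate edges}; to conclude that every limit of a non-degenerate, non-$P_{\mathrm{srw}}$ initial scheme is of that form you must additionally exclude (i) non-degenerate curvature sharp schemes other than $P_{\mathrm{srw}}$, (ii) partially degenerate curvature sharp schemes, and (iii) curvature sharp schemes containing two-sided degenerate edges (which cannot be dismissed via Proposition \ref{prop:zeronotriangle}, since every edge of the octahedron lies in triangles, and which the flow can produce in the limit even from non-degenerate data, cf.\ Theorem \ref{thm:curvflowprops}). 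For each of these you state an intended strategy (``an over-determined polynomial system should force $p_{xy}=1/4$'', ``apply \eqref{eq:curvsharptotdeg} at both endpoints'', ``iterate around the sphere''), but none is carried out, and note that \eqref{eq:curvsharptotdeg} is derived only for totally degenerate schemes, so it cannot be applied verbatim in the mixed cases; there the stationarity condition retains the full right-hand side of \eqref{eq:flowdiffeq-explicit}. Finally, even a complete classification plus instability of $P_{\mathrm{srw}}$ would not by itself exclude trajectories whose limit is one of the discarded equilibria unless those are also shown to be unreachable, so the dynamical and algebraic halves must be meshed more carefully than the sketch suggests.
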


\section{Asymptotically stable and unstable curvature sharp Markovian weighting schemes}

Recall that every curvature sharp Markovian weighting scheme on a given combinatorial graph is a stationary solution of the normalized curvature flow. It is natural to ask whether such a stationary solution $P^s$ is \emph{asymptotically stable}, that is, whether any closeby Markovian weigthing scheme $P$ converges back to this equilibrium $P^s$ as $t \to \infty$. This can be decided via the linearization of the curvature flow equations around such an equilibrium. In the next subsection, we will describe this linearization in full detail before we consider various examples in the following subsection.

\subsection{Linearization of the curvature flow equations at equilibria}

Let $P^s$ be a curvature sharp Markovian weighting scheme of a finite simple mixed combinatorial graph $G = (V,E)$. We consider Markovian weighting schemes $P$ near $P^s$ with the same laziness, that is $p_{xx} = p_{xx}^s$ for all $x \in V$. 
Let $E^{\rm{dir}} = \{ (x,y) \in V \times V: d(x,y) = 1 \}$. The linearization of $F$ at the equilibrium $P^s$ of the normalized curvature flow is given for each component function $F_{xy}$, $(x,y) \in E^{\rm{dir}}$, by
\begin{multline*}
    DF_{xy}( P^s 
    ) (q_{uv})_{(u,v) \in E^{\rm{dir}}}
    ) = \\
    \left( -4 p_{yx}^s - 2 \sum_{y' \neq y} p_{yy'}^s + \frac{4}{D_x} \sum_{y'} p_{xy'}^s p_{y'x}^s + \frac{1}{D_x}\sum_{y',y''} p_{xy'}^s p_{y'y''}^s - p_{yy}^s \right) q_{xy}\\
    + p_{xy}^s \left( -4 q_{yx} - 2 \sum_{y' \neq y} q_{yy'} + \frac{4}{D_x} \sum_{y'} \left(p_{xy'}^s q_{y'x} + p_{y'x}^s q_{xy'}\right) + \frac{1}{D_x} \sum_{y',y''} \left( p_{xy'}^s q_{y'y''} + p_{y'y''}^s q_{xy'} \right)
    \right) = \\
    \sum_{y' \in S_1(x)} B_{xy}(xy') q_{xy'} + \sum_{y' \in S_1(x)} \sum_{z \in B_1(x)} B_{xy}(y'z) q_{y'z}.
\end{multline*}
Here $y',y''$ are vertices in $S_1(x)$ 
and the potentially non-zero $B$-coefficients are given by
\begin{eqnarray}
B_{xy}(xy) &=& -4 p_{yx}^s - p_{yy}^s - 2 \sum_{y' \neq y} p_{yy'}^s \nonumber \\
&& + \frac{1}{D_x} \left( 4p_{xy}^sp_{yx}^s + 4\sum_{y'} p_{xy'}^s p_{y'x}^s + p_{xy}^s \sum_{y'} p_{yy'}^s + \sum_{y',y''} p_{xy'}^s p_{y'y''}^s \right), \label{eq:Bxy} \\
B_{xy}(xy') &=& \frac{4}{D_x} p_{xy}^s p_{y'x}^s + \frac{1}{D_x} p_{xy}^s \sum_{y''} p_{y'y''}^s + p_{y'y}^s \qquad \qquad\, \text{if $y' \neq y$},  \label{eq:Bxy'} \\
B_{xy}(yx) &=& 4 p_{xy}^s \left( \frac{1}{D_x} p_{xy}^s-1\right), \label{eq:Byx} \\
B_{xy}(y'x) &=& \frac{4}{D_x} p_{xy}^s p_{xy'}^s \qquad \qquad \qquad \qquad \qquad \qquad \qquad \,\,\,\,\,\, \text{if $y' \neq y$}, \label{eq:By'x} \\
B_{xy}(yy') &=& p_{xy}^s \left( \frac{1}{D_x} p_{xy}^s - 2 \right) \qquad \qquad \qquad \qquad \qquad \,\,\,\,\,\,\,\,\,\, \text{if $y' \neq y$}, \label{eq:Byy'} \\
B_{xy}(y'y) &=& p_{xy'}^s\left( \frac{1}{D_x} p_{xy}^s + 1 \right) \qquad \qquad \qquad \qquad \qquad \quad\,\,\, \text{if $y' \neq y$}, \label{eq:By'y} \\
B_{xy}(y'y'') &=& \frac{1}{D_x} p_{xy}^s p_{xy'}^s\qquad \qquad \qquad \qquad \qquad \qquad \qquad \quad \text{if $y' \neq y$ and $y'' \neq y,y'$}. \label{eq:By'y''}
\end{eqnarray}
All other $B$-coefficients are chosen to be zero. Note, however, that the 
transition probabilities $(p_{xy})_{y \in S_1(x)}$ are not independent, and therefore the choice of the $B$-coefficients is not unique, as explained in the following remark.

\begin{rmk} \label{rmk:compl-freedom-asymp}
  Since we have $\sum_{v \in S_1(u)} q_{uv} = 0$ for all $u \in V$, there is a degree of freedom in the choice of the $B$-coefficients. For example, in the case that $G$ is an unmixed complete graph, we can replace $B_{xy}(u,v)$ by $B'_{xy}(u,v) = C_u + B_{xy}(u,v)$ with arbitrary constants $C_u$. This allows us the modify the $B$-coefficients $B_{xy}(yy')$ and $B_{xy}(y'y'')$ in 
  \eqref{eq:Byy'} and \eqref{eq:By'y''} to vanish, and we can use instead
$$ 
  DF_{xy}( P^s 
  ) ( (q_{uv})_{(u,v) \in E^{\rm{dir}}}
  ) = \sum_{y'\neq x} B'_{xy}(xy')q_{xy'} + \sum_{y' \neq x} B'_{xy}(y'x)q_{y'x} + \sum_{y'\neq x,y} B'_{xy}(y'y)q_{y'y}
$$
with
\begin{eqnarray*}
B'_{xy}(xy) &=& -4 p_{yx}^s - p_{yy}^s - 2 \sum_{y' \neq y} p_{yy'}^s + \frac{1}{D_x} \left( 4p_{xy}^sp_{yx}^s + 4\sum_{y'} p_{xy'}^s p_{y'x}^s + p_{xy}^s \sum_{y'} p_{yy'}^s + \sum_{y',y''} p_{xy'}^s p_{y'y''}^s \right), \\
B'_{xy}(xy') &=& \frac{4}{D_x} p_{xy}^s p_{y'x}^s + \frac{1}{D_x} p_{xy}^s \sum_{y''} p_{y'y''}^s + p_{y'y}^s \qquad \qquad\, \text{if $y' \neq x,y$},  \nonumber \\
B'_{xy}(yx) &=& p_{xy}^s \left( \frac{3}{D_x} p_{xy}^s-2\right), \\
B'_{xy}(y'x) &=& \frac{3}{D_x} p_{xy}^s p_{xy'}^s \qquad \qquad \qquad \qquad \qquad \qquad \qquad \,\,\,\,\,\, \text{if $y' \neq x,y$}, \\
B'_{xy}(y'y) &=& p_{xy'}^s \qquad \qquad \qquad \qquad \qquad \qquad \qquad \qquad \qquad \text{if $y' \neq x,y$}. \\
\end{eqnarray*}
Note that in the case of the unmixed complete graph we have $S_1(x) = V \setminus \{x\}$ and, in the formulas for the $B'$-coefficients, $y'y''$ represent all vertices different from $x$, as before. 
\end{rmk}

To end up with a uniquely defined Jacobi matric of $F$, we need to restrict to transition probabilities which are independent. For that we introduce the subset $E^{\rm{ess}} \subset E^{\rm{dir}}$ of ``essential'' transition probabilities by removing, for each $x \in V$ with outgoing directed edges, that is $S_1(x) \neq \emptyset$, one pair $(x,y)$ from $E^{\rm{dir}}$. The cardinality of $E^{\rm{ess}}$ is $M := |E^1| + 2 |E^2| - |V_0|$, where $V_0 \subset V$ is the subset of vertices $x \in V$ for which we have $S_1(x) \neq \emptyset$. Any choice $(p_{uv})_{(u,v) \in E^{\rm{ess}}}$ determines then a weighting scheme $P$ by setting $p_{uv} = D_u - \sum_{v' \in S_1(u) \setminus \{v \}} p_{uv'}$ for the directed edge $(u,v) \not\in E^{\rm{ess}}$. Similarly, any choice $(q_{uv})_{(u,v) \in E^{\rm{ess}}}$
determines also the parameters $q_{uv}$ with $(u,v) \not\in E^{\rm{ess}}$
by setting $q_{uv} = - \sum_{v' \in S_1(u) \setminus \{v \}} q_{uv'}$. Then
$DF((p_{uv}^s)_{(u,v) \in E^{\rm{ess}}})$ is a square matrix of size $M$, and
the weighting scheme $P^s$ corresponding to $(p_{uv}^s)_{(u,v) \in E^{\rm{ess}}}$ is \emph{asymptotically stable} if and only if the real parts of all eigenvalues of this square matrix are negative, and the weighting scheme $P^s$ is \emph{unstable} if and only if at least one of the real parts of these eigenvalues is positive.

Let us reformulate this restriction in terms of matrix multiplications. We start by enumerating the vertices of the graph $G = (V,E)$:
$V = \{ v_0,\dots,v_N \}$. We also introduce the following enumeration on the directed edges in $E^{\rm{dir}}$: Let $1 = j_0$ and
$a_{j_0},\dots,a_{k_0}$ be the edges of the type $(v_0,*)$ (where second vertices are chosen with increasing indices) in $E^{\rm{dir}}$, $j_1 = k_0+1$ and $a_{j_1},\dots,a_{k_1}$ be the edges of the type $(v_1,*)$ in $E^{\rm{dir}}$, and so on. For all vertices $v_l \in V$ with $S_1(v_l) = \emptyset$, we set $j_l = k_{l-1}+1$ and $k_l = k_{l-1}$.
We remove the edges $a_{k_0}, a_{k_1}, \dots, a_{k_N}$ from $E^{\rm{dir}}$ to obtain $E^{\rm{ess}}$. 
For simplicity, we use the notation
$p_j^s$ and $q_j$ for $p_{a_j}^s$ and $q_{a_j}$, and we can write for all $a_j \in E^{\rm{ess}}$,
$$ DF_{a_j}((p_k^s)_{a_k \in E^{\rm{ess}}})((q_k)_{a_k \in E^{\rm{ess}}} ) = \sum_{l=0}^{N}
\sum_{k=j_l}^{k_l} B_{a_j}(a_k) q_k = \sum_{l=0}^{N}
\sum_{k=j_l}^{k_l-1} (B_{a_j}(a_k)-B_{a_j}(a_{k_l})) q_k,
$$
where the last expression involves only parameters $q_k$ corresponding to essential directed edges $a_k \in E^{\rm{ess}}$.
Consequently, the Jacobi matrix $DF((p_k^s)_{a_k \in E^{\rm{ess}}})((q_k)_{a_k \in E^{\rm{ess}}} )$ can be written as
\begin{equation} 
DF((p_k^s)_{a_k \in E^{\rm{ess}}}) = P_1 B P_2 
\end{equation}
where $B$ is the square matrix of size $k_N$ with $B_{jk} = B_{a_j}(a_k)$ for $a_j,a_k \in E^{\rm{dir}}$, $P_1$ is obtained from the identity matrix $I_{k_N}$ by removing the rows $k_0, k_1,\dots, k_N$, and $P_2 = P_1^\top - P_3$ with $P_3$ a $k_N \times M$ matrix whose first $k_0-1$ columns are all the standard basis vector $e_{k_0}$, the next $k_1-j_1$ columns are all the standard basis vector $e_{k_1}$, and so on.

\subsection{Examples of asymptotically stable and unstable equilibria}

In this subsection we investigate curvature sharp weighting schemes of various examples of unmixed combinatorial graphs.

\begin{ex}[Curvature sharp weighting schemes on a cycle] Let $C_N = (V,E)$ be a cycle of length $N \ge 4$, that is $V= \{v_0,v_1,\dots,v_{N-1}\}$ and $v_i \sim v_{i+1}$ with indices $i$ modulo $N$. For simplicity, we refer to vertex $v_i$ henceforth as $i$. We assume $P^s$ to be a non-lazy curvature sharp weighting scheme on $C_N$, and we remove the directed edges $(i,i+1) \in E^{\rm{dir}}$ to obtain $E^{\rm{ess}}$, and the only coefficients $B_{a_j}(a_k)$ with $a_j \in E^{\rm{ess}}$ and $a_k \in E^{\rm{dir}}$, which may be potentially non-zero are (see \eqref{eq:Bxy}, \eqref{eq:Bxy'}, \eqref{eq:Byx} and \eqref{eq:By'x}),
\begin{eqnarray*}
B_{i,i-1}(i,i-1) &=& -4p_{i,i-1}^s + 8p_{i,i-1}^s p_{i-1,i}^s + 4p_{i,i+1}^s p_{i+1,i}^s, \\
B_{i,i-1}(i,i+1) &=& 4 p_{i,i-1}^s p_{i+1,i}^s, \\
B_{i,i-1}(i-1,i) &=& 4(p_{i,i-1}^s)^2-4p_{i,i-1}^s, \\
B_{i,i-1}(i+1,i) &=& 4p_{i,i-1}^s p_{i,i+1}^s.
\end{eqnarray*}
This implies
\begin{multline*}
DF_{i,i-1}((p_k^s)_{a_k \in E^{\rm{ess}}})((q_k)_{a_k \in E^{\rm{ess}}}) \\
= (B_{i,i-1}(i,i-1)-B_{i,i-1}(i,i+1)) q_{i,i-1} + B_{i,i-1}(i+1,i) q_{i+1,i} - B_{i,i-1}(i-1,i) q_{i-1,i-2} \\
= 4( (p_{i,i-1}^s)^2-p_{i,i-1}^s ) q_{i-1,i-2} + 4(-p_{i,i-1}^s + 2 p_{i,i-1}^s p_{i-1,i}^s + p_{i,i+1}^s p_{i+1,i}^s - p_{i,i-1}^s p_{i+1,i}^s) q_{i,i-1}
+ 4 p_{i,i-1}^s p_{i,i+1}^s q_{i+1,i}.
\end{multline*}
In the case of the simple random walk $p_{i,i_1}^s=p_{i,i+1}^s = 1/2$, $i \in \{0,\dots,N-1\}$, this simplifies to
$$ DF_{i,i-1}((p_k^s)_{a_k \in E^{\rm{ess}}})((q_k)_{a_k \in E^{\rm{ess}}}) = q_{i-1,i-2} + q_{i+1,i},$$
and the corresponding matrix $DF((p_k^s)_{a_k \in E^{\rm{ess}}})$ coincides with the adjacency matrix of $C_N$ whose largest eigenvalue is $2$. Therefore the simple random walk on $C_N$, $N \ge 4$, is an unstable equilibrium. This is in contrast to the simple random walk on $C_3 = K_3$, which is asymptotically stable, as we will see in Example \ref{ex:srwcompletestable}. 

In the case of the totally degenerate clockwise weighting scheme $p_{i,i-1}=1$ and $p_{i,i+1}=0$ (see right hand side of Figure \ref{fig:path-cycle-graph}), we have
$$ DF_{i,i-1}((p_k^s)_{a_k \in E^{\rm{ess}}})((q_k)_{a_k \in E^{\rm{ess}}}) = - 4 q_{i,i-1}, $$
that is, $DF((p_k^s)_{a_k \in E^{\rm{ess}}}) = - 4 {\rm{Id}}_N$, and this curvature sharp Markovian weigthing scheme is asymptotically stable. This agrees with the fact that many initial weighting schemes end up in this limit under the curvature flow. The same holds true for the corresponding totally degenerate anti-clockwise weighting scheme with $p_{i,i+1}=1$ and $p_{i,i-1}=0$.
\end{ex}

\begin{ex}[Flow limits of the octahedron]
We know from Example \ref{ex:octahedron} that stationary solutions of the normalized curvature flow on the octahedron are the simple random walk without laziness as well as the totally degenerate weighting scheme given as matrix $P^s$ in the following code (see also the right hand side of Figure \ref{fig:octahedron}). The linearization $DF(P^s)$ is analyzed in line 10 of the program, and the function \blue{\texttt{equilibrium_type}} with the parameters chosen in lines 6 and 7 returns one of the values $-1,0,1$ (corresponding to ``asymtotically stable'', ''undecided'', ``unstable'', respectively), following by a list of its eigenvalues. That is, after execution of line 10, {\tt{result[0]}} is one of the values $-1,0,1$ and {\tt{result[1]}} is a list of the 18 eigenvalues $\lambda_j$ of $DF(P^s)$.

\begin{lstlisting}[language=Python]{}
A = [[0,1,0,1,1,1],[1,0,1,0,1,1],[0,1,0,1,1,1],
     [1,0,1,0,1,1],[1,1,1,1,0,0],[1,1,1,1,0,0]]
# Ps = srw(A)
Ps = [[0,1,0,0,0,0],[0,0,1,0,0,0],[0,0,0,1,0,0],
      [1,0,0,0,0,0],[1/4,1/4,1/4,1/4,0,0],[1/4,1/4,1/4,1/4,0,0]]
eigenvalues = True
jacobi_matrix = False
threshold = 0.001
norm_tolerance = 0.001
result = equilibrium_type(A,Ps,eigenvalues,jacobi_matrix,norm_tolerance,threshold)
print("Flow dynamics eigenvalues:")
for j in range(18):
    print(np.around(result[1][j],3))
\end{lstlisting}


The program provides us with the following list of complex eigenvalues:

\medskip

\begin{center}
\begin{tabular}{lrrrrrrrr}
$\lambda_j$ & $-1$ & $-1+i$ & $-1-i$
& $-2$ & $-2+i$ & $-2-i$ & $-3$ & $-4$ \\ \hline
multiplicity &
$2$ & $2$ & $2$ & $2$ & $2$ & $2$ & $2$ & $4$ \end{tabular}
\end{center}

\medskip

This shows that the totally degenerate curvature sharp weighting scheme $P^s$ on the octahedron is asymptotically stable. This is expected since the initial weighting scheme $P_0$ in \eqref{eq:P0-octahedron} of Example \ref{ex:octahedron} converges to this limit under the normalized numerical curvature flow. 

Running the same code for the simple random walk without laziness instead (by uncommenting line 3 and commenting out lines 4 and 5 in the above code) shows that this second curvature sharp weighting scheme is unstable. The eigenvalues in this case are all real valued, one of them $0.5$, and given as follows:

\medskip

\begin{center}
\begin{tabular}{lrrrrr}
$\lambda_j$ & $0.5$ & $0$ & $-0.75$
& $-1$ & $-1.5$ \\ \hline
multiplicity &
$3$ & $3$ & $2$ & $6$ & $4$
\end{tabular}
\end{center}
\end{ex}

\begin{ex}[Simple random walk on a complete graph] \label{ex:srwcompletestable} Let $K_{n+1}=(V,E)$ be the complete unmixed graph with $n+1\ge 3$ vertices. 
Instead of the $B$-coefficients, we make use of the $B'$-coefficients introduced in Remark \ref{rmk:compl-freedom-asymp}. The simple random walk $p_{xy} = \frac{1}{n}$ for $x \neq y$ is a curvature sharp Markovian weighting scheme, and the non-zero $B'$-coefficients are then given by
$$ B'_{xy}(xy) = \frac{(n+1)(3-n)}{n^2}, \, B'_{xy}(xy') = \frac{2n+3}{n^2}, \,
B'_{xy}(yx) = \frac{3-2n}{n^2}, \, B'_{xy}(y'x) = \frac{3}{n^2}, \, B'_{xy}(y'y) = \frac{1}{n}, $$
where $y' \in V$ is an arbitrary vertex different from $x,y$. Let $\{ v_0, v_1, \dots, v_n \}$ be the vertex set of $K_{n+1}$ and, as in the previous example, we refer to vertex $v_i$ as $i$, for simplicity.

Let us now consider the case $n=2$. The process of removing edges from $E^{\rm{dir}}$ described earlier leads to the following remaining
edges in $E^{\rm{ess}}$:
$$ a_1 = (0,1), \quad a_3 = (1,0), \quad a_5 = (2,0). $$
Choosing the simple random walk $p_{01}^s = p_{10}^s = p_{20}^s = 1/2$, we have
\begin{multline}
DF(p_{01}^s,p_{10}^s,p_{20}^s) = \begin{pmatrix} e_1^\top \\ e_3^\top \\ e_5^\top  \end{pmatrix} B' \begin{pmatrix} e_1-e_2 & e_3-e_4 & e_5 - e_6 \end{pmatrix} \\
= \begin{pmatrix}  B_{01}'(01) - B'_{01}(02) & B_{01}'(10) - B_{01}'(12) & B_{01}'(20) - B_{01}'(21) \\
B_{10}'(01) - B_{10}'(02) & B_{10}'(10) - B_{10}'(12) & B_{10}'(20) - B_{10}'(21) \\
B_{20}'(01) - B_{20}'(02) & B_{20}'(10) - B_{20}'(12) & B_{20}'(20) - B_{20}'(21) \end{pmatrix} = \begin{pmatrix} -1 & \frac{1}{4} & \frac{1}{4} \\ \frac{1}{4} & -1 & \frac{1}{4} \\
\frac{1}{4} & \frac{1}{4} & -1 \end{pmatrix},
\end{multline}

which is a negative definite matrix with eigenvalues $-\frac{1}{2}$, $-\frac{5}{4}$, $-\frac{5}{4}$, showing that the simple random walk on $K_3$ is an asymptotically stable 
equilibrium. This result can be numerically verified via the following code:

\begin{lstlisting}[language=Python]{}
n = 2
A = complete(n+1)
P = srw(A)
result = equilibrium_type(A, P, True, True)
print()
print("Eigenvalues:")
print(np.around(result[1], 3))
print()
print("Linearised flow matrix at equilibrium:")
print(np.around(result[2], 3))
\end{lstlisting}

The program returns the eigenvalues of the linearized flow matrix of $K_3$ at the simple random walk. Note however that the computed matrix is based here on the $B$-coefficients instead of the $B'$-coefficients, so this matrix is slightly different from the one given above, while the eigenvalues are the same.

In the case $n=3$, 
we have $p_{01}^s=p_{02}^s=p_{10}^s=p_{12}^s=p_{20}^s=p_{21}^s=p_{30}^s=p_{31}^s=1/3$ and the returned matrix by the program
(after changing {\tt{n=2}} into {\tt{n=3}} in line 1 of the code) is as follows:

$$
     DF(p_{01}^s,p_{02}^s,p_{10}^s,p_{12}^s,p_{20}^s,p_{21}^s,p_{30}^s,p_{31}^s) = \begin{pmatrix} 
     -1 & 0 & -1/3 & 0 & 1/3 & 1/3 & 1/3 & 1/3 \\ 
     0 & -1 & 1/3 & 1/3 & -1/3 & 0 & 0 & -1/3 \\
     -1/3 & 0 & -1 & 0 & 1/3 & 1/3 & 1/3 & 1/3 \\
     1/3 & 1/3 & 0 & -1 & 0 & -1/3 & -1/3 & 0 \\
     0 & -1/3 & 1/3 & 1/3 & -1 & 0 & 0 & -1/3 \\
     1/3 & 1/3 & 0 & -1/3 & 0 & -1 & -1/3 & 0 \\
     1/3 & 1/3 & 0 & -1/3 & 0 & -1/3 & -1 & 0 \\
     0 & -1/3 & 1/3 & 1/3 & -1/3 & 0 & 0 & -1
     \end{pmatrix}.
$$ 
Its eigenvalues are $-\frac{2}{3}$ with multiplicity $6$ and $-2$ with multiplicity $2$. This shows that the simple random walk on $K_4$ is again an asymptotically stable equilibrium.
\end{ex}

Since numerical experiments show that all non-degenerate Markovian weighting schemes without laziness on $K_{n+1}$ converge to the simple random walk, we expect that the simple random walk on $K_{n+1}$ is an asymptotically stable equilibrium for all $n \ge 2$ (see also our Conjecture \ref{conj:complgraph}, which an is even stronger statement). To this end, our code provides the following numerical results: The eigenvalues of $DF(P^s)$ for the simple random walk without laziness on the complete graph $K_{n+1}$ are real valued and given by
\begin{itemize}
    \item $-\frac{7}{16}$ with multiplicity $4$, $-\frac{3}{4}$ with multiplicity $6$, and $-\frac{7}{4}$ with multiplicity $5$ for $n=4$,
    \item $-\frac{8}{25}$ with multiplicity $5$, $-\frac{4}{5}$ with multiplicity $10$, and
    $-\frac{8}{5}$ with multiplicity $9$ for $n=5$,
    \item $-\frac{1}{4}$ with multiplicity $6$, $-\frac{5}{6}$ with multiplicity $15$, and $-\frac{3}{2}$ with multiplicity $14$ for $n=6$, and
    \item $-\frac{10}{49}$ with multiplicity $7$, $- \frac{6}{7}$ with multiplicity $21$, and $-\frac{10}{7}$ with multiplicity 20 for $n=7$.
\end{itemize}
These results give rise to the following conjecture:

\begin{conj}
 The non-lazy simple random walk $P^s$ on the unmixed complete graph $K_{n+1}$, $n \ge 2$, is asymptotically stable and the eigenvalues of $DF(P^s)$ are given by
 \begin{eqnarray*}
 - \frac{n-1}{n} && \qquad \text{with multiplicity ${n \choose 2}$,} \\
 - \frac{n+3}{n^2} && \qquad \text{with multiplicity $n$,} \\
 - \frac{n+3}{n} && \qquad \text{with multiplicity ${n \choose 2}-1$.}
 \end{eqnarray*}
 As $n \to \infty$, we have $- \frac{n-1}{n} \to -1$, $- \frac{n+3}{n^2} \approx -\frac{1}{n} \to 0$ and $- \frac{n+3}{n} \to -1$.
\end{conj}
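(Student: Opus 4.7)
The overall strategy is to exploit the $S_{n+1}$-symmetry of the simple random walk $P^s$ on $K_{n+1}$. Since $P^s$ is fixed by every graph automorphism, the linearisation $DF(P^s)$ is $S_{n+1}$-equivariant on the space of Markov-preserving perturbations
$$\mathcal{V} := \Bigl\{q \in \mathbb{R}^{E^{\mathrm{dir}}} : \textstyle\sum_{y \neq x} q_{xy}=0 \text{ for every vertex } x\Bigr\},$$
which has dimension $n^2-1$. The plan is to exhibit three explicit $S_{n+1}$-invariant subspaces on which $DF(P^s)$ acts by a scalar and to read off the three eigenvalues by direct computation.

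I would work throughout with the simplified $B'$-coefficients introduced in Remark \ref{rmk:compl-freedom-asymp}, so that the potentially nonzero contributions at the simple random walk are the five values tabulated in Example \ref{ex:srwcompletestable}. The three target subspaces are the antisymmetric piece $\mathcal{V}^{\mathrm{anti}} := \{q \in \mathcal{V}: q_{yx}=-q_{xy}\}$ of dimension $\binom{n}{2}$, the symmetric piece $\mathcal{V}^{\mathrm{sym}} := \{q \in \mathcal{V}: q_{yx}=q_{xy}\}$ of dimension $\tfrac{(n+1)(n-2)}{2}=\binom{n}{2}-1$, and the ``standard'' piece $\mathcal{V}^{\mathrm{std}} := \{q: q_{xy}=f(x)+n f(y) \text{ for some } f\in\mathbb{R}^{V} \text{ with } \sum_z f(z)=0\}$ of dimension $n$. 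A short dimension count plus three pairwise intersection checks (for instance, a vector in $\mathcal{V}^{\mathrm{std}}\cap\mathcal{V}^{\mathrm{sym}}$ forces $(1-n)(f(x)-f(y))=0$ and hence $f$ constant, so $q=0$) shows that these three subspaces give a direct-sum decomposition $\mathcal{V}=\mathcal{V}^{\mathrm{std}}\oplus\mathcal{V}^{\mathrm{sym}}\oplus\mathcal{V}^{\mathrm{anti}}$.

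I would then compute $DF(P^s)$ on a generic element of each summand, using the row-sum constraint to simplify auxiliary sums via $\sum_{y'\neq x,y} q_{xy'} = -q_{xy}$ and $\sum_{y'\neq x,y} q_{yy'} = -q_{yx}$. On $\mathcal{V}^{\mathrm{anti}}$ the antisymmetry collapses the $B'$-formula to $DF(q)_{xy} = \bigl(\tfrac{4-n}{n} - \tfrac{2}{n} - \tfrac{1}{n}\bigr)\,q_{xy} = -\tfrac{n-1}{n}\,q_{xy}$; on $\mathcal{V}^{\mathrm{sym}}$ the analogous symmetrised combination of $B'$-coefficients yields $-\tfrac{n+3}{n}\,q_{xy}$; and on $\mathcal{V}^{\mathrm{std}}$, substituting $q_{xy}=f(x)+nf(y)$ into all five terms of the $B'$-formula and separately collecting the coefficients of $f(x)$ and of $f(y)$ produces the scalar $-\tfrac{n+3}{n^2}$. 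Because all three eigenvalues are strictly negative for $n\ge 2$, the simple random walk is asymptotically stable, and their multiplicities $\binom{n}{2}$, $\binom{n}{2}-1$, $n$ match the claim. The stated asymptotic behaviour as $n\to\infty$ is immediate once the closed forms are in hand.

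The main obstacle is the step producing the third summand: the naive splitting of $\mathbb{R}^{E^{\mathrm{dir}}}$ into symmetric and antisymmetric parts is \emph{not} compatible with the row-sum constraint, because the two copies of the $n$-dimensional standard $S_{n+1}$-representation coming from those two parts mix under the constraint so that only one ``diagonal'' copy survives inside $\mathcal{V}$. Identifying the right parameterisation $q_{xy}=f(x)+nf(y)$ (which one can either guess by working out which linear combination of the sym and anti copies lies in the kernel of the row-sum map, or derive via a Schur-lemma argument applied to the isotypic decomposition of $\mathbb{R}^{E^{\mathrm{dir}}}$) is what makes the computation on $\mathcal{V}^{\mathrm{std}}$ tractable; the subsequent arithmetic is routine but requires careful bookkeeping of the $\sum_{y'\neq x,y}$-sums against the constraint $\sum_z f(z)=0$.
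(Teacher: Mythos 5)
Your statement is only a \emph{conjecture} in the paper: the authors give no proof, just closed-form spectra for $K_3$ and $K_4$ and numerically computed eigenvalues for $n=4,\dots,7$. So your proposal is not a rederivation of the paper's argument but an actual proof strategy for something the paper leaves open, and its substance checks out. Starting from the $B'$-coefficients of Remark \ref{rmk:compl-freedom-asymp} evaluated at the simple random walk (the five values listed in Example \ref{ex:srwcompletestable}) and using the row-sum constraint $\sum_{y'\neq x}q_{xy'}=0$, the linearisation collapses to $DF_{xy}(q)=-\bigl(1+\tfrac1n\bigr)q_{xy}-\tfrac2n q_{yx}+\tfrac{3}{n^2}c_x+\tfrac1n c_y$, where $c_v=\sum_{u\neq v}q_{uv}$ is the column sum. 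On your antisymmetric and symmetric pieces the column sums vanish and one gets the scalars $-\tfrac{n-1}{n}$ and $-\tfrac{n+3}{n}$; on the piece $q_{xy}=f(x)+nf(y)$ with $\sum_z f(z)=0$ one has $c_v=(n^2-1)f(v)$ and the scalar $-\tfrac{n+3}{n^2}$ — exactly as you claim, with dimensions $\binom{n}{2}$, $\binom{n}{2}-1$ and $n$ summing to $n^2-1=\dim\mathcal{V}$, and in agreement with all the numerics reported in the paper for $n\le 7$. You should also state explicitly that the paper's reduced Jacobian on the essential coordinates $E^{\mathrm{ess}}$ is just the same linear operator on $\mathcal{V}$ written in different coordinates, so your subspace eigenvalues really are the eigenvalues of $DF(P^s)$ in the paper's sense.

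The one step to tighten is the directness of the decomposition. Pairwise trivial intersections plus a dimension count do not in general force a sum of three subspaces to be direct, and the shortcut ``distinct eigenvalues on the three pieces'' is unavailable precisely at $n=3$, where $-\tfrac{n-1}{n}=-\tfrac{n+3}{n^2}=-\tfrac23$ (consistently with the paper's $K_4$ table showing $-\tfrac23$ with multiplicity $6$). A clean fix within your framework: for $q\in\mathcal{V}^{\mathrm{std}}$ the symmetric part is $\tfrac{n+1}{2}\bigl(f(x)+f(y)\bigr)$, whose row sums are $\tfrac{(n+1)(n-1)}{2}f(x)$; if $q$ also lay in $\mathcal{V}^{\mathrm{sym}}\oplus\mathcal{V}^{\mathrm{anti}}$, uniqueness of the symmetric/antisymmetric split would force these row sums to vanish, hence $f\equiv 0$ and $q=0$. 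This gives $\mathcal{V}^{\mathrm{std}}\cap\bigl(\mathcal{V}^{\mathrm{sym}}\oplus\mathcal{V}^{\mathrm{anti}}\bigr)=0$, and the dimension count then closes the argument for all $n\ge 2$. With that repair (or the Schur-lemma route you mention, carried out explicitly), your outline constitutes a complete proof of the conjectured spectrum and of asymptotic stability.
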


\begin{ex}[Simple random walks on hypercubes]
We know from \cite[Corollary 1.14]{CKLMPS-22} that the simple random walk without laziness on a hypercube $Q^d = (K_2)^d$ is curvature sharp, and it is the only non-degenerate curvature sharp weigthing schemes without laziness if and only if $d$ is odd. For $d=2$, a non-degenerate curvature sharp weigthing scheme on $Q^2$ different from the simple random walk is given in line 4 of the following code:
\begin{lstlisting}[language=Python]{}
A = hypercube(2)
p = rand()
q = 1-p
P = [[0,p,q,0],[p,0,0,q],[p,0,0,q],[0,p,q,0]]
result = equilibrium_type(A,P,True)
print("flow dynamics eigenvalues:", result[1])
\end{lstlisting}
All eigenvalues $\lambda_j$ of the corresponding linearized flow matrix provided by this code via line 6 seem to be real. Two of them are numerically zero and the other two are given by $\pm \lambda$ with a non-zero real $\lambda$. So this equilibrium is unstable.

Our experiments for arbitrary weighting schemes on $Q^d$, $d \ge 2$, close to the simple random walk show that the numerical flow converges usually to a degenerate limit. This agrees with our observation that the linearized flow matrix at the simple random walk seems always to have real eigenvalues with some of them being positive. These eigenvalues can be obtained via the following code:
\begin{lstlisting}[language=Python]{}
d = 3
A = hypercube(d)
P = srw(A)
result = equilibrium_type(A,P,True)
print("Flow dynamics eigenvalues at the simple random walk:")
for j in range((d-1)*(2**d)):
    print(np.around(result[1][j],4))
\end{lstlisting} 
\end{ex}

Let us finish this section with a conjecture which we verified numerically for $d=2,3,\dots,9$:

\begin{conj}
  The non-lazy simple random walk $P^s$ on the hypercube $Q^d$, $d \ge 2$, is unstable and the eigenvalues of the corresponding linearized curvature flow matrix are all real with the largest eigenvalue $\lambda_{\max}$ equals
  $$ \lambda_{\max} = \frac{4}{d}. $$
\end{conj}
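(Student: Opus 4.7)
The plan is to exploit the automorphism group $\mathrm{Aut}(Q^d) \supseteq (\mathbb{Z}/2)^d \rtimes S_d$ of the hypercube, which commutes with $DF(P^s)$ because both the simple random walk weighting and the flow equations are $\mathrm{Aut}(Q^d)$-invariant. I would parameterise deformations by $q(x,i)$, with vertex $x \in \{0,1\}^d$ and coordinate direction $i \in [d]$, subject to the Markov constraint $\sum_i q(x,i) = 0$ at every $x$. A discrete Fourier transform along the translation subgroup $(\mathbb{Z}/2)^d$ block-diagonalises $DF(P^s)$ into $2^d$ blocks $M_S$, one for each character $\chi_S$ with $S \subseteq [d]$, each acting on $\mathbb{C}^d$ and cut down to dimension $d-1$ by the constraint (which Fourier-transforms to $\sum_i \hat q(S,i)=0$).

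The residual $S_d$-symmetry ensures that $M_S$ depends on $S$ only up to $|S|$, so there are essentially $d+1$ distinct blocks $M^{(k)}$, $k \in \{0,1,\dots,d\}$. On each $M^{(k)}$ the stabiliser $S_k \times S_{d-k}$ acts, decomposing $\mathbb{C}^d$ into two trivial lines (one per orbit of coordinates) plus the standard representations of $S_k$ and $S_{d-k}$, of dimensions $k-1$ and $d-k-1$. The Markov constraint collapses the two trivial lines into a single one-dimensional subspace. By Schur's lemma $M^{(k)}$ then acts by three scalars $\mu^{(k)}_0, \mu^{(k)}_1, \mu^{(k)}_2$ on these three isotypic pieces, yielding all eigenvalues of $DF(P^s)$ explicitly, each automatically real because it is the action of a real operator on an absolutely irreducible real subspace (and a scalar on a one-dimensional real subspace in the trivial case).

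The computation of the $\mu^{(k)}_j$ now reduces to substituting $p_{xy}^s = 1/d$ into the $B$-coefficient formulas \eqref{eq:Bxy}--\eqref{eq:By'y''}. Since $Q^d$ has no triangles, no laziness, $D_x = 1$, and $y' \in S_1(x)$ corresponds to $y' = x + e_j$, the action of $M^{(k)}$ on a Fourier mode reduces to elementary sums involving only the signs $\chi_S(e_j) = (-1)^{\mathbf{1}[j \in S]}$. This produces closed-form rational expressions in $1/d$ for $\mu^{(k)}_0, \mu^{(k)}_1, \mu^{(k)}_2$; the multiplicities are $\binom{d}{k}$, $\binom{d}{k}(k-1)$, $\binom{d}{k}(d-k-1)$ respectively (with the usual convention that dimension-zero irreducibles contribute nothing, which cleanly handles the edge cases $k=0,1,d-1,d$). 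Comparing the finitely many scalars as functions of $k$ and $d$ then identifies the maximum, which conjecturally equals $4/d$.

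The main obstacle will be the bookkeeping to evaluate $\mu^{(k)}_0$ cleanly: it arises as the surviving scalar after the Markov constraint collapses the two-dimensional trivial isotypic block, so it requires projecting $M^{(k)}$ onto the constrained trivial subspace and reading off the eigenvalue. A secondary issue is to track which $k$ achieves the maximum; the numerical data for $d=2,\dots,9$ should pin down whether $\lambda_{\max} = 4/d$ comes from $\mu^{(1)}_0$, or $\mu^{(k)}_1$ for $k=1$, or some other identifiable family, and guide the final identity. Once the explicit expressions are in hand, verifying the conjectured value $\lambda_{\max} = 4/d$ and the strict negativity of the remaining eigenvalues is a routine polynomial comparison.
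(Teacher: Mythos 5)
The statement you are addressing is stated in the paper as a \emph{conjecture}: the authors give no proof at all, only numerical verification of the eigenvalues of the linearized flow matrix for $d=2,3,\dots,9$ via their \texttt{equilibrium\_type} routine. So there is no paper argument for your proposal to be measured against, and anything you prove here would go beyond the paper. Your symmetry-reduction plan is in itself sensible: the normalized flow is equivariant under $\mathrm{Aut}(Q^d)$ and the simple random walk is invariant, so the linearization on the full directed-edge space commutes with the translation action, the Fourier blocks $M_S$ of size $d$ exist, the residual $S_{|S|}\times S_{d-|S|}$-symmetry splits each block into two trivial lines plus two standard representations each occurring with multiplicity one, and Schur's lemma then forces real scalars on the multiplicity-one pieces; working on the constraint subspace $\sum_i q(x,i)=0$ (which the linearization preserves, since the flow preserves the Markov property) rather than on the paper's symmetry-breaking $E^{\mathrm{ess}}$ reduction is legitimate and yields the same spectrum.

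The genuine gap is that the proposal stops exactly where the conjecture begins. You never compute the scalars $\mu^{(k)}_0,\mu^{(k)}_1,\mu^{(k)}_2$, never exhibit a positive eigenvalue (instability), and never establish $\lambda_{\max}=4/d$; all of this is deferred as ``routine polynomial comparison'' to be guided by the numerics for $d\le 9$. Since the entire quantitative content of the statement lies in those closed-form expressions and their comparison over all $k$ and all $d\ge 2$, what you have is a plausible reduction scheme, not a proof. There are also small bookkeeping errors to repair before the scheme is airtight: for $k=0$ (and symmetrically $k=d$) there is only \emph{one} orbit of coordinates, hence one trivial line, and the Markov constraint annihilates it rather than collapsing two lines to one, so the multiplicity pattern $\binom{d}{k}\bigl(1,(k-1),(d-k-1)\bigr)$ must be adjusted at the extreme values of $k$ (otherwise the total count does not match $(d-1)2^d$); and the realness claim for the constrained trivial piece needs the explicit observation that the $2\times 2$ trivial isotypic block preserves the one-dimensional constrained line, not just that the line is one-dimensional. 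Until the blockwise eigenvalue formulas are actually derived from the $B$-coefficients with $p^s_{xy}=1/d$, $D_x=1$, no laziness and no triangles, and the maximum is identified as $4/d$ for every $d\ge 2$, the conjecture remains open under your approach just as it does in the paper.
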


\section{Implementation of the curvature flow and useful tools}

This section provides a short description of a program, written in Python, which allows users to carry out their own curvature flow experiments. This program is provided as an ancillary file and can be used in combination with the ``Graph Curvature Calculator'', 
which can be freely accessed at
\begin{center}
\blue{\url{http://www.mas.ncl.ac.uk/graph-curvature}} 
\end{center}
The Graph Curvature Calculator is a powerful but easy to use interactive Web tool to draw graphs and to compute various types of curvatures like Bakry-\'Emery curvature on its vertices or Ollivier Ricci curvature on its edges (for details see \cite{CKLLS-19}). This Web tool allows users to obtain the adjacency matrix of the graph under consideration which can then be used as input for the curvature flow code. 

\smallskip

The curvature flow code provides functions and routines which can be divided into six categories: 
\begin{enumerate}
    \item Functions related to graphs and their adjacency matrices
    \item Functions related to weighting schemes
    \item Functions testing combinatorial and weighted graphs
    \item Curvature flow computation routines
    \item Curvature computation routines
    \item Display routines
\end{enumerate}

Each category is discussed in one of the following subsections.

\subsection{Functions related to graphs and their adjacency matrices}

Recall that the topology of a given Markov chain $(G,P)$ is contained in the combinatorial graph $G=(V,E)$. This information is provided by the adjacency matrix $A = A_G$ of the graph. Firstly, we go through some useful functions performing operations with these adjacency matrices. Every graph generated by one of the functions below is returned as such an adjacency matrix, in particular as a NumPy array.  The names of these functions are as follows:

\begin{itemize}
    \item \blue{\texttt{rand\_adj\_mat(n, p, connected=False)},}
    \item \blue{\texttt{complete(n)},}
    \item \blue{\texttt{path(n)},}
    \item \blue{\texttt{cycle(n)},}
    \item \blue{\texttt{wedge\_sum(A, B, i, j)},}
    \item \blue{\texttt{bridge\_at(A, B, i, j)},}
    \item \blue{\texttt{hypercube(n)},}  
    \item \blue{\texttt{cart\_prod(A, B)},} 
    \item \blue{\texttt{onespheres(A)}.}
\end{itemize}

\blue{\texttt{rand\_adjmat(n, p)}} returns a random adjacency matrix with \texttt{n} vertices, where \texttt{p} is the probability that an edge exists between any two vertices. Therefore higher values of \texttt{p} usually lead to better connected graphs.
Choosing \texttt{connected=True} instead guarantees that the returned adjacency matrix provides a connected combinatorial graph.

The functions \blue{\texttt{complete(n)}}, \blue{\texttt{path(n)}} and \blue{\texttt{cycle(n)}} return the adjacency matrix of a complete graph with \texttt{n} vertices, a path of length $n$ and a cycle of length $n$, respectively.

\blue{\texttt{wedge\_sum(A, B, i, j)}} produces an adjacency matrix of a graph that is the wedge sum of two pointed graphs represented by the adajcency matrices \texttt{A} and \texttt{B} and the \texttt{i}th vertex of \texttt{A} and the \texttt{j}th vertex of \texttt{B}, that is, the new graph is obtained from the disjoint union of these two graphs by only identifying these two vertices and keeping all other edges and vertices disjoint.

There is also \blue{\texttt{bridge\_at(A, B, i, j)}}, which returns an adjacency matrix of the graph formed by connecting the graphs represented by \texttt{A} and \texttt{B} by a single edge at the \texttt{i}th vertex of \texttt{A} and the \texttt{j}th vertex of \texttt{B}.

The function \blue{\texttt{hypercube(n)}} returns the adjacency matrix of the \texttt{n}-dimensional hypercube.

The Cartesian product of two graphs represented by adjacency matrices \texttt{A} and \texttt{B} is returned by the function \blue{\texttt{cart\_prod(A, B)}}.
(The $n$-dimensional hypercube cand also be alternatively generated as the Cartesian product of $n$ complete graphs $K_2$.)

Finally, for a graph $G = (V,E)$ given by the adjacency matrix \texttt{A} and $V = \{0,1,\dots,n-1\}$, the function \blue{\texttt{onespheres(A)}} returns a list of lists whose $i$-th entry is a list of all neighbours of vertex $i \in V$, and whose $n$-th entry is a list of the combinatorial degrees of the vertices in $V$. This function is mainly used in execution of the other functions.

\subsection{Functions related to weighting schemes}

The weighting scheme of a Markov chain $(G,P)$ is provided via the weighted matrix $P = P_G$. The functions in this category are the following:

\goodbreak

\begin{itemize}
    \item \blue{\texttt{randomizer(A, threshold=0.001, laziness=False)},}
    \item \blue{\texttt{srw(A, laziness=False)},}
    \item \blue{\texttt{cart\_prod\_prob(P, Q, p, q)}.}
\end{itemize}

\blue{\texttt{randomizer(A)}} and \blue{\texttt{srw(A)}} are two useful functions that can be used to give weighted matrices from an adjacency matrix.

The function \blue{\texttt{randomizer(A)}} returns a weighting scheme for the graph $G$ with random, numerically non-degenerate transition rates, that is, no transition rate is chosen to be below the parameter 
\texttt{threshold}. Usually, the returned weighting schemes are without laziness, but choosing \texttt{laziness=True} returns weighting schemes with all vertices having laziness $\ge$ \texttt{threshold}.

\blue{\texttt{srw(A)}} returns the weighting scheme corresponding to the non-lazy simple random walk on the graph $G = (V,E)$ represented by the adjacency matrix \texttt{A}. Choosing \texttt{laziness=True} the transition rates of a vertex $v \in V$ with degree $n$ to its neighbours are chosen to be $\frac{1}{n+1}$ and its laziness is also chosen to be $\frac{1}{n+1}$. 

The function \blue{\texttt{cart\_prod\_prob(P, Q, p ,q)}} is a ``weighted'' analogue of the function \blue{\texttt{cart\_prod}} from the previous subsection for the Cartesian product of two weighting schemes \texttt{P}, \texttt{Q} with weights \texttt{p}, \texttt{q}, with $\texttt{p}+\texttt{q} = 1$. If \texttt{P} and \texttt{Q} are of size $n$ and $m$, respectively, this function returns the matrix $\texttt{p} \texttt{P} \otimes I_n + \texttt{q} I_m \otimes \texttt{Q}$ of size $nm$, where $I_n$ is the identity matrix of size $n$ and $A \otimes B$ is the Kronecker product of $A$ and $B$.    

\subsection{Functions testing combinational and weighted graphs}

For combinatorial graphs given by their adjacency matrices \texttt{A} and weighted graphs given additionally by their weighting scheme \texttt{P}, we have the following test functions:

\begin{itemize}
    \item \blue{\texttt{is\_connected(A)},}
    \item \blue{\texttt{is\_weakly\_connected(A, threshold=0.001)},}
    \item \blue{\texttt{is\_totally\_degenerate(A, P, threshold=0.001)},}
    \item \blue{\texttt{is\_markovian(P, norm\_tolerance=0.001)},}
    \item \blue{\texttt{is\_curvature\_sharp(A, P, norm\_tolerance=0.001, threshold=0.001)},}
    \item \blue{\texttt{equilibrium\_type(A, P, eigenvalues=False, jacobi\_matrix=False,\\ norm\_tolerance=0.001, threshold=0.001)}.}
\end{itemize}

The function \blue{\texttt{is\_connected(A)}} returns \texttt{True} if and only if the adjacency matrix \texttt{A} represents a connected graph. 

The function \blue{\texttt{is\_weakly\_connected(P)}} is a ``weighted'' analogue, which returns \texttt{True} if and only if the weighted matrix \texttt{P} represents a weakly connected graph. It does this by forming an adjacency matrix with a \texttt{1} in the $(\texttt{i}, \texttt{j})$th entry if and only if \texttt{P[i, j] > threshold} or \texttt{P[j, i] > threshold} and then testing for connectedness. 

Recall that a weighted graph $(G,P)$ is called \emph{numerically totally degenerate} if there are no two-sided edges with numerical non-zero transition rates in both directions and no one-sided edges with numerical non-zero transition rate, where we consider a transition rate $p_{xy}$ as numerically non-zero if and only if $p_{xy} \ge$ \texttt{threshold}. The function \blue{\texttt{is\_totally\_degenerate(A,P)}} tests this property. 

The function \blue{\texttt{is\_markovian(P)}} tests whether the entries of each of the columns of $P$ add up numerically to $1$ up to an error $\le \texttt{norm\_tolerance}$. 

Numerical curvature sharpness (up to an error $\le \texttt{threshold}$) of Markovian weighted graphs given by \texttt{(A,P)} is tested by \blue{\texttt{is\_curvature\_sharp(A,P)}}. If \texttt{(A,P)} fails to be Markovian (with respect to \texttt{norm\_tolerance}), this function returns \texttt{NONE} and gives notice to the user by an error message. 

For dynamical investigations of curvature flow equilibria, we have the function \blue{\texttt{equilibrium\_type(A,P)}} which always returns a list of length three. The function checks first whether \texttt{(A,P)} satisfies the Markovian property and is numerically curvature sharp. If this is not the case, it returns a list of three \texttt{NONE} values. Otherwise, the function investigates the real parts of the eigenvalues $\lambda_j$ of the linearized curvature flow matrix at the equilibrium \texttt{P}. The first entry of the return list is $-1, 0$ or $1$ depending on the maximum $\max_j {\rm{Re}}(\lambda_j)$.
If this maximum is $\ge \texttt{threshold}$, the return value is $1$ (for ``unstable'') and if this maximum is $\le - \texttt{threshold}$, the return value is $-1$ (for ``asymptotically stable''). Otherwise, the dynamical nature of the equilibrium cannot be numerically decided and the function returns the value $0$. The following two entries of the return list are usually \texttt{NONE} unless the user made the choices \texttt{eigenvalues=True} or \texttt{jacobi\_matrix=True}. In the first case, the second entry of the return list is a list of all eigenvalues of the linearized curvature flow matrix, and in the second case the third entry of the return is is the linearized curvature flow matrix itself. 

\subsection{Curvature flow computation routines}

At the heart of the program are the curvature flow routines solving the initial value ordinary differential equations \eqref{eq:laziness} and \eqref{eq:flowdiffeq}. The relevant routines are the following:

\begin{itemize}
    \item \blue{\texttt{curv\_flow(A, P, t\_max, dt=0.3, C=zeroes)},}
    \item \blue{\texttt{norm\_curv\_flow(A, P, t\_max, dt=0.3, stoch\_corr=True, norm\_tolerance=0.001)},}
    \item \blue{\texttt{norm\_curv\_flow\_lim(A, P, dt=0.3, stoch\_corr=True, norm\_tolerance=0.001,\\ lim\_tolerance=0.001, t\_lim=10000)}.}
\end{itemize}

The initial Markov chain $(G,P_0)$ with $G=(V,E)$ is entered by the adjacency matrix \texttt{A} describing the topology of the graph $G$ and the weighting scheme \texttt{P} containing the initial probability transitions $p_{xy}(0)$. 

The first routine \blue{\texttt{curv_flow(A,P)}} computes the non-normalized numerical curvature flow with coefficients $C_x(t)=0$ in \eqref{eq:flowdiffeq}. If users decide to choose other coefficient functions, they need to modify the input parameter \texttt{C=zeroes}. Note that \blue{\texttt{zeroes(A,P)}} is a function returning simply a list of zeroes of length $|V|$. 
Users can investigate modifications of the curvature flow by choosing their own coefficient functions with input values \texttt{(A,P)} and returning a list of length $|V|$ of real values. Using the discretization parameter \texttt{dt=0.3} for the discrete time steps starting at \texttt{t=0}, the curvature flow routine creates a list \blue{\texttt{P\_list}} of weighting schemes (represented by NumPy arrays of size $|V| \times |V|$) at each time increment using the Runge-Kutta algorithms RK4. There are two internal subroutines involved which we would like to mention briefly. \blue{\texttt{Pvecs\_to\_P}} translates a weighting scheme given by a list of lists (where each inner list contains the transition rates of the corresponding vertex) into the corresponding NumPy array. \blue{\texttt{Pvecs_prime}} computes, for a given weighting scheme \texttt{P}, the right hand side of the ordinary differential equations describing the curvature flow. The representation of this right hand side is again a list of lists, as described before. The computation stops just before the discrete time steps exceed the limit time \texttt{t\_max} and the routine returns \texttt{P\_list}. Note that in this general setting, transition rates can assume arbitrary values and even negative ones or diverge to infinity in finite time which may lead to system error messages. Users need to be aware of this possibility.

The normalized curvature flow, using the coefficients $C_x(t) = K_{P(t),\infty}^{d(x,\cdot)}(x)$ (see \eqref{eq:CxMark}), is numerically computed by the routine \blue{\texttt{norm\_curv\_flow(A,P)}}. This special flow is the main focus of this paper and could also be mimicked by choosing \texttt{C=K\_inf} in \blue{\texttt{curv\_flow}}. The function \blue{\texttt{K\_inf(A,P)}} returns a list of upper curvature bounds for all vertices of the Markovian weighted graph represented by \texttt{(A,P)} (see \cite[formula (66)]{CKLMPS-22} for an explicit expression of this function in terms of transition rates). During the numerical computations of subsequent time steps, the Markovian property of the corresponding weighting schemes may be slightly violated. If this violation exceeds the threshold \texttt{norm\_tolerance}, the routine prepares for a potential correction according to the Boolean variable \texttt{stoch_corr}. If \texttt{stoch\_corr=False}, the program stops with a message to the user as discussed in Example \ref{ex:random-graph}. Otherwise the program carries out the following automatic Markovian renormalization of the currently considered weighting scheme: while the diagonal entries (the laziness values) are unchanged, the off-diagonal entries of every row are rescaled by the same factor to guarantee that the resulting matrix becomes stochastic again. As before, this routine returns a list \texttt{P\_list} of consecutive weighting schemes up to the time limit
\texttt{t\_max}.

While the user needs to specify the time limit \texttt{t\_max} in the above two routines, the third routine
\blue{\texttt{norm\_curv\_flow\_lim(A,P)}} continues computing the normalized numerical curvature flow until a numerical flow limit is reached. This limit is determined by the parameter \texttt{lim\_tolerance}. The details for this numerical limit are explained in the introductory part of Section \ref{sec:curv-flow-ex}. 
This routine returns a list of length two: the limiting weighting scheme as a NumPy array followed by the numerical convergence time. Since it may happen that a normalized numerical flow does not converge at all (even though we are not aware of any such example), the parameter \texttt{t\_lim} provides an upper time limit beyond which the routine will not continue. The parameters \texttt{stoch\_corr} and \texttt{norm\_tolerance} play the same role as in the routine \blue{\texttt{norm\_curv\_flow}}.

\subsection{Curvature computation routines}

The functions in this section calculate Bakry-\'Emery curvatures and curvature upper bounds of graphs with given weighting schemes at all vertices.

\begin{itemize}
    \item \blue{\texttt{curvatures(A, P, N=inf, onesps=[], q=None)},}
    \item \blue{\texttt{calc\_curvatures(A, P\_list, N=inf, k=1)},}
    \item \blue{\texttt{K\_{inf}(A, P)},}
    \item \blue{\texttt{calc\_curv\_upper\_bound(A, P\_list, N=inf, k=1)}.}
\end{itemize}

The routine \blue{\texttt{curvatures(A,P)}} computes, for a weighted graph $(G,P)$ with $G=(V,E)$ and represented by \texttt{(A,P)}, the curvatures of all vertices for dimension \texttt{N} $=\infty$ and returns them as a list of length $|V|$. If users are interested in curvatures for other dimensions, they need to change the parameter \texttt{N=inf}. There are two other inputs which can speed up the curvature calculations: if the number \texttt{q} of vertices in $V$ is given, it can be specified to avoid its repeated recalculation, for example during a curvature flow process. Similarly, if \blue{\texttt{onespheres(A)}} has already be calculated earlier, this information can be communicated to the routine via the input variable \texttt{onesps}.

After a curvature flow computation with corresponding list \texttt{P\_list} of consecutive weighting schemes, the routine \blue{\texttt{calc\_curvatures(A,P\_list)}} computes the corresponding evolution of vertex curvatures by calling \blue{\texttt{curvatures(A,P\_list[j])}} and returns it as a list of lists. Here the $j$-th inner list contains the curvature evolution of the $j$-th vertex of the graph $G=(V,E)$ represented by \texttt{A}. The dimension parameter \texttt{N=inf} plays the same role as before. \blue{\texttt{calc\_curvatures}} computes curvatures only of each \texttt{k}-th weighting scheme provided by \texttt{P\_list}. Where appropriate, this can help to reduce computation time.
 
The routine \blue{\texttt{K\_inf(A,P)}} was already discussed in the previous subsection and provides a list of upper curvature bounds for all vertices of the weighted graph $(G,P)$ represented by \texttt{(A,P)}. 

\blue{\texttt{calc\_curv\_upper\_bound}} is completely analogous to \blue{\texttt{calc\_curvatures}},  but it calls \blue{\texttt{K\_inf}} instead of \blue{\texttt{curvatures}}.
 
\subsection{Display routines}

The main display routines for users are the evolution of curvatures at various vertices during the curvature flow, the evolution of transition rates of edges emanating from vertices and the display of individual weighted graphs with vertices arranged in a circle.  The relevant routines are the following:

\begin{itemize}
    \item \blue{\texttt{display\_curvatures(curv, dt=0.3, is\_Markovian=True, N=inf, k=1,\\ curv\_bound=[], vertex\_list=[])},}
    \item \blue{\texttt{display\_trans\_rates(A, P\_list, dt=0.3, vertex\_list=[])},}
    \item \blue{\texttt{display\_weighted\_graph(A, P, title=None, threshold=10**(-3), \\display\_options=[10, True, 2, []], laziness=False)}.} 
\end{itemize}

Given the evolution of vertex curvatures during a curvature flow process via a list of lists, where the $j$-th inner list is the curvature evolution of the $j$-th vertex, \blue{\texttt{display_curvatures(curv)}} displays the curvature evolution for each consecutive vertex separately, as illustated for example, in Figure \ref{fig:random-graph-curvatures}. If this information should be only given for specific vertices, this can be specified by the input parameter \texttt{vertex\_list}. The time step \texttt{dt=0.3} and the value of \texttt{k} together determine the labelling of the horizontal time axis. For the role of \texttt{k} we refer readers to our explanation about the routine \blue{\texttt{calc_curvatures}}. Upper curvature bounds can be inserted into the displays by the input parameter \texttt{curv\_bound}, which needs to be given in the same format as the vertex curvatures. Constant lower and upper curvature bounds $-1$ and $2$ are plotted alongside if the Boolean \texttt{is_Markovian} is chosen to be \texttt{True} and if the dimension parameter \texttt{N} is $\ge 2$. These bounds appear, for example, in the illustrations given in Figure \ref{fig:random-graph-curvatures}.

Given the evolution of weighting schemes during a curvature flow process on a graph with adjacency matrix \texttt{A} by a list \texttt{P\_list} of NumPy arrays, \blue{\texttt{display\_trans\_rates(A,P\_list)}} displayes the evolution of transition rates of emanating edges for each consecutive vertex separately, as illustrated for example, in Figure \ref{fig:random-graph-trans-rates}. The input parameters \texttt{dt} and \texttt{vertex\_list} play the same role as in the previous routine.

Finally, there is the routine  \blue{\texttt{display\_graph(A,P)}} with input parameters \texttt{A} and \texttt{P} representing a weighted graph $(G,P)$. This routine produces a MatPlotLib plot of this weighted graph, with the vertices arranged counter-clockwise in a circle. The plot uses the following convention to illustrate different types of edges:
\begin{itemize}
    \item \green{green, solid lines} represent numerically non-degenerate edges, that is $\{x, y\} \in E$ with both $p_{xy}, p_{yx} \ge$ \texttt{threshold},
    \item \red{red, dashed lines} with an arrow represent numerically degenerate edges, that is $\{x,y\} \in E$ with exactly one of $p_{xy}$ and $p_{yx}$ strictly less than \texttt{threshold}.
    \item black, dotted lines represent edges with numerically vanishing transition rates in both directions, that is $\{x,y\} \in E$ with both $p_{xy}, p_{yx}$ strictly less than \texttt{threshold}.
\end{itemize}

Users can add a title into the display by specifying the input parameter \texttt{title}. For Markovian weighted graphs with non-vanishing laziness, the option \texttt{laziness=True} labels each vertex with its corresponding laziness.  

It remains to discuss the input parameter \texttt{display\_options} of the \blue{\texttt{display\_graph}} routine. This parameter is a list of four entries. The first entry determines the size of the plot. Usually, the transition rates are printed above the edges, but if the second entry is chosen to be \texttt{False}, this information about the transition rates is omitted. Otherwise, the transition rates are given to a number of decimal places determined by the third entry. The default positions of these transition rates are $\frac{1}{6}$ of the way along the edges, with the number closest to the vertex $x$ in the edge $(x, y)$ being $p_{xy}$, but this can be altered manually to avoid overlapping by specification in the fourth entry of \texttt{display\_options}. For example, if one wishes the $p_{45}$ label to be moved to a position $\frac{1}{4}$ of the way from vertex $4$ to vertex $5$ and the $p_{62}$ label to be moved to a position $\frac{1}{5}$ of the way from vertex $6$ to vertex $2$, 
this fourth entry should be chosen to be 
\texttt{[[4, 5, 1/4, 1/6], [6, 2, 1/5, 1/6]]}. 

This completes the description of the functions and routines in the accompanying Python program to this article.

\medskip

{\bf{Acknowledgement:}} Shiping Liu is supported by the National Key R and D Program of China 2020YFA0713100 and the National Natural Science Foundation of China (No. 12031017). We like to thank the London Mathematical Society for their support of Ben Snodgrass via the Undergraduate Research Bursary URB-2021-02, during which the curvature flow was implemented and which lead to many of the research results presented in this paper. David Cushing is supported by the Leverhulme Trust Research Project Grant number RPG-2021-080.

{\footnotesize
\bibliographystyle{amsalpha}
\bibliography{bib}}

\end{document}